\documentclass{amsart}
\usepackage{amssymb,amsmath,amsthm}
\usepackage{units}
\usepackage{graphicx}
\usepackage{tikz}
\usetikzlibrary{arrows,calc,positioning}
\usepackage{relsize}
\usepackage[noadjust]{cite}
\usepackage{mhsetup}
\usepackage{mathtools}

\numberwithin{equation}{section}
\numberwithin{figure}{section}

\newtheorem{thm}{Theorem}[section]
\newtheorem{lem}[thm]{Lemma}
\newtheorem{prop}[thm]{Proposition}
\newtheorem{coro}[thm]{Corollary}
\newtheorem{coroalph}{Corollary}
  
\newtheorem*{mainthm}{Main Theorem}

\theoremstyle{definition}
\newtheorem{defn}[thm]{Definition}

\theoremstyle{remark}
\newtheorem{rmk}[thm]{Remark}
\newtheorem{eg}[thm]{Example}
\newtheorem{ceg}[thm]{Counterexample}
\newtheorem*{notation}{Notation}
\newtheorem*{note}{Note}
\newtheorem*{aside}{Aside}

\newcommand{\calQ}{\mathcal{Q}}
\newcommand{\calS}{\mathcal{S}}
\newcommand{\bbF}{\mathbb{F}}
\newcommand{\bbH}{\mathbb{H}}
\newcommand{\bbQ}{\mathbb{Q}}
\newcommand{\bbR}{\mathbb{R}}
\newcommand{\bbZ}{\mathbb{Z}}
\newcommand{\id}{\mathrm{id}}
\newcommand{\Emb}{\mathrm{Emb}}
\newcommand{\hocofib}{\mathrm{hocofib}}
\newcommand{\inj}{\mathrm{inj}}
\newcommand{\Cob}{\mathrm{Cob}}
\newcommand{\hconn}{\ensuremath{h\mathrm{conn}}}
\newcommand{\mbar}{\ensuremath{{\,\,\overline{\!\! M\!}\,}}}
\newcommand{\bbar}{\ensuremath{{\,\,\overline{\!\! B\!}\,}}}
\newcommand{\sbar}{\ensuremath{{\,\overline{\! S}}}}
\newcommand{\minus}[2]{#1\!\setminus\! #2}
\newcommand{\geomr}[1]{\lVert #1 \rVert}
\newcommand{\geomrp}[1]{\lVert #1 \rVert_{\star}}
\newcommand{\geomrsk}[2]{\lVert #1 \rVert^{#2}}
\newcommand{\geomrpsk}[2]{\lVert #1 \rVert_{\star}^{#2}}
\newcommand{\rightquotient}{\mathlarger{\mathlarger{/}}}
\newcommand{\tvect}[2]{\ensuremath{\bigl(\negthinspace\begin{smallmatrix}#1\\#2\end{smallmatrix}\bigr)}}
\newcommand{\pair}[2]{\ensuremath{\begin{smallmatrix}#1\\#2\end{smallmatrix}}}
\newcommand{\tikzmono}{%
\tikz[baseline=-0.6ex,>=stealth']{\draw[>->] (0,0)--(2em,0); \useasboundingbox (2.5em,0);}%
}
\newcommand{\incl}[3][right]%
{%
\draw[<-,>=#1 hook] #2 to ($ #2!0.5!#3 $);
\draw[->,>=stealth'] ($ #2!0.5!#3 $) to #3;%
}
\newcommand{\inclusion}[5][right]%
{%
\draw[<-,>=#1 hook] #4 to ($ #4!0.5!#5 $) node[#2,font=\small]{#3};
\draw[->,>=stealth'] ($ #4!0.5!#5 $) to #5;%
}


\begin{document}
\title{Homological stability for oriented configuration spaces}
\author{Martin Palmer}
\date{24\textsuperscript{th} November 2011}
\address{Mathematical Institute\\University of Oxford\\24--29 St Giles'\\Oxford\\OX1 3LB\\UK}
\email{palmer@maths.ox.ac.uk}
\subjclass[2010]{Primary 55R80; Secondary 57N65, 20J06, 57M07}
\keywords{Configuration spaces, homology stability, alternating groups}

\begin{abstract}
\noindent In this paper we prove (integral) homological stability for the sequences of spaces $C_n^+ (M,X)$. These are the spaces of configurations of $n$ points in a connected manifold of dimension at least $2$ which `admits a boundary', with labels in a path-connected space $X$, and with an \emph{orientation} --- an ordering of the points up to even permutations.

They are double covers of the unordered configuration spaces $C_n (M,X)$, and indeed to prove our result we adapt methods from a paper of Randal-Williams, which proves homological stability in the unordered case. Interestingly the oriented configuration spaces stabilise more slowly than the unordered ones: the stability slope we obtain is $\frac13$, compared to $\frac12$ in the unordered case (and these are the best possible slopes in their respective cases).

This result can also be interpreted as homological stability for the unordered configuration spaces with certain twisted $\mathbb{Z} \oplus \mathbb{Z}$-coefficients.
\end{abstract}

\maketitle

\section{Introduction}\label{sec:introduction}

\noindent For a manifold $M$ and space $X$, we define the unordered configuration space to be
\begin{equation*}
C_n (M,X)\; =\; \Emb ([n],M) \times_{\Sigma_n} X^n ,
\end{equation*}
where $[n]$ is the discrete space $\lbrace 1, ..., n \rbrace$. This is the space of configurations of $n$ distinct points (or `particles') in $M$, each carrying a label (or `parameter') in $X$. (When $X=pt$ we call $C_n (M,pt) = \Emb ([n],M) / \Sigma_n$ an \emph{unlabelled} configuration space.) The \emph{oriented} configuration space is defined to be the double cover
\begin{equation*}
C_n^+ (M,X)\; =\; \Emb ([n],M) \times_{A_n} X^n
\end{equation*}
of this space, so oriented configurations have an additional global parameter: an ordering of the $n$ points up to even permutations. If $M$ `admits a boundary' there is a natural map $s$ which adds a new point to the configuration near this boundary (see \S \ref{subsec:stabilisation:maps} for precise definitions).
\begin{mainthm}
If $M$ is the interior of a connected manifold-with-boundary of dimension at least $2$, and $X$ is any path-connected space, then
\begin{equation*}
s\colon C_n^+ (M,X) \longrightarrow C_{n+1}^+ (M,X)
\end{equation*}
is an isomorphism on homology up to degree $\frac{n-5}{3}$, and a surjection up to degree $\frac{n-2}{3}$.
\end{mainthm}
\begin{rmk}
If either $M$ or $X$ is not path-connected, then the number of path-components of $C_n^+ (M,X)$ grows unboundedly as $n\to\infty$, so homological stability fails even in degree zero. We also exclude the case of $1$-dimensional manifolds, where homological stability also fails in general: the space $C_n^+ (\bbR ,X)$ deformation retracts onto $X^n \sqcup X^n$.
\end{rmk}
\noindent When such a statement holds for a range of degrees $*\leq \alpha n + c$, we say that the \emph{stability slope} is $\alpha$; so in this case we have homological stability for oriented configuration spaces with a stability slope of $\frac13$.

The underlying method we use for the proof is that of taking `resolutions of moduli spaces', as introduced and studied by Randal-Williams in \cite{Randal:Williams2010}. This method involves considering a semi-simplicial space augmented by the space of interest, where in the `standard' strategy for proving homological stability one would consider a simplicial complex acted on by the group of interest. The method was applied in \cite{Randal:Williams2011} to prove the analogous theorem for unordered configuration spaces, which has a stability slope of $\frac12$. Our method is a modified version of that of \cite{Randal:Williams2011}; however some important complications arise in going from the unordered to the oriented case, which are outlined in \S \ref{sec:sketch:of:the:proof} below. In particular \S \ref{subsec:sketch:of:the:proof:oriented} explains why the stability slope goes from $\frac12$ to $\frac13$ when we apply the techniques of \cite{Randal:Williams2011} to the oriented case.
\begin{rmk}\label{rmk:best:possible:stability:slope}
We note that the stability slope of $\frac13$ is the best possible for oriented configuration spaces (for $\bbZ$-coefficients), as can be seen by the calculations in \cite{Hausmann1978} or \cite{Guest:et:al1996} (see \S \ref{subsec:counterexamples:to:injectivity}).
\end{rmk}

\subsection{Background}

A brief history of homology-stability theorems for unordered and oriented configuration spaces is as follows.

\subsubsection*{Unordered configuration spaces}

Two special cases which were proved early on are homology-stability for the sequences of symmetric and braid groups, corresponding to $M=\bbR^\infty , \bbR^2$ respectively (and $X=pt$, i.e.\ unlabelled). The result for the symmetric groups is due to Nakaoka \cite{Nakaoka1960}, and the result for the braid groups was proved later by Arnol'd \cite{Arnold1970}. The stability slope obtained in each case was $\frac12$. Using more indirect methods, Segal \cite{Segal1973} proved homology-stability for all Euclidean spaces $M=\bbR^d$ and arbitrary path-connected label-spaces $X$, but this time without an explicit range of stability (see also \cite[\S 3]{Lehrer:Segal2001}). Generalising in a different direction, in \cite{McDuff1975} McDuff proved homology-stability for arbitrary manifolds $M$ (assuming connectivity and that $M$ admits a boundary) but without labels ($X=pt$), also without an explicit stability range. Later, Segal \cite{Segal1979} showed by a different method that in this case we do in fact have a stability slope of $\frac12$, as with the symmetric and braid groups.

The most general result for unordered configuration spaces is due to Randal-Williams \cite[Theorem A]{Randal:Williams2011}\footnote{This is also recalled as Theorem \ref{thm:the:unordered:version} below.}, which allows arbitrary manifolds \emph{and} label-spaces: specifically, he proves homology-stability for $C_n (M,X)$, with a slope of $\frac12$, under the same assumptions on $M$ and $X$ as stated in the Main Theorem above.

A recent result of Church \cite{Church2011} concerning representation stability shows, as a corollary of his main theorem, that rational homology-stability holds (with slope $1$) for unordered, unlabelled configuration spaces where $M$ is allowed to be a \emph{closed} manifold. In this case $M$ does not admit a boundary, and there is no natural map $s$ adding a point to the configuration, but nevertheless stability still holds rationally. The isomorphism in this case is induced by a transfer map which \emph{removes} a point from the configuration. This result is also proved directly in Theorems B and C of \cite{Randal:Williams2011} (although here the increased stability slope of $1$ is only obtained when the manifold has dimension at least $3$).

\subsubsection*{Oriented configuration spaces}

Homology-stability for oriented configuration spaces $C_n^+ (M,X)$ has been proved in two special cases: For the alternating groups ($M=\bbR^\infty$, $X=pt$) it can be quickly deduced from a result of Hausmann \cite[page 130]{Hausmann1978}, with a stability slope of $\frac13$, which can be improved to $\frac12$ by taking $\bbZ [\frac13]$-coefficients. For $M$ a compact connected Riemann surface minus a non-empty finite set of points (and $X=pt$), Guest--Kozlowsky--Yamaguchi \cite{Guest:et:al1996} proved homology-stability with a slope of $\frac13$, which again is improved to $\frac12$ by taking $\bbZ [\frac13]$-coefficients. The proofs of \cite{Hausmann1978} and \cite{Guest:et:al1996} involve explicit calculations, using methods which are specific to their respective cases, so do not generalise naturally to all manifolds. The main result of the present paper answers a question in \cite{Guest:et:al1996}, which asks whether their result generalises to arbitrary open manifolds.

In general, for unlabelled oriented configuration spaces, \emph{rational} homology-stability follows from the result of Church mentioned above. It corresponds to stability for the multiplicities of the trivial and alternating representations of $\Sigma_n$ in the rational cohomology of the ordered configuration space $\widetilde{C}_n(M)$. Representation stability for $\widetilde{C}_n(M)$ \cite[Theorem 1]{Church2011} includes multiplicity stability for the trivial representation, and indirectly shows that the multiplicity of the alternating representation is eventually zero (c.f.\ discussion after the statement of Theorem 1 in \cite{Church2011}).

\subsection{Remarks}

\begin{rmk}
The Serre spectral sequence for the fibration $\bbZ_2 \to C_n^+ (M,X) \to C_n (M,X)$ implies that
\begin{equation*}
H_* (C_n^+ (M,X);\bbZ ) \; \cong \; H_* (C_n (M,X);\underline{\bbZ \oplus \bbZ}),
\end{equation*}
where the $\underline{\bbZ \oplus \bbZ}$-coefficients on the right are twisted by the action of $\pi_1 C_n (M,X)$ on $\bbZ \oplus \bbZ$ by first projecting to $\bbZ_2$ (corresponding to the index-$2$ subgroup $\pi_1 C_n^+ (M,X)$) and then letting the generator of $\bbZ_2$ act by swapping the two $\bbZ$-summands. So the Main Theorem above is also twisted homological stability for unordered configuration spaces with this sequence of $\pi_1 C_n (M,X)$-modules. We note that in the $M = \bbR^\infty$, $X = pt$ case this sequence of $\Sigma_n$-modules does \emph{not} extend to a (functorial) coefficient system in the sense of \cite{Betley2002}.
\end{rmk}
\begin{rmk}\label{rmk:global:data}
The orientation of a configuration in $C_n^+ (M,X)$ is an example of a \emph{global} parameter on configuration spaces (the labels in $X$ are local parameters); in a sense it is the simplest possible one. It is interesting that homological stability still holds for these spaces, since the `scanning' method of Segal and McDuff does not work in this case: In this method one first uses a `transfer-type' argument to show that, on homology of any degree, the adding-a-point maps $s$ are inclusions of direct summands; then one shows that the colimit of this sequence of maps is finitely generated (c.f.\ proof of Theorem 4.5 in \cite{McDuff1975}). However, for oriented configuration spaces the maps $s$ are \emph{not} always injective on homology (see \S \ref{subsec:counterexamples:to:injectivity} for counterexamples). Arguably, it is the existence of global data in $C_n^+ (M,X)$ which causes this injectivity-on-homology to fail.
\end{rmk}
\begin{rmk}
A nice orientability property of oriented configuration spaces is the following: if $M$ and $X$ are both orientable manifolds, then $C_n^+ (M,X)$ is again an orientable manifold. This is simpler than in the unordered case, where $C_n (M,X)$ is \emph{nonorientable} (exactly) if either
\begin{itemize}
\item $\mathrm{dim} (M)\geq 2$ and $\mathrm{dim} (M) + \mathrm{dim} (X)$ is odd, or
\item $M = S^1$ and $\mathrm{dim} (X)$ and $n$ are even
\end{itemize}
(c.f.\ remark following Proposition A.1 in \cite{Segal1979}).
\end{rmk}

\subsection{Corollaries}\label{subsec:corollaries}

The Main Theorem has corollaries for homological stability of certain sequences of groups:
\begin{coroalph}\label{coro:corollary:A}
If $G$ is any discrete group and $S$ is the interior of a connected surface-with-boundary \sbar, then the natural maps
\begin{equation*}
G\wr A\beta_n^S \longrightarrow G\wr A\beta_{n+1}^S \quad \text{and} \quad G\wr A_n \longrightarrow G\wr A_{n+1}
\end{equation*}
are isomorphisms on homology up to degree $\frac{n-5}{3}$ and surjections up to degree $\frac{n-2}{3}$.
\end{coroalph}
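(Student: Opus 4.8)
The plan is to deduce Corollary~\ref{coro:corollary:A} from the Main Theorem by identifying the relevant wreath-product homology with the homology of an oriented configuration space on a suitable manifold. First I would recall the standard identification in the unordered case: for a connected surface $S$ of dimension $2$, the unlabelled configuration space $C_n(S,pt)$ is a model for $B\beta_n^S$, where $\beta_n^S = \pi_1 C_n(S,pt)$ is the ``surface braid group'' on $n$ strands, and more generally $C_n(S,BG)$ is a model for $B(G \wr \beta_n^S)$; taking $S = \bbR^2$ recovers the ordinary braid group $\beta_n$, and taking $S = \bbR^\infty$ (or stabilising the dimension) recovers $B(G \wr \Sigma_n)$. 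The point is that $\Emb([n],S)$ is aspherical with $\pi_1$ equal to the pure surface braid group, the $\Sigma_n$-action is free, and the fibre $X^n = (BG)^n$ contributes the $G^n$ factor with the correct $\Sigma_n$-equivariance, so that $C_n(S,BG) \simeq B(G\wr\beta_n^S)$ by a Borel-construction / fibration-sequence argument.

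Next I would observe that passing to the oriented configuration space $C_n^+(S,BG)$ replaces $\Sigma_n$ by $A_n$ throughout: $C_n^+(S,BG) = \Emb([n],S)\times_{A_n}(BG)^n$ is a $K(\pi,1)$ with fundamental group the index-two subgroup $G\wr A\beta_n^S$ (the preimage of $A_n$ under $G\wr\beta_n^S \to \Sigma_n$), and similarly with $S$ replaced by $\bbR^\infty$ we get $B(G\wr A_n)$. Here one should check that $\Emb([n],S)$ is still aspherical (it is, being a covering of or homotopy equivalent to the same space, independent of the group action) and that the $A_n$-action remains free, so $C_n^+(S,BG) \simeq B(G\wr A\beta_n^S)$; the case of the plain alternating groups $G\wr A_n$ is the limiting/high-dimensional case $S$ replaced by $\bbR^\infty$, which can be handled either directly or by noting that $C_n^+(\bbR^d,BG)$ for $d$ large enough already realises $B(G\wr A_n)$ on homology in the relevant range, or by an explicit $\bbR^\infty$ argument. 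Finally I would check that the adding-a-point stabilisation map $s$ corresponds under these identifications to the evident inclusions $G\wr A\beta_n^S \hookrightarrow G\wr A\beta_{n+1}^S$ and $G\wr A_n \hookrightarrow G\wr A_{n+1}$ (adding a new strand/point near the boundary, with trivial $G$-label), using the precise description of $s$ from \S\ref{subsec:stabilisation:maps}. Then the Main Theorem applied to $M = S$ (or $M = \bbR^\infty$, viewed as an increasing union of $\bbR^d$'s) and $X = BG$, which is path-connected, gives exactly the claimed stability ranges $\frac{n-5}{3}$ and $\frac{n-2}{3}$.

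The main obstacle I anticipate is the bookkeeping around the case $G\wr A_n$, i.e.\ making the ``$M = \bbR^\infty$'' substitution rigorous, since $\bbR^\infty$ is not a finite-dimensional manifold-with-boundary and so the Main Theorem does not literally apply to it. The clean fix is to note that $C_n^+(\bbR^\infty,BG) = \mathrm{colim}_d\, C_n^+(\bbR^d,BG)$ and that for each fixed $n$ the inclusions $C_n^+(\bbR^d,BG)\to C_n^+(\bbR^{d+1},BG)$ are highly connected (as $d\to\infty$), so on homology in any fixed degree $C_n^+(\bbR^d,BG)$ already computes $H_*(B(G\wr A_n))$ once $d$ is large; since the stability range in the Main Theorem is uniform in $M$ (it depends only on $n$, not on $\dim M$), one gets the stated range for $G\wr A_n$ as well. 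Alternatively, one can simply cite that $B(G\wr A_n) \simeq C_n^+(\bbR^\infty, BG)$ and that stability for $C_n^+(\bbR^d,BG)$ with slope $\frac13$ for all $d \geq 2$, together with stabilising in $d$, yields the group-level statement. Everything else is a matter of assembling standard facts about configuration spaces as classifying spaces and checking compatibility of the stabilisation maps, which I would state but not belabour.
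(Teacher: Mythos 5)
Your proposal is correct and follows essentially the same route as the paper: identify $C_n^+(S,BG)$ as a $K(\pi,1)$ for $G\wr A\beta_n^S$ via asphericity (the paper does this through the Fadell--Neuwirth fibration sequences for $\widetilde{C}_n(S,BG)$) and a split short exact sequence on $\pi_1$, match the stabilisation map with the group inclusion, and handle $G\wr A_n$ by passing to the colimit $C_n^+(\bbR^\infty,BG)=\mathrm{colim}_d\, C_n^+(\bbR^d,BG)$, using that the stability range of the Main Theorem is uniform in $M$. The paper makes exactly the same observations, including the remark that the dimension-independence of the stability slope is what makes the $\bbR^\infty$ colimit argument legitimate.
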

\noindent Here $\beta_n^S$ is the braid groups on $n$ strands on the surface $S$, and $A\beta_n^S$ is its alternating subgroup, consisting of those braids whose induced permutation is even. Of course, these corollaries exactly parallel those of the unordered version of the Main Theorem, which concern $G\wr \Sigma_n$ and $G\wr \beta_n^S$. Homological stability for $A_n$ and for $A\beta_n^S$ with \sbar\ compact and orientable were known previously by \cite[Proposition A]{Hausmann1978} (via the relative Hurewicz theorem) and \cite{Guest:et:al1996} respectively. The above corollaries are new (as far as the author is aware) for $G$ nontrivial or for \sbar\ nonorientable or noncompact.

Via the Universal Coefficient Theorem and the Atiyah--Hirzebruch spectral sequence, homological stability for (trivial) $\bbZ$-coefficients implies homological stability for any connective homology theory:
\begin{coroalph}\label{coro:corollary:B}
Under the hypotheses of the Main Theorem, if $h_*$ is a connective homology theory with connectivity $c$, the map
\begin{equation*}
s\colon C_n^+ (M,X) \longrightarrow C_{n+1}^+ (M,X)
\end{equation*}
is an isomorphism on $h_*$ for $*\leq \frac{n-5}{3} + c$ and surjective on $h_*$ for $*\leq \frac{n-2}{3} + c$.
\end{coroalph}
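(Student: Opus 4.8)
The plan is to deduce Corollary \ref{coro:corollary:B} directly from the Main Theorem by a standard spectral-sequence argument, so no new geometric input about configuration spaces is required. Write $s\colon C_n^+(M,X) \to C_{n+1}^+(M,X)$ for the stabilisation map and let $C(s)$ denote its mapping cone (or, equivalently, work with the pair and relative groups). By the Main Theorem, $s_*\colon H_*(C_n^+(M,X);\bbZ) \to H_*(C_{n+1}^+(M,X);\bbZ)$ is an isomorphism for $*\leq \frac{n-5}{3}$ and a surjection for $*\leq \frac{n-2}{3}$; in terms of the cone this says $\widetilde{H}_*(C(s);\bbZ) = 0$ for $*\leq \frac{n-2}{3}$, since a class in degree $d$ of the reduced homology of the cone is detected exactly by failure of surjectivity in degree $d-1$ together with failure of injectivity in degree $d$ of $s_*$.

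Next I would invoke the Atiyah--Hirzebruch spectral sequence for the cone $C(s)$ with coefficients in the connective homology theory $h_*$:
\begin{equation*}
E^2_{p,q} = H_p\bigl(C(s);\, h_q(\mathrm{pt})\bigr) \;\Longrightarrow\; h_{p+q}(C(s)).
\end{equation*}
Connectivity $c$ means $h_q(\mathrm{pt}) = 0$ for $q < c$, so the $E^2$-page is concentrated in the region $q \geq c$. Combined with the vanishing $\widetilde{H}_p(C(s);\bbZ) = 0$ for $p \leq \frac{n-2}{3}$ --- which, by the Universal Coefficient Theorem (the coefficient group $h_q(\mathrm{pt})$ being arbitrary abelian), also gives $\widetilde{H}_p(C(s); h_q(\mathrm{pt})) = 0$ in the same range for every $q$ --- the only possibly-nonzero entries $E^2_{p,q}$ with $p+q$ small require $p \geq \frac{n-2}{3} + 1$ and $q \geq c$. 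Hence $\widetilde{h}_k(C(s)) = 0$ whenever $k \leq \frac{n-2}{3} + c$: every entry contributing to total degree $\leq \frac{n-2}{3}+c$ vanishes on $E^2$, hence on $E^\infty$, and the filtration is finite in each total degree so the abutment vanishes too.

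Finally I would translate the vanishing of $\widetilde{h}_*(C(s))$ back into a statement about $s$. The cofibre sequence $C_n^+(M,X) \xrightarrow{s} C_{n+1}^+(M,X) \to C(s)$ induces a long exact sequence in $h_*$ (every homology theory has Mayer--Vietoris / cofibre long exact sequences), and $\widetilde{h}_k(C(s)) = 0$ for $k \leq \frac{n-2}{3}+c$ forces $s_*\colon h_k \to h_k$ to be an isomorphism for $k \leq \frac{n-2}{3}+c - 1 = \frac{n-5}{3}+c$ and a surjection for $k \leq \frac{n-2}{3}+c$, which is exactly the claim. The argument is entirely formal; the only mild point requiring care is bookkeeping the two ranges (iso versus surjection) consistently through the degree shift in the cofibre sequence, so that the $\frac{n-5}{3}$ and $\frac{n-2}{3}$ bounds come out correctly --- but this is the same bookkeeping already present in the Main Theorem and presents no real obstacle. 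Note also that exactly the same reasoning, using the Universal Coefficient Theorem spectral sequence in place of Atiyah--Hirzebruch, gives homological stability with arbitrary (untwisted) coefficients, as implicitly used above.
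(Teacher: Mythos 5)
Your argument is correct and is essentially the paper's own proof: the paper isolates exactly this reasoning as a lemma (vanishing of $\widetilde{H}_*(Cf;\bbZ)$ up to degree $k$ implies vanishing of $\widetilde{h}_*(Cf)$ up to degree $k+c$ via the reduced Atiyah--Hirzebruch spectral sequence and the Universal Coefficient Theorem) and then applies the cofibre long exact sequence. The degree bookkeeping in your final step matches the stated ranges.
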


\subsection*{Layout of the paper}

In section \ref{sec:definitions:and:set:up} we define all the spaces, semi-simplicial spaces, and maps which will be used later. Section \ref{sec:sketch:of:the:proof} contains an outline of the proof, and explains the differences between the method in the unordered and the oriented cases. The proof itself is contained in sections \ref{sec:two:spectral:sequences}, \ref{sec:the:connectivity:of:the:unpuncturing:map}, and \ref{sec:proof:of:the:main:theorem}; section \ref{sec:two:spectral:sequences} produces some spectral sequences and proves some facts about them, section \ref{sec:the:connectivity:of:the:unpuncturing:map} uses excision to relate the connectivity of two different maps between configuration spaces, and section \ref{sec:proof:of:the:main:theorem} brings this together to prove the Main Theorem. Section \ref{sec:corollaries} establishes the corollaries stated above, and section \ref{sec:failure:of:injectivity} contains a note on the (failure of) injectivity of stabilisation maps on homology.

Some technical constructions have been deferred to the appendices, to avoid lengthy digressions during the proof of the Main Theorem. \ref{appendix:proof:of:the:factorisation:lemma} constructs a factorisation on homology for maps between mapping cones, under fairly general conditions, and \ref{appendix:spectral:sequences:from:dspaces} recalls the details of the construction of various spectral sequences arising from semi-simplicial spaces.

\subsection*{Acknowledgements}

I would firstly like to thank my supervisor, Ulrike Tillmann, for her invaluable advice and support during my studies so far. I would also like to thank Oscar Randal-Williams for numerous very helpful discussions and suggestions, in particular the idea leading to the `factorisation lemma' used in the second half of the proof. I am also grateful to Alejandro Adem for pointing out the reference \cite{Hausmann1978}, and to Graeme Segal and Christopher Douglas for constructive comments on an earlier version of this paper, which formed my `Transfer of Status' dissertation \cite{TransferReport} (part of my PhD thesis).

\section{Definitions and set-up}\label{sec:definitions:and:set:up}

Firstly we mention two general notational conventions: A connected manifold $M$ with $k$ points removed will be denoted $M_k$ (since it is connected, its homeomorphism type is independent of \emph{which} $k$ points are removed). The symbol \tikzmono will be reserved for the canonical inclusion of the codomain of a map into its mapping cone:
\begin{center}
\begin{tikzpicture}
[x=4em,>=stealth']
\node (Y) at (0,0) {$Y$};
\node (Z) at (1,0) {$Z$};
\node (Cf) at (2,0) {$Cf.$};
\draw[->] (Y) to node[above=-2pt,font=\footnotesize]{$f$} (Z);
\draw[>-] (Z) to (1.5,0);
\draw[->] (1.5,0) to (Cf);
\end{tikzpicture}
\end{center}

\subsection{Configuration spaces}\label{subsec:configuration:spaces}

\begin{defn}
For a manifold $M$ and space $X$, we define the \emph{ordered configuration space} to be
\begin{equation*}
\widetilde{C}_n (M,X) \coloneqq \Emb ([n],M) \times X^n ,
\end{equation*}
where $[n]$ is the $n$-point discrete space. This is the space of ordered, distinct points (`particles') in $M$, each carrying a label (or parameter) in $X$. The symmetric group acts diagonally on this space, permuting the points along with their labels, and we define the \emph{unordered configuration space} to be the quotient
\begin{equation*}
C_n (M,X) \coloneqq \widetilde{C}_n (M,X) / \Sigma_n .
\end{equation*}
If instead we just quotient out by the action of the alternating group, we obtain the \emph{oriented configuration space}
\begin{equation*}
C_n^+ (M,X) \coloneqq \widetilde{C}_n (M,X) / A_n .
\end{equation*}
\end{defn}
\begin{notation}
We will denote elements of ordered, oriented, unordered configuration spaces respectively by $\left( \pair{p_1}{x_1} \cdots \pair{p_n}{x_n} \right)$, $\left[ \pair{p_1}{x_1} \cdots \pair{p_n}{x_n} \right]$, $\left\lbrace \pair{p_1}{x_1} \cdots \pair{p_n}{x_n} \right\rbrace$, where $p_i \in M$ and $x_i \in X$. So square brackets denote the equivalence class under even permutations of the columns. The orientation-reversing automorphism
\begin{equation*}
\left[ \pair{p_1}{x_1} \cdots \pair{p_n}{x_n} \right] \quad \mapsto \quad \left[ \pair{p_1}{x_1} \cdots \pair{p_{n-2}}{x_{n-2}} \pair{p_n}{x_n} \pair{p_{n-1}}{x_{n-1}} \right]
\end{equation*}
of $C_n^+ (M,X)$ will be denoted by $\nu$. We will often abbreviate these spaces to $\widetilde{C}_n (M)$, $C_n^+ (M)$, and $C_n (M)$ when the space of labels is clear, to avoid cluttering our notation.
\end{notation}

\subsection{Adding a point to a configuration space}\label{subsec:stabilisation:maps}

To add a point to a configuration on $M$, there needs to be somewhere `at infinity' from which to push in this new configuration point. An appropriate condition is to `admit a boundary':
\begin{defn}
We say that $M$ \emph{admits a boundary} if it is the interior of some manifold-with-boundary $\mbar$. Note that we do not require $\mbar$ to be compact.
\end{defn}
\noindent When $M$ admits a boundary, there is a natural adding-a-point map, as follows:
\begin{defn}
Suppose that $M = \mathrm{int}(\mbar)$, where $\mbar$ is a manifold-with-boundary, and choose a point $b_0 \in \partial \mbar$. Let $B_0 = \partial_0 \mbar$ be the boundary-component containing $b_0$. Choose also a basepoint $x_0 \in X$. We initially define the adding-a-point map at the level of \emph{ordered} configuration spaces to be
\begin{equation*}
\left( \pair{p_1}{x_1} \cdots \pair{p_n}{x_n} \right) \quad \mapsto \quad \left( \pair{p_1}{x_1} \cdots \pair{p_n}{x_n} \pair{b_0}{x_0} \right) .
\end{equation*}
This is a map $\widetilde{C}_n (M,X) \to \widetilde{C}_{n+1} (M^\prime ,X)$, where $M^\prime$ is $\mbar$ with an open collar attached at $B_0$:
\begin{equation*}
M^\prime = \mbar \cup_{B_0} \bigl( B_0 \times [0,1) \bigr) .
\end{equation*}
Choosing a canonical homeomorphism $\phi\colon M^\prime \cong M$ (with support contained in a small neighbourhood of $B_0$), which pushes this collar back into $M$, we obtain a map
\begin{equation*}
s\colon \widetilde{C}_n (M,X) \longrightarrow \widetilde{C}_{n+1} (M,X).
\end{equation*}
This process is illustrated in Figure \ref{fig:adding:a:point:map}. The map $s$ is equivariant w.r.t.\ the standard inclusion $\Sigma_n \hookrightarrow \Sigma_{n+1}$ (and hence also w.r.t.\ $A_n \hookrightarrow A_{n+1}$), so it descends to maps
\begin{align*}
&& s\colon C_n (M,X) &\longrightarrow C_{n+1} (M,X) && \\
&\text{and}& s\colon C_n^+ (M,X) &\longrightarrow C_{n+1}^+ (M,X). &&
\end{align*}
\end{defn}
\begin{figure}[ht]
\centering
\begin{tikzpicture}
[x=1mm,y=1mm,>=stealth']
\node at (-4,0) [color=gray,font=\small] {$\cdots$};
\fill[black!15] (15,4) arc (90:270:2 and 4) -- (0,-4) arc (270:90:2 and 4) -- cycle;
\draw (0,4)--(15,4);
\draw (0,-4)--(15,-4);
\draw (15,0) ellipse (2 and 4);
\draw[dashed] (15,4)--(20,4);
\draw[dashed] (15,-4)--(20,-4);
\draw[dashed] (20,0) ellipse (2 and 4);
\node[circle,fill,inner sep=1pt] at (13,0) {};
\node at (11,0) [font=\footnotesize] {$b_0$};
\draw[->,thick] (26,0)--(33,0);
\begin{scope}[xshift=43mm]
\node at (-4,0) [color=gray,font=\small] {$\cdots$};
\fill[black!15] (15,4) arc (90:270:2 and 4) -- (0,-4) arc (270:90:2 and 4) -- cycle;
\draw (0,4)--(15,4);
\draw (0,-4)--(15,-4);
\draw (15,0) ellipse (2 and 4);
\draw (15,4)--(20,4);
\draw (15,-4)--(20,-4);
\draw (20,0) ellipse (2 and 4);
\node[circle,fill,inner sep=1pt] at (13,0) {};
\draw[->] (17.5,2)--(9,2);
\draw[->] (17.5,-2)--(9,-2);
\end{scope}
\draw[->,thick] (69,0)--(76,0);
\begin{scope}[xshift=86mm]
\node at (-4,0) [color=gray,font=\small] {$\cdots$};
\fill[black!15] (10,4) arc (90:270:2 and 4) -- (0,-4) arc (270:90:2 and 4) -- cycle;
\draw (0,4)--(15,4);
\draw (0,-4)--(15,-4);
\draw (15,0) ellipse (2 and 4);
\draw[dashed] (10,0) ellipse (2 and 4);
\node[circle,fill,inner sep=1pt] at (8,0) {};
\end{scope}
\end{tikzpicture}
\caption{The adding-a-point map $s$. The original configuration is contained in the interior of the shaded region in each picture.}\label{fig:adding:a:point:map}
\end{figure}
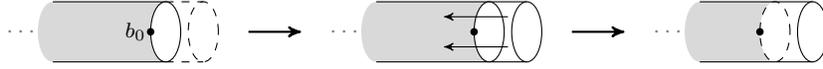
\begin{notation}
We will generally refer to the adding-a-point maps $s$ as \emph{stabilisation maps}, since these are the maps with respect to which the unordered and oriented configuration spaces stabilise. When it is necessary to keep track of the number of points in a configuration, we write $s=s_n$ for the map which adds the $(n+1)$-st point to a configuration. In the oriented case, we define $-s \coloneqq \nu\circ s$ (and $+s \coloneqq s$). So $-s$ just takes the opposite orientation convention in its definition, sending $\left[ \pair{p_1}{x_1} \cdots \pair{p_n}{x_n} \right]$ to $\left[ \pair{p_1}{x_1} \cdots \pair{p_{n-1}}{x_{n-1}} \pair{b_0}{x_0} \pair{p_n}{x_n} \right]$ instead of $\left[ \pair{p_1}{x_1} \cdots \pair{p_n}{x_n} \pair{b_0}{x_0} \right]$.
\end{notation}
\begin{rmk}
Up to homotopy, the stabilisation map $s$ depends only on the choice of boundary-component $B_0$, and the choice of path-component of $X$ containing $x_0$. We will later only consider the case when $X$ is path-connected, so $s$ will only depend on which `end' of the manifold the new configuration point is pushed in from.
\end{rmk}
\begin{rmk}\label{rmk:s:and:minus:s:difference}
Since $\pm s$ only differ by an automorphism of their common codomain, they have exactly the same properties w.r.t.\ injectivity- and surjectivity-on-homology, so they are interchangeable for the purposes of homology-stability.
\end{rmk}

\subsection{Semi-simplicial spaces}

In general, a semi-simplicial space (which we will call a $\Delta$-space) is a diagram of the form
\begin{center}
\begin{tikzpicture}
[x=1mm,y=1mm,>=stealth']
\node at (0,0) {$\cdots$};
\node at (20,0) {$Y_1$};
\node at (40,0) {$Y_0$};
\draw[->] (5,2)--(15,2);
\draw[->] (5,0)--(15,0);
\draw[->] (5,-2)--(15,-2);
\draw[->] (25,1)--(35,1);
\draw[->] (25,-1)--(35,-1);
\end{tikzpicture}
\end{center}
where the `face maps' $d_i \colon Y_k \to Y_{k-1}$ ($1\leq i\leq k+1$) satisfy the simplicial identities $d_i d_j = d_{j-1} d_i$ whenever $i<j$. The $\Delta$-space as a whole is denoted $Y_\bullet$. An \emph{augmented} $\Delta$-space is a diagram of the form
\begin{center}
\begin{tikzpicture}
[x=1mm,y=1mm,>=stealth']
\node at (0,0) {$\cdots$};
\node at (20,0) {$Y_1$};
\node at (40,0) {$Y_0$};
\node at (60,0) {$Y_{-1}$};
\draw[->] (5,2)--(15,2);
\draw[->] (5,0)--(15,0);
\draw[->] (5,-2)--(15,-2);
\draw[->] (25,1)--(35,1);
\draw[->] (25,-1)--(35,-1);
\draw[->] (45,0)--(55,0);
\end{tikzpicture}
\end{center}
where again the face maps satisfy the simplicial identities. In other words this is a $\Delta$-space together with an `augmentation map' $Y_0 \to Y_{-1}$ which equalises the two face maps $d_1, d_2 \colon Y_1 \rightrightarrows Y_0$. A map of (augmented) $\Delta$-spaces is a collection of maps, one for each level $k$, which commutes with $d_i$ for each $i$.

The (thick) geometric realisation of a $\Delta$-space $Y_\bullet$ is $\geomr{Y_\bullet} = \bigl( \coprod_{k\geq 0} Y_k \times \Delta^k \bigr) \rightquotient\! \sim$, where $\sim$ is the equivalence relation generated by the face relations $(d_i (y),z) \sim (y, \delta_i (z))$, where $\delta_i$ is the inclusion of the $i$th face of $\Delta^{k+1}$. If $Y_\bullet$ is an augmented $\Delta$-space, there is a unique composition of face maps $Y_k \to Y_{-1}$ for each $k$. These fit together to give an induced map $\geomr{Y_\bullet} \to Y_{-1}$, where $\geomr{Y_\bullet}$ is the geometric realisation of the \emph{non-augmented} part of $Y_\bullet$.
\begin{defn}
A $\Delta$-space $Y_\bullet$ with an augmentation to $Y_{-1}$ such that the induced map $\geomr{Y_\bullet} \to Y_{-1}$ is $n$-connected is called an `$n$-resolution' of $Y_{-1}$ in the terminology of \cite{Randal:Williams2010} and \cite{Randal:Williams2011}.
\end{defn}

\subsubsection{A configuration $\Delta$-space.}

We now extend the oriented configuration space $C_n^+ (M,X)$ so that it is the $(-1)$st level of an augmented $\Delta$-space.
\begin{defn}
The augmented $\Delta$-space $C_n^+ (M,X)^\bullet$ is defined as follows: The elements of the space of $i$-simplices $C_n^+ (M,X)^i$ are configurations $\left[ \pair{p_1}{x_1} \cdots \pair{p_n}{x_n} \right]$, together with an \emph{ordering} of $i+1$ of the pairs $\left( \pair{p_i}{x_i} \right)$. In particular $C_n^+ (M,X)^{-1}$ is just $C_n^+ (M,X)$, and $C_n^+ (M,X)^0$ consists of (oriented, labelled) configurations with one of the points marked out as `special'. The face map $d_j$ is given by forgetting the $j$th element of the $(i+1)$-ordering; in particular the augmentation map is the map $C_n^+ (M,X)^0 \to C_n^+ (M,X)$ which forgets which point is `special'.
\end{defn}
\begin{rmk}\label{rmk:n:minus:1:resolution}
We will show later (see Corollary \ref{coro:n:minus:1:resolution}) that $C_n^+ (M,X)^\bullet$ is an $(n-1)$-resolution of $C_n^+ (M,X)$.
\end{rmk}
\begin{note}
The definition of $\pm s$ clearly extends to each level $C_n^+ (M,X)^i$ and commutes with the face maps, so we have maps of augmented $\Delta$-spaces:
\begin{equation*}
C_n^+ (M,X)^\bullet \xrightarrow{\pm s^\bullet} C_{n+1}^+ (M,X)^\bullet .
\end{equation*}
\end{note}
\noindent As before, we will often abbreviate the augmented $\Delta$-space $C_n^+ (M,X)^\bullet$ to $C_n^+ (M)^\bullet$.

\subsection{Maps between configuration spaces}

We will make use of the following maps between configuration spaces in the proof of the Main Theorem.

\subsubsection{$\varepsilon_n$ and $a_n$.}

These come automatically from the structure of the augmented $\Delta$-space $C_n^+ (M,X)^\bullet$: $a_n$ denotes the augmentation map
\begin{equation*}
a_n \colon C_n^+ (M,X)^0 \longrightarrow C_n^+ (M,X),
\end{equation*}
which forgets which point is the `special' point, and $\varepsilon_n$ is the induced map
\begin{equation*}
\varepsilon_n \colon \geomr{C_n^+ (M,X)^\bullet} \longrightarrow C_n^+ (M,X)
\end{equation*}
from the geometric realisation of the unaugmented part of the $\Delta$-space to the augmentation.

\begin{aside}[\emph{Puncturing $M$}]
Recall from the beginning of the section that $M_1$, $M_k$ denote the connected manifold $M$ with any point, or more generally any $k$ points, removed. Since $M$ is connected, the manifolds resulting from removing different choices of a set of $k$ points can all be (non-canonically) identified. So where necessary we may assume that $M_1$ means $M$ with a point \emph{near $B_0$} (in the sense of the definition of the stabilisation map) removed. It is also implicitly assumed that we remember the inclusion $M_k \hookrightarrow M$, as well as the abstract manifold $M_k$.
\end{aside}

\subsubsection{$p_n$ and $u_n$.}\label{subsubsec:pn:and:un}

The maps
\begin{equation*}
C_n^+ (M) \xrightarrow{p_n} C_n^+ (M_1) \xrightarrow{u_n} C_n^+ (M)
\end{equation*}
`puncture' and `unpuncture' the manifold $M$ respectively. The second map is easiest to describe: it is just induced by the inclusion $M_1 \hookrightarrow M$. The puncturing map $p_n$ is defined similarly to the stabilisation map. Let $M^{(1)}$ be \mbar\ with an open collar attached at $B_0$, and then punctured at $b_0$:
\begin{equation*}
M^{(1)} = \mbar \cup_{B_0} \bigl( B_0 \times [0,1) \bigr) \setminus \lbrace b_0 \rbrace .
\end{equation*}
Then $p_n$ is induced by the inclusion $M \hookrightarrow M^{(1)}$ and the canonical homeomorphism $\phi|_{M^{(1)}} \colon M^{(1)} \cong M_1$ (from the definition of the stabilisation map) which pushes the collar back into $M$.
\begin{rmk}\label{rmk:up:homotopic:to:id:1}
The composition $u_n \circ p_n$ is homotopic to the identity, since it just pushes the configuration away from $B_0$ slightly.
\end{rmk}

\subsubsection{$\pi_{n,i}$ and $j_{n,i}$.}

The projection
\begin{equation*}
\pi_{n,i} \colon C_n^+ (M)^i \longrightarrow \widetilde{C}_{i+1} (M)
\end{equation*}
forgets all but the $(i+1)$-ordered points of the configuration in $C_n^+ (M)^i$. It clearly commutes with the stabilisation map: $\pi_{n+1,i} \circ s_n = \pi_{n,i}$.

This is a fibre bundle, with fibre homeomorphic to $C_{n-i-1}^+ (M_{i+1})$ when ${i\leq n-3}$. This is closely analogous to the fibre bundle constructed by Fadell--Neuwirth in \cite[Theorem 3]{Fadell:Neuwirth1962}, and the fact that \emph{this} is a fibre bundle is proved in detail as Lemma 1.26 in \cite[page 26]{Kassel:Turaev2008}, so we refer to this for a detailed exposition. To find a trivialising neighbourhood for $\left( \pair{p_1}{x_1} \cdots \pair{p_{i+1}}{x_{i+1}} \right) \in \widetilde{C}_{i+1} (M)$, one just needs to choose pairwise disjoint open neighbourhoods for the points $p_1, ..., p_{i+1} \in M$. The condition $i\leq n-3$ is to ensure that the fibre is path-connected; in the cases $i=n-2$ and $i=n-1$, the fibre is $M_{n-1} \times X \times [2]$ and $[2]$ respectively.

Pick a basepoint $\left( \pair{m_1}{x_0} \cdots \pair{m_{i+1}}{x_0} \right) \in \widetilde{C}_{i+1} (M)$. We define $j_{n,i}$ to be the inclusion of the fibre:
\begin{equation*}
j_{n,i} \colon C_{n-i-1}^+ (M_{i+1}) = C_{n-i-1}^+ (M \setminus \lbrace m_1, ..., m_{i+1} \rbrace) \;\hookrightarrow\; C_n^+ (M)^i .
\end{equation*}
In identifying the fibre abstractly with $C_{n-i-1}^+ (M_{i+1})$, we have implicitly chosen a convention for combining the orientation of $\left[ \pair{p_1}{x_1} \cdots \pair{p_{n-i-1}}{x_{n-i-1}} \right] \in C_{n-i-1}^+ (M_{i+1})$ with the ordering $\left( \pair{m_1}{x_0} \cdots \pair{m_{i+1}}{x_0} \right)$ to induce an orientation of all $n$ points. We declare this convention to be $\left[ \pair{m_1}{x_0} \cdots \pair{m_{i+1}}{x_0} \pair{p_1}{x_1} \cdots \pair{p_{n-i-1}}{x_{n-i-1}} \right]$, which completes the definition of $j_{n,i}$.

So abstractly $j_{n,i}$ is a map which replaces $i+1$ punctures with $i+1$ new configuration points, which are additionally given an $(i+1)$-ordering. The new points are all labelled by $x_0 \in X$, and the orientation of the new, larger configuration is given by the convention stated above.

\begin{rmk}
Due to our choice of orientation convention for $j_{n,i}$, these maps commute with stabilisation maps, and we have a map of fibre bundles:
\begin{equation}\label{eqn:j:commuting:with:s}
\centering
\begin{split}
\begin{tikzpicture}
[x=1.5mm,y=1.2mm,>=stealth']
\node (tl) at (0,10) {$C_{n-i}^+ (M_{i+1})$};
\node (tm) at (25,10) {$C_{n+1}^+ (M)^i$};
\node (bl) at (0,0) {$C_{n-i-1}^+ (M_{i+1})$};
\node (bm) at (25,0) {$C_n^+ (M)^i$};
\node (r) at (50,5) {$\widetilde{C}_{i+1} (M)$};
\draw [<-,>=right hook] (tl) to ($ (tl.east)!0.5!(tm.west) $) node[above,font=\small]{$j_{n+1,i}$};
\draw [->] ($ (tl.east)!0.5!(tm.west) $) to (tm);
\draw [<-,>=right hook] (bl) to ($ (bl.east)!0.5!(bm.west) $) node[below,font=\small]{$j_{n,i}$};
\draw [->] ($ (bl.east)!0.5!(bm.west) $) to (bm);
\draw [->] (bl) to node[left,font=\small]{$s$} (tl);
\draw [->] (bm) to node[right,font=\small]{$s^i$} (tm);
\draw [->>] (tm) to node[above,font=\footnotesize]{$\pi_{n+1,i}$} (r);
\draw [->>] (bm) to node[below,font=\footnotesize]{$\pi_{n,i}$} (r);
\end{tikzpicture}
\end{split}
\end{equation}
\end{rmk}
\begin{rmk}\label{rmk:factorisation:of:sn}
The composition
\begin{equation*}
C_n^+ (M) \xrightarrow{p_n} C_n^+ (M_1) \xrightarrow{j_{n+1,0}} C_{n+1}^+ (M)^0 \xrightarrow{a_{n+1}} C_{n+1}^+ (M)
\end{equation*}
sends $\left[ \pair{p_1}{x_1} \cdots \pair{p_n}{x_n} \right]$ to $\left[ \pair{b_0}{x_0} \pair{\bar{p}_1}{x_1} \cdots \pair{\bar{p}_n}{x_n} \right]$, where $\bar{p}_i = \phi (p_i)$ is $p_i$ pushed slightly away from $B_0$ if it is near $B_0$. Hence this is a factorisation of $(-1)^n s_n$.

This factorisation will be key to the proof of the Main Theorem, and the appearance of $(-1)^n$ here is in a sense where the extra complication (compared to the unordered case) comes from --- and why we only obtain a stability slope of $\frac13$.
\end{rmk}

\subsection{Relative configuration spaces}\label{subsec:relative:config:spaces}

\begin{defn}
We define the \emph{relative configuration space} to be the mapping cone of the (positive) stabilisation map:
\begin{equation*}
R_n^+ (M,X) \coloneqq \hocofib \bigl( C_n^+ (M,X) \xrightarrow{s_n} C_{n+1}^+ (M,X) \bigr) .
\end{equation*}
Similarly, $R_n^+ (M,X)^i$ is defined to be the mapping cone of the stabilisation map $s_n^i$ between the $i$th levels of the corresponding $\Delta$-spaces. Since the face maps commute exactly with the stabilisation maps, they induce relative face maps which give $\lbrace R_n^+ (M,X)^i \rbrace_{i\geq -1}$ the structure of an augmented $\Delta$-space $R_n^+ (M,X)^\bullet$. Again, we will usually abbreviate the notation to $R_n^+ (M)$ and $R_n^+ (M)^\bullet$ when $X$ is understood.
\end{defn}

\subsection{Maps between relative configuration spaces}

All our maps $\widetilde{f} \colon R_n^+ (M)^i \longrightarrow R_{n^\prime}^+ (M^\prime)^{i^\prime}$ between relative configuration spaces will be induced by maps defined on the non-relative configuration spaces:
\begin{center}
\begin{tikzpicture}
[x=1.5mm,y=1.5mm,>=stealth']
\node (tl) at (0,10) {$C_n^+ (M)^i$};
\node (tm) at (20,10) {$C_{n+1}^+ (M)^i$};
\node (tr) at (40,10) {$R_n^+ (M)^i$};
\node (bl) at (0,0) {$C_{n^\prime}^+ (M^\prime)^{i^\prime}$};
\node (bm) at (20,0) {$C_{n^\prime +1}^+ (M^\prime)^{i^\prime}$};
\node (br) at (40,0) {$R_{n^\prime}^+ (M^\prime)^{i^\prime}$};
\draw [->] (tl) to node[above,font=\small]{$s$} (tm);
\draw [->] (tm) to (tr);
\draw [->] (bl) to node[below,font=\small]{$s$} (bm);
\draw [->] (bm) to (br);
\draw [->] (tl) to node[left,font=\small]{$f$} (bl);
\draw [->] (tm) to node[right,font=\small]{$f$} (bm);
\draw [->] (tr) to node[right,font=\small]{$\widetilde{f}$} (br);
\node at (10,5) [font=\small] {$H_f$};
\end{tikzpicture}
\end{center}
Note that $\widetilde{f}$ (even up to homotopy) depends on the non-relative maps $f$, \emph{and} the homotopy $H_f$ chosen to fill the square.

\subsubsection{$\widetilde{\jmath}_{n,i}$, $\widetilde{a}_n$, and $\widetilde{u}_n$.}\label{subsubsec:relative:maps:1}

We define relative versions of the inclusion-of-the-fibre, augmentation, and unpuncturing maps as follows:

\begin{defn}
The maps $j_{n,i}$, $a_n$, and $u_n$ commute exactly with stabilisation maps, so we may define
\begin{align*}
&& \widetilde{\jmath}_{n,i} &\colon R_{n-i-1}^+ (M_{i+1}) \longrightarrow R_n^+ (M)^i , && \\
&& \widetilde{a}_n &\colon R_n^+ (M)^0 \longrightarrow R_n^+ (M) , && \\
&\text{and}& \widetilde{u}_n &\colon R_n^+ (M_1) \longrightarrow R_n^+ (M) &&
\end{align*}
as explained above, taking the homotopy $H_f$ to be the \emph{constant} homotopy in each case.
\end{defn}

\subsubsection{$\widetilde{p}_n$ and relative stabilisation maps.}\label{subsubsec:relative:stabilisation:maps}

We now define relative versions of the puncturing map $p_n$, the stabilisation map $s\colon C_n(M) \to C_{n+1}(M)$ and the (negative) iterated stabilisation map $-s^2 = \nu\circ s\circ s \colon C_n^+(M) \to C_{n+2}^+(M)$.
\begin{notation}
To differentiate clearly between the unordered and oriented cases, we will temporarily (for Definition \ref{defn:relative:stab:maps} and Remark \ref{rmk:relative:stab:maps} below) use the following notation when we want to emphasise which case we are dealing with: $\breve{s}$ denotes the stabilisation map between \emph{u}nordered configuration spaces, and $\mathring{s}$ denotes the stabilisation map between \emph{o}riented configuration spaces. So we want to define relative versions of $p_n$, $\breve{s}$, and $-\mathring{s}^2$.
\end{notation}
\begin{defn}\label{defn:relative:stab:maps}
Embed $\bbH = \lbrace (x,y)\in\bbR^2 | y\geq 0 \rbrace$ in $\mbar$, in a neighbourhood of the boundary-component $B_0$, so that $b_0$ is identified with $(0,0)$, and such that the homeomorphism $\phi\colon M^\prime \cong M$ from the definition of the stabilisation map restricts to $(x,y) \mapsto (x,y+1)$ on $\bbH$. So on $\bbH$, the stabilisation map pushes points up by $1$ and adds a new point at $(0,1)$.

Define the self-homotopies $(12)\colon \breve{s}^2 \simeq \breve{s}^2$ and $(123), (132)\colon -\mathring{s}^3 \simeq -\mathring{s}^3$ to fix the original configuration, and move the new configuration points around on $\bbH$ as illustrated below:
\begin{center}
\begin{tikzpicture}
[x=1mm,y=1mm,>=stealth']
\fill[black!15] (-10,8mm+1pt) rectangle (10,15);
\draw (-10,0)--(10,0);
\draw[dashed] (-10,0)--(-10,15);
\draw[dashed] (10,0)--(10,15);
\node[circle,fill,inner sep=0.5pt] at (0,4) {};
\node[circle,fill,inner sep=0.5pt] at (0,8) {};
\draw [->] ($ (0,4) + (60:0.5) $) to[out=60,in=300] ($ (0,8) + (300:0.5) $);
\draw [->] ($ (0,8) + (240:0.5) $) to[out=240,in=120] ($ (0,4) + (120:0.5) $);
\draw (0,-1pt)--(0,1pt);
\node at (1,-2) [font=\footnotesize] {$b_0$};
\node at (-16,7.5) {$\phantom{3}(12)\! :$};
\begin{scope}[xshift=40mm]
\fill[black!15] (-10,12mm+1pt) rectangle (10,15);
\draw (-10,0)--(10,0);
\draw[dashed] (-10,0)--(-10,15);
\draw[dashed] (10,0)--(10,15);
\node[circle,fill,inner sep=0.5pt] at (0,4) {};
\node[circle,fill,inner sep=0.5pt] at (0,8) {};
\node[circle,fill,inner sep=0.5pt] at (0,12) {};
\draw [->] ($ (0,12) + (240:0.5) $) to[out=240,in=120] ($ (0,8) + (120:0.5) $);
\draw [->] ($ (0,8) + (240:0.5) $) to[out=240,in=120] ($ (0,4) + (120:0.5) $);
\draw [->] ($ (0,4) + (50:0.5) $) to[out=50,in=310] ($ (0,12) + (310:0.5) $);
\draw (0,-1pt)--(0,1pt);
\node at (1,-2) [font=\footnotesize] {$b_0$};
\node at (-16,7.5) {$(123)\! :$};
\end{scope}
\begin{scope}[xshift=80mm]
\fill[black!15] (-10,12mm+1pt) rectangle (10,15);
\draw (-10,0)--(10,0);
\draw[dashed] (-10,0)--(-10,15);
\draw[dashed] (10,0)--(10,15);
\node[circle,fill,inner sep=0.5pt] at (0,4) {};
\node[circle,fill,inner sep=0.5pt] at (0,8) {};
\node[circle,fill,inner sep=0.5pt] at (0,12) {};
\draw [->] ($ (0,12) + (230:0.5) $) to[out=230,in=130] ($ (0,4) + (130:0.5) $);
\draw [->] ($ (0,4) + (60:0.5) $) to[out=60,in=300] ($ (0,8) + (300:0.5) $);
\draw [->] ($ (0,8) + (60:0.5) $) to[out=60,in=300] ($ (0,12) + (300:0.5) $);
\draw (0,-1pt)--(0,1pt);
\node at (1,-2) [font=\footnotesize] {$b_0$};
\node at (-16,7.5) {$(132)\! :$};
\end{scope}
\end{tikzpicture}
\end{center}
(The original configuration is contained in the interior of the shaded region in each case.) Now, the left square below
\begin{equation}\label{eqn:squares:of:stabilisation:maps}
\centering
\begin{split}
\begin{tikzpicture}
[x=1mm,y=1mm,>=stealth',font=\small]
\node (a) at (0,10) {$\cdot$};
\node (b) at (20,10) {$\cdot$};
\node (c) at (0,0) {$\cdot$};
\node (d) at (20,0) {$\cdot$};
\draw [->] (a) to node[above]{$\breve{s}$} (b);
\draw [->] (c) to node[below]{$\breve{s}$} (d);
\draw [->] (a) to node[left]{$\breve{s}$} (c);
\draw [->] (b) to node[right]{$\breve{s}$} (d);
\node at (10,5) {\rotatebox{225}{$\Rightarrow$}};
\begin{scope}[xshift=50mm]
\node (a) at (0,10) {$\cdot$};
\node (b) at (20,10) {$\cdot$};
\node (c) at (0,0) {$\cdot$};
\node (d) at (20,0) {$\cdot$};
\draw [->] (a) to node[above]{$\mathring{s}$} (b);
\draw [->] (c) to node[below]{$\mathring{s}$} (d);
\draw [->] (a) to node[left]{$-\mathring{s}^2$} (c);
\draw [->] (b) to node[right]{$-\mathring{s}^2$} (d);
\node at (10,5) {\rotatebox{225}{$\Rightarrow$}};
\end{scope}
\end{tikzpicture}
\end{split}
\end{equation}
admits the identity homotopy $1$ and the homotopy $(12)$. These induce \emph{relative stabilisation maps}
\begin{equation*}
\widetilde{s}_1, \widetilde{s}_{(12)} \colon R_n (M) \longrightarrow R_{n+1} (M)
\end{equation*}
on relative unordered configuration spaces. Similarly the right square admits $1$, $(123)$ and $(132)$, which induce \emph{relative double stabilisation maps}
\begin{equation*}
\widetilde{s}^2_1, \widetilde{s}^2_{(123)}, \widetilde{s}^2_{(132)} \colon R_n^+ (M) \longrightarrow R_{n+2}^+ (M)
\end{equation*}
on relative oriented configuration spaces.
\end{defn}
\begin{rmk}\label{rmk:relative:stab:maps}
The natural self-homotopies $\breve{s}^2 \simeq \breve{s}^2$ come from the different ways of moving the two new configuration points around in the collar neighbourhood ${B_0 \times [0,1)}$, so they are parametrised by $\pi_1 C_2 (B_0 \times [0,1))$. We are only considering those which are supported in a coordinate neighbourhood near $b_0$, which are parametrised by $\pi_1 C_2 (\bbR^d)$. This is either $\Sigma_2$ ($d\geq 3$) or $\beta_2$ ($d=2$); the homotopy $(12)$ defined above corresponds respectively to the only nontrivial element, or a generator.

The analogous statement holds for self-homotopies $-\mathring{s}^3 \simeq -\mathring{s}^3$, replacing `$C_2$' by `$C_3^+$'. In this case $\pi_1 C_3^+ (\bbR^d)$ is either $A_3$ ($d\geq 3$) or $A\beta_3$ ($d=2$); again the homotopies $(123)$, $(132)$ defined above correspond respectively to the only nontrivial elements, or a generating pair.
\end{rmk}
\begin{defn}
To define the relative puncturing map $\widetilde{p}_n \colon R_n^+ (M) \longrightarrow R_n^+ (M_1)$, we need to choose a homotopy $sp_n \simeq p_n s$. Similarly to the definition of the relative stabilisation maps, we define this to fix the original configuration, and swap the puncture
\tikz[baseline=-0.6ex]{%
\node[circle,draw,inner sep=1pt] at (0,0) {};
}
and the new configuration point
\tikz[baseline=-0.6ex]{%
\node[circle,fill,inner sep=1pt] at (0,0) {};
}
on $\bbH$ as illustrated below:
\begin{center}
\begin{tikzpicture}
[x=1mm,y=1mm,>=stealth']
\fill[black!15] (-10,8mm+2pt) rectangle (10,15);
\draw (-10,0)--(10,0);
\draw[dashed] (-10,0)--(-10,15);
\draw[dashed] (10,0)--(10,15);
\node[circle,fill,inner sep=1pt] at (0,4) {};
\node[circle,draw,inner sep=1pt] at (0,8) {};
\draw [->] ($ (0,4) + (60:0.5) $) to[out=60,in=300] ($ (0,8) + (300:0.5) $);
\draw [->] ($ (0,8) + (240:0.5) $) to[out=240,in=120] ($ (0,4) + (120:0.5) $);
\draw (0,-1pt)--(0,1pt);
\node at (1,-2) [font=\footnotesize] {$b_0$};
\end{tikzpicture}
\end{center}
This homotopy will be called $(12)_p$, to distinguish it from the homotopy $(12)$ fitting in to the left square of \eqref{eqn:squares:of:stabilisation:maps}.
\end{defn}
\begin{rmk}\label{rmk:factorisation:of:relative:sn}
By Remark \ref{rmk:factorisation:of:sn}, and the definitions above, the composition
\begin{equation*}
\widetilde{a}_{n+2} \circ \widetilde{\jmath}_{n+2,0} \circ \widetilde{p}_{n+1} \circ \widetilde{a}_{n+1} \circ \widetilde{\jmath}_{n+1,0} \circ \widetilde{p}_n \colon R_n^+ (M) \longrightarrow R_{n+2}^+ (M)
\end{equation*}
is a factorisation of $\widetilde{s}^2_H$, where $H$ is the composite homotopy
\begin{center}
\begin{tikzpicture}
[x=0.7mm,y=0.7mm,font=\footnotesize]
\draw (0,0) rectangle (20,10);
\draw (0,5) -- (20,5);
\node at (10,2.5) {$(12)$};
\node at (10,7.5) {$(12)$};
\end{tikzpicture}
\end{center}
We are reading this in the direction \rotatebox[origin=c]{45}{$\Leftarrow$}, so this is $(132)$. We note that the homotopy $(123)$ \emph{also} factorises into two copies of $(12)$, but pasted together differently:
\begin{center}
\begin{tikzpicture}
[x=0.7mm,y=0.7mm,>=stealth',font=\footnotesize]
\draw (0,0) rectangle (20,10);
\draw [->] (18,9) -- (2,1);
\node at (5,7) {$(12)$};
\node at (15,3) {$(12)$};
\end{tikzpicture}
\end{center}
The diagonal map here is the (positive) stabilisation map $s\colon C_{n+1}^+ (M) \longrightarrow C_{n+2}^+ (M)$.
\end{rmk}
\begin{rmk}\label{rmk:up:homotopic:to:id:2}
We note that the composition $\widetilde{u}_n \circ \widetilde{p}_n$ is homotopic to the identity (c.f.\ Remark \ref{rmk:up:homotopic:to:id:1}). Indeed, composing the diagrams defining $\widetilde{u}_n$ and $\widetilde{p}_n$ results in
\begin{center}
\begin{tikzpicture}
[>=stealth']
\begin{scope}[x=1.5mm,y=1.2mm,font=\footnotesize]
\node (tl) at (0,10) {$\cdot$};
\node (tm) at (20,10) {$\cdot$};
\node (tr) at (40,10) {$\cdot$};
\node (bl) at (0,0) {$\cdot$};
\node (bm) at (20,0) {$\cdot$};
\node (br) at (40,0) {$\cdot$};
\draw [->] (tl) to node[above]{$s$} (tm);
\draw [->] (tm) to (tr);
\draw [->] (bl) to node[below]{$s$} (bm);
\draw [->] (bm) to (br);
\draw [->] (tl) to node[left]{$u_n \circ p_n$} (bl);
\draw [->] (tm) to node[right]{$u_{n+1} \circ p_{n+1}$} (bm);
\draw [->] (tr) to node[right]{$\widetilde{u}_n \circ \widetilde{p}_n$} (br);
\node at (10,5) {$\text{\rotatebox[origin=c]{45}{$\Leftarrow$}}_H$};
\end{scope}
\draw[black!50,dashed] (75mm,-5mm)--(75mm,15mm);
\begin{scope}[xshift=95mm,x=1mm,y=1mm]
\node at (-15,5) [font=\footnotesize] {$H=$};
\fill[black!15] (-10,8mm+2pt) rectangle (10,12);
\draw (-10,0)--(10,0);
\draw[dashed] (-10,0)--(-10,12);
\draw[dashed] (10,0)--(10,12);
\node[circle,fill,inner sep=0.5pt] at (0,8) {};
\draw [->] ($ (0,8) + (240:0.5) $) to[out=240,in=120] (0,4);
\draw (0,-1pt)--(0,1pt);
\node at (1,-2) [font=\footnotesize] {$b_0$};
\end{scope}
\end{tikzpicture}
\end{center}
and a little thought shows that the maps $u_n \circ p_n$, $u_{n+1} \circ p_{n+1}$ and the homotopy $H$ can be simultaneously homotoped to identities, which induces a homotopy from $\widetilde{u}_n \circ \widetilde{p}_n$ to the identity.
\end{rmk}

\section{Sketch of the proof}\label{sec:sketch:of:the:proof}

The aim of this section is to explain some of the ideas in the proof of the Main Theorem, and especially how the proof differs from the proof of the \emph{unordered} version of this theorem. The proof itself is contained in sections \ref{sec:two:spectral:sequences}, \ref{sec:the:connectivity:of:the:unpuncturing:map}, and \ref{sec:proof:of:the:main:theorem} below, and does not depend on the contents of the present section, which is purely an overview.

\subsection{The unordered case}

We first outline the proof of the \emph{unordered} version of the Main Theorem, due to Randal-Williams:
\begin{thm}[\cite{Randal:Williams2011}] \label{thm:the:unordered:version}
If $M$ is the interior of a connected manifold-with-boundary of dimension at least $2$, and $X$ is any path-connected space, then the stabilisation map
\begin{equation*}
C_n (M,X) \xrightarrow{s} C_{n+1} (M,X)
\end{equation*}
is an isomorphism on homology up to degree $\frac{n-2}{2}$ and a surjection up to degree $\frac{n}{2}$. Equivalently,
\begin{equation}\label{eqn:IH:unordered}
\widetilde{H}_* R_n (M,X) = 0 \text{ for } *\leq \tfrac{n}{2}.
\end{equation}
\end{thm}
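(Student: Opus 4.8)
The plan is to prove the equivalent statement displayed in the theorem, namely $\widetilde H_\ast R_n(M,X)=0$ for $\ast\le n/2$, by induction on $n$, keeping $X$ fixed but letting $M$ range over \emph{all} interiors of connected manifolds-with-boundary of a fixed dimension $d\ge 2$. This extra latitude is essential because the inductive hypothesis will be fed back in for the punctured manifolds $M_k$, which are again of this type. The base cases are immediate: $C_n(M,X)\to C_{n+1}(M,X)$ is surjective on $\pi_0$ between path-connected spaces, so $R_n(M,X)$ is connected with vanishing $\widetilde H_0$.

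The first step is to build the resolution spectral sequence. Since geometric realisation commutes with homotopy cofibres, $\geomr{R_n(M)^\bullet}\simeq\hocofib(\geomr{s_n^\bullet})$, and the augmentations of the configuration $\Delta$-spaces assemble to a map $\geomr{R_n(M)^\bullet}\to R_n(M)$ which — because $C_m(M)^\bullet\to C_m(M)$ is an $(m-1)$-resolution (the unordered form of Corollary~\ref{coro:n:minus:1:resolution}) — is roughly $n$-connected. The skeletal filtration then yields a spectral sequence $E^1_{p,q}=\widetilde H_q(R_n(M)^p)\Rightarrow\widetilde H_{p+q}(R_n(M))$ (for $p\ge 0$ and $p+q$ up to about $n$), with $d^1$ the alternating sum of face maps, as recalled in Appendix~\ref{appendix:spectral:sequences:from:dspaces}; so it suffices to kill $E^\infty_{p,q}$ for $p+q\le n/2$. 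The crucial input is that $\pi_{n,p}$ exhibits $R_n(M)^p$ as a fibre bundle over $\widetilde C_{p+1}(M)$ with fibre $R_{n-p-1}(M_{p+1})$ — obtained by passing to fibrewise homotopy cofibres of the bundle map $s_n^p$ over $\widetilde C_{p+1}(M)$ — so the inductive hypothesis gives $\widetilde H_t R_{n-p-1}(M_{p+1})=0$ for $t\le(n-p-1)/2$, and the relative Serre spectral sequence of $\pi_{n,p}$ (using that the reduced homology of the homotopy cofibre of a $\pi_0$-surjection vanishes in degree $0$) upgrades this to $E^1_{p,q}=\widetilde H_q(R_n(M)^p)=0$ for $q\le(n-p-1)/2$. (The few levels $p\ge n-2$, where $\pi_{n,p}$ has a different fibre, do not meet the range $p+q\le n/2$.)

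Comparing this vanishing line with the region $p+q\le n/2$: one has $q\le n/2-p\le(n-p-1)/2$ whenever $p\ge 1$, and for \emph{all} $p\ge 0$ when $n$ is odd. Hence for $n$ odd the spectral sequence collapses in the relevant range and $\widetilde H_\ast R_n(M)=0$ for $\ast\le\lfloor n/2\rfloor$, which is the full asserted range, completing the induction in this case. For $n$ even the same reasoning kills every $E^1_{p,q}$ with $p+q\le n/2$ except the single corner entry $E^1_{0,n/2}=\widetilde H_{n/2}R_n(M)^0$; so one obtains $\widetilde H_m R_n(M)=0$ for $m\le n/2-1$ together with a surjection $(\widetilde a_n)_\ast\colon\widetilde H_{n/2}R_n(M)^0\twoheadrightarrow\widetilde H_{n/2}R_n(M)$ (the edge map of the spectral sequence, induced by the augmentation), and the Serre spectral sequence of $\pi_{n,0}$ identifies $\widetilde H_{n/2}R_n(M)^0$ with a quotient of the $\pi_1$-coinvariants of $\widetilde H_{n/2}R_{n-1}(M_1)$ — exactly one degree beyond what the inductive hypothesis controls.

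The hard part, then, is closing this last gap for $n$ even, and this is where the factorisation of the stabilisation map and an excision argument enter. Using $s_n\simeq a_{n+1}\circ j_{n+1,0}\circ p_n$ (Remark~\ref{rmk:factorisation:of:sn}) in its relative form, together with $\widetilde u_n\circ\widetilde p_n\simeq\id$ (the unordered analogue of Remark~\ref{rmk:up:homotopic:to:id:2}), the map $(\widetilde p_n)_\ast\colon\widetilde H_\ast R_n(M)\to\widetilde H_\ast R_n(M_1)$ is split injective; and a stratification of configurations by how they meet a small $d$-disc about the removed point should express $\hocofib(u_n)$, up to homotopy and in a range, in terms of $C_{n-1}(M_1)$ shifted by $d$, hence (passing to cofibres of the stabilisation maps) $\hocofib(\widetilde u_n)$ in terms of $R_{n-1}(M_1)$ shifted by $d$, with appropriate twisting. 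By the inductive hypothesis this forces $\widetilde H_\ast\hocofib(\widetilde u_n)=0$ for $\ast\le(n-1)/2+d$, which since $d\ge 2$ exceeds $n/2$, so $(\widetilde u_n)_\ast$ — and therefore $(\widetilde p_n)_\ast$ — is a homology isomorphism through degree $n/2$; feeding this, and the analogous estimate one degree down, back into the corner of the spectral sequence should then yield $\widetilde H_{n/2}R_n(M)=0$ and complete the induction. I expect the genuine difficulty to be exactly this combination: the naive bookkeeping at the corner is circular, and one must exploit the slack provided by $d\ge 2$ and the interplay of $\widetilde a_n$, $\widetilde u_n$ and $\widetilde p_n$ to break the loop; establishing the excision equivalence itself — with the right range, bundle and coefficients — is also a substantial technical point (the content of \S\ref{sec:the:connectivity:of:the:unpuncturing:map}).
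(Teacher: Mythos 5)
Your reduction of the theorem to a single corner entry for $n$ even is sound: the resolution spectral sequence, the relative Serre spectral sequences of the bundles $\pi_{n,p}$, and the inductive hypothesis applied to the punctured manifolds do kill everything in total degree $\leq n/2$ except, when $n$ is even, the entry $E^1_{0,n/2}=\widetilde{H}_{n/2}R_n(M)^0$; and the excision argument does show that $(\widetilde{u}_n)_*$ and hence $(\widetilde{p}_n)_*$ are isomorphisms through degree $n/2$. But the final step is a genuine gap, and what you have actually assembled is only the \emph{surjectivity} half of the argument. Every statement you extract at the corner has the form ``$\widetilde{H}_{n/2}R_n(M)$ is a quotient of such-and-such'': $(\widetilde{a}_n)_*$ is onto, $(\widetilde{\jmath}_{n,0})_*$ is onto $\widetilde{H}_{n/2}R_n(M)^0$, and $(\widetilde{p}_{n-1})_*$ is an isomorphism in that degree, so the net content is that the relative stabilisation map $(\widetilde{a}_n\circ\widetilde{\jmath}_{n,0}\circ\widetilde{p}_{n-1})_*\colon \widetilde{H}_{n/2}R_{n-1}(M)\to\widetilde{H}_{n/2}R_n(M)$ is surjective. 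The source is itself one degree beyond the inductive hypothesis (and its own resolution has \emph{two} uncontrolled corner entries in degree $n/2$), so feeding these surjections back into the spectral sequence cannot force the target to vanish. The circularity you flag is real, and the slack $d\geq 2$ does not break it: it improves the range for $\widetilde{u}_n$ and $\widetilde{p}_n$, but what is actually needed is injectivity of $(s_{n-1})_*$ in degree $n/2-1$, which the inductive hypothesis misses by exactly one degree.

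The missing ingredient is a complementary \emph{zero-on-homology} (equivalently, injectivity) input, and in Randal-Williams's proof it comes from an entirely separate transfer argument: the maps $\tau_{k,n}\colon SP^\infty C_n(M,X)\to SP^\infty C_k(M,X)$ sending a configuration to the sum of its $k$-point subconfigurations satisfy the hypotheses of Dold's Lemma \ref{lem:inj:of:s}, so by the Dold--Thom theorem $s_*$ is split injective on homology in \emph{every} degree (\S\ref{subsec:inj:of:s}) --- this is special to the unordered case. Feeding this into the exact ladder comparing the cofibration sequences of $s_{n-1}$ and $s_n$ shows that $H_{n/2}C_n(M)\to H_{n/2}R_{n-1}(M)$ is surjective; combined with your surjection $\widetilde{H}_{n/2}R_{n-1}(M)\twoheadrightarrow\widetilde{H}_{n/2}R_n(M)$ and the vanishing of the composite $H_*C_n(M)\to H_*C_{n+1}(M)\to H_*R_n(M)$ from the cofibration sequence, this forces $\widetilde{H}_{n/2}R_n(M)=0$. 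Without this lemma (or some substitute supplying the one extra degree of injectivity), your induction does not close for even $n$.
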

\begin{proof}[Sketch of proof]\renewcommand{\qedsymbol}{}
Since $M$ and $X$ are path-connected and $\mathrm{dim} (M)\geq 2$, all the configuration and relative configuration spaces are also path-connected, so $\widetilde{H}_0 R_n (M,X)=0$ for all $n$. This proves the $n=0,1$ cases of \eqref{eqn:IH:unordered}; the general result will be proved by induction on $n$.

The strategy will be to construct some map with target $R_n (M,X) = R_n (M)$, and then prove that it is both zero and surjective on homology up to degree $\frac{n}{2}$. Two possible maps into $R_n (M)$ are the relative stabilisation maps $\widetilde{s}_1$ and $\widetilde{s}_{(12)}$ defined in \S \ref{subsubsec:relative:stabilisation:maps}, which are induced by putting the homotopies $1$, $(12)$ into the left-hand square of \eqref{eqn:squares:of:stabilisation:maps}. By the unordered version of Remark \ref{rmk:factorisation:of:sn}, the vertical maps $s$ in this square factorise into $a\circ j\circ p$ (corresponding to puncturing the manifold, then replacing the puncture by a new configuration point which is marked as special, and then forgetting which point is special). Now, the unordered versions of the maps $\widetilde{p}$, $\widetilde{\jmath}$, $\widetilde{a}$ on relative configuration spaces are defined similarly to the oriented ones: $\widetilde{p}$ is induced by a square containing the homotopy $(12)_p$ and $\widetilde{\jmath}$, $\widetilde{a}$ are induced by squares containing the identity homotopy. Hence the homotopy $(12)$ respects the factorisation $s=a\circ j\circ p$ (i.e.\ it factorises into
\tikz[baseline=0.4ex,x=1mm,y=1mm,scale=0.3,>=stealth']{%
\draw (0,0) rectangle (10,10);
\draw [->,very thin] (1,3.3) -- (9,3.3);
\draw [->,very thin] (1,6.6) -- (9,6.6);
}),
whereas the identity homotopy $1$ does not. So $\widetilde{s}_{(12)}$ has an induced factorisation $\widetilde{s}_{(12)} = \widetilde{a}\circ \widetilde{\jmath}\circ \widetilde{p}$. On the other hand $1$ trivially factorises into triangles
\tikz[baseline=0.4ex,x=1mm,y=1mm,scale=0.3,>=stealth']{%
\draw (0,0) rectangle (10,10);
\draw [->] (9,9) -- (1,1);
}
(here, the diagonal map and both homotopies are just identities), but $(12)$ does not.

Now, by some intricate arguments (this is where the bulk of the proof lies, and is contained in \S\S \ref{sec:two:spectral:sequences}, \ref{sec:the:connectivity:of:the:unpuncturing:map}, \ref{sec:proof:of:the:main:theorem} for the oriented case) the induced factorisation of $\widetilde{s}_{(12)}$ into $\widetilde{a}\circ \widetilde{\jmath}\circ \widetilde{p}$ allows us, using the inductive hypothesis, to prove that it is \emph{surjective} on homology up to the required degree. On the other hand a factorisation into triangles
\tikz[baseline=0.4ex,x=1mm,y=1mm,scale=0.3,>=stealth']{%
\draw (0,0) rectangle (10,10);
\draw [->] (9,9) -- (1,1);
}
automatically gives a nullhomotopy of the induced map on mapping cones; hence $\widetilde{s}_1$ is \emph{zero} on homology (in all degrees). But neither map factorises both ways, so this doesn't yet finish the inductive step. Instead, in the unordered case, the following trick suffices to finish it off:

We have a map of long exact sequences
\begin{center}
\begin{tikzpicture}
[x=1mm,y=1mm,scale=1.5,>=stealth',font=\small]
\node (t1) at (0,10) {$H_* C_n (M)$};
\node (t2) at (20,10) {$H_* R_{n-1} (M)$};
\node (t3) at (40,10) {$H_{*-1} C_{n-1} (M)$};
\node (t4) at (60,10) {$H_{*-1} C_n (M)$};
\node (b1) at (0,0) {$H_* C_{n+1} (M)$};
\node (b2) at (20,0) {$H_* R_n (M)$};
\node (b3) at (40,0) {$H_{*-1} C_n (M)$};
\node (b4) at (60,0) {$H_{*-1} C_{n+1} (M)$};
\node (tl) at (-10,10) {};
\node (tr) at (70,10) {};
\node (bl) at (-10,0) {};
\node (br) at (70,0) {};
\draw[->] (tl) to (t1);
\draw[->] (t1) to (t2);
\draw[->] (t2) to (t3);
\draw[->] (t3) to node[above,font=\small]{$s_*$} (t4);
\draw[->] (t4) to (tr);
\draw[->] (bl) to (b1);
\draw[->] (b1) to (b2);
\draw[->] (b2) to (b3);
\draw[->] (b3) to (b4);
\draw[->] (b4) to (br);
\draw[->] (t1) to (b1);
\draw[->>] (t2) to node[right,font=\footnotesize]{$(\widetilde{s}_{(12)})_*$} (b2);
\draw[->] (t3) to (b3);
\draw[->] (t4) to (b4);
\draw[->,rounded corners=5pt] (4,8) to (4,2) to node[above,font=\small]{$0$} (16,2);
\end{tikzpicture}
\end{center}
where the indicated composition is zero since it is induced by a cofibration sequence. In the range of degrees under consideration we know that $(\widetilde{s}_{(12)})_*$ is surjective, so it is sufficient to prove surjectivity of the map $H_* C_n (M) \longrightarrow H_* R_{n-1} (M)$. By exactness, this is equivalent to injectivity of $s_* \colon H_{*-1} C_{n-1} (M) \longrightarrow H_{*-1} C_n (M)$. The inductive hypothesis only gives us this in the range $*\leq \frac{n-1}{2}$, which is not quite enough. However, in the unordered case one can show, by a completely different argument, that $s_*$ is split-injective in \emph{every} degree (see \S \ref{subsec:inj:of:s}), so this finishes the proof.
\end{proof}

\subsection{The oriented case}\label{subsec:sketch:of:the:proof:oriented}

The oriented version of this theorem is the main theorem of this paper:
\begin{mainthm}
If $M$ is the interior of a connected manifold-with-boundary of dimension at least $2$, and $X$ is any path-connected space, then the stabilisation map
\begin{equation*}
C_n^+ (M,X) \xrightarrow{s} C_{n+1}^+ (M,X)
\end{equation*}
is an isomorphism on homology up to degree $\frac{n-5}{3}$ and a surjection up to degree $\frac{n-2}{3}$. Equivalently,
\begin{equation}\label{eqn:IH:oriented}
\widetilde{H}_* R_n^+ (M,X) = 0 \text{ for } *\leq \tfrac{n-2}{3}.
\end{equation}
\end{mainthm}
\begin{proof}[Sketch of proof]\renewcommand{\qedsymbol}{}
The basic strategy for the inductive step in the oriented case is the same: find a map with target $R_n^+ (M,X) = R_n^+ (M)$ which is both zero and surjective on homology up to degree $\frac{n-2}{3}$. By analogy with the unordered case, the first thing one might try is the relative stabilisation maps induced by
\begin{center}
\begin{tikzpicture}
[x=1mm,y=1mm,>=stealth',font=\small]
\node (a) at (0,10) {$\cdot$};
\node (b) at (20,10) {$\cdot$};
\node (c) at (0,0) {$\cdot$};
\node (d) at (20,0) {$\cdot$};
\draw [->] (a) to node[above,font=\footnotesize]{$s$} (b);
\draw [->] (c) to node[below,font=\footnotesize]{$s$} (d);
\draw [->] (a) to node[left,font=\footnotesize]{$\pm s$} (c);
\draw [->] (b) to node[right,font=\footnotesize]{$\pm s$} (d);
\node at (10,5) [font=\footnotesize] {$H$};
\draw[black!50,dashed] (30,-1) -- (30,11);
\node [anchor=west] at (35,5) {with $H= \left\lbrace \begin{smallmatrix} 1 & \text{\itshape if the vertical maps have the same sign} \\ (12) & \text{\itshape if the vertical maps have opposite signs} \end{smallmatrix} \right\rbrace .$};
\end{tikzpicture}
\end{center}
Similarly to before, we would like the homotopy $H$ to factorise like
\tikz[baseline=0.4ex,x=1mm,y=1mm,scale=0.3,>=stealth']{%
\draw (0,0) rectangle (10,10);
\draw [->,very thin] (1,3.3) -- (9,3.3);
\draw [->,very thin] (1,6.6) -- (9,6.6);
},
so we need to choose the case where the vertical maps have opposite signs and $H=(12)$. This gives an induced factorisation of the relative stabilisation map into $\widetilde{a}\circ\widetilde{\jmath}\circ\widetilde{p}$, which allows us to prove that it is surjective on homology, by the same kind of arguments as in the unordered case.

However, $(12)$ does \emph{not} factorise into triangles
\tikz[baseline=0.4ex,x=1mm,y=1mm,scale=0.3,>=stealth']{%
\draw (0,0) rectangle (10,10);
\draw [->] (9,9) -- (1,1);
},
so we cannot deduce that it is also zero on homology. So far this is just as in the unordered case, but this time the `ladder trick' which finished off the inductive step in the unordered case does \emph{not} work: It depends on knowing injectivity of $s_*$ in all degrees, in advance, by a separate argument, but in the oriented case $s_*$ is \emph{not} always injective (see \S \ref{sec:failure:of:injectivity}).

So to solve this we will instead construct a \emph{different} factorisation of the relative stabilisation map on homology, and then use this factorisation (and naturality of the factorisation w.r.t.\ stabilisation maps) to show that it factors through the zero map in the required range of degrees. This new factorisation is actually just a general construction for homotopy-commutative squares: the map on mapping cones induced by choosing any particular homotopy to fill the square has a certain factorisation on homology --- as long as the square admits \emph{some} homotopy which factorises into triangles
\tikz[baseline=0.4ex,x=1mm,y=1mm,scale=0.3,>=stealth']{%
\draw (0,0) rectangle (10,10);
\draw [->] (9,9) -- (1,1);
}.
However, we do not currently have such a split homotopy. To remedy this, we can stack two copies of our square on top of each other; this produces the right-hand square of diagram \eqref{eqn:squares:of:stabilisation:maps}, filled by the homotopy $(132)$. So we have extended our map into $R_n^+ (M)$ further back, to $\widetilde{s}^2_{(132)} \colon R_{n-2}^+ (M) \to R_n^+ (M)$.

Now we also have the homotopy $(123)$ filling the same square, and as noted in Remark \ref{rmk:factorisation:of:relative:sn}, this factorises into triangles
\tikz[baseline=0.4ex,x=1mm,y=1mm,scale=0.3,>=stealth']{%
\draw (0,0) rectangle (10,10);
\draw [->] (9,9) -- (1,1);
}
(as does the identity homotopy, in fact). This allows us to construct the aforementioned factorisation of $\widetilde{s}^2_{(132)}$ on homology, which is
\begin{center}
\begin{tikzpicture}
[x=1.5mm,y=1mm,scale=1,>=stealth']
\node (a) at (0,0) {$R_{n-2}^+ (M)$};
\node (b) at (20,0) {$\Sigma C_{n-2}^+ (M)$};
\node (c) at (40,0) {$C_{n+1}^+ (M)$};
\node (d) at (60,0) {$R_n^+ (M),$};
\draw [->] (a) to (b);
\draw [->,dashed] (b) to (c);
\draw [->] (c) to (d);
\end{tikzpicture}
\end{center}
where a dotted arrow indicates a map defined only on homology.

In this case one can also check that the middle part of the factorisation commutes with stabilisation maps in the following way:
\begin{center}
\begin{tikzpicture}
[x=1.5mm,y=1.2mm,scale=1,>=stealth']
\node (a) at (0,0) {$R_{n-2}^+ (M)$};
\node (b) at (20,0) {$\Sigma C_{n-2}^+ (M)$};
\node (c) at (40,0) {$C_{n+1}^+ (M)$};
\node (d) at (60,0) {$R_n^+ (M)$};
\node (b2) at (20,-10) {$\Sigma C_{n-3}^+ (M)$};
\node (c2) at (40,-10) {$C_n^+ (M)$};
\draw [->] (a) to (b);
\draw [->,dashed] (b) to (c);
\draw [->] (c) to (d);
\draw [->,dashed] (b2) to (c2);
\draw [->] (b2) to node[left,font=\small]{$\Sigma (-s_{n-3})$} (b);
\draw [->] (c2) to node[right,font=\small]{$s_n$} (c);
\node at (30,-5) [font=\small] {$\circlearrowleft$};
\end{tikzpicture}
\end{center}
Now we can show that the top row ($\widetilde{s}^2_{(132)}$ on homology) is zero in the desired range:

The inductive hypothesis implies that $\Sigma (-s_{n-3})$ is surjective on homology in this range, so we can factor the top row along the bottom of the diagram like
\tikz[x=2mm,y=2mm,>=stealth']{%
\draw [->] (0,1)--(1,1)--(1,0)--(2,0)--(2,1)--(3.5,1);
}.
In particular it factors through $C_n^+ (M) \xrightarrow{s_n} C_{n+1}^+ (M) \to R_n^+ (M)$, which is zero on homology since it is induced by a cofibration sequence.

This finishes the inductive step, since \emph{surjectivity}-on-homology can be proved as before, using the factorisation $\widetilde{s}^2_{(132)} = \widetilde{a}\circ\widetilde{\jmath}\circ\widetilde{p}\circ\widetilde{a}\circ\widetilde{\jmath}\circ\widetilde{p}$. However, note that we are now using the inductive hypothesis from further back (to prove surjectivity for the ``older'' copies of $\widetilde{a}_*$, $\widetilde{\jmath}_*$, $\widetilde{p}_*$), which results in a \emph{smaller} improvement in the range of stability during each inductive step --- and hence the slower rate of stabilisation in the oriented case.
\end{proof}
\begin{rmk}
This narrative outlines a fairly direct link from the existence of a global parameter on configuration spaces to the reduced stability slope: Firstly it means that injectivity of $s_*$ fails (see \S \ref{sec:failure:of:injectivity} for more on this), cutting off one line of attack, and secondly it makes the other line of attack weaker: The global parameter is an obstruction to the existence of certain self-homotopies of iterated stabilisation maps, which are needed to do the \emph{zero}-on-homology half of the proof in this line of attack. Hence we need to extend our map into $R_n^+ (M)$ further back to obtain such self-homotopies. This means we need to use the inductive hypothesis from further back, to prove \emph{surjectivity}-on-homology for the ``older'' parts of this map, and so this only goes through for a \emph{smaller} range of degrees --- hence we get a smaller increase in the stability range with each inductive step, and hence the rate of stabilisation is slower.
\end{rmk}

\section{Two spectral sequences}\label{sec:two:spectral:sequences}

In this section we first establish the two spectral sequences to be used in the proof of the Main Theorem, and then show that (as mentioned in Remark \ref{rmk:n:minus:1:resolution}) the augmented $\Delta$-space $C_n^+ (M,X)^\bullet$ is an $(n-1)$-resolution, implying that one of our spectral sequences converges to zero in a range of degrees.

\subsection{General constructions}

The first spectral sequence we will make use of is a relative version of the Serre spectral sequence. We denote the mapping cone of a map $g$ by $Cg$.
\begin{prop}\label{prop:rsss}
Suppose $f$ is a map of fibrations over a path-connected space $B$
\begin{equation}\label{eqn:map:of:fibrations}
\centering
\begin{split}
\begin{tikzpicture}
[x=1mm,y=1mm,>=stealth']
\node (e0) at (-10,10) {$E_0$};
\node (e1) at (10,10) {$E_1$};
\node (b) at (0,0) {$B$};
\draw [->] (e0) to (b);
\draw [->] (e1) to (b);
\draw [->] (e0) to node[above,font=\small]{$f$} (e1);
\end{tikzpicture}
\end{split}
\end{equation}
Let $F_0, F_1$ be the fibres over a point $b\in B$, and denote the restriction of $f$ to $F_0 \to F_1$ by $f_b$. Then there is a first quadrant spectral sequence
\begin{equation*}
E_{s,t}^2 \; \cong \; H_s (B; \widetilde{H}_t (Cf_b)) \quad \Rightarrow \quad \widetilde{H}_* (Cf)
\end{equation*}
in which the $r$th differential has bidegree $(-r,r-1)$. The edge homomorphism
\begin{equation*}
\widetilde{H}_t (Cf_b) \cong E^2_{0,t} \;\twoheadrightarrow\; E^{\infty}_{0,t} \;\hookrightarrow\; \widetilde{H}_t (Cf)
\end{equation*}
is the map on $\widetilde{H}_t$ induced by the inclusion $Cf_b \hookrightarrow Cf$.
\end{prop}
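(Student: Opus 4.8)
The plan is to produce the spectral sequence by applying the standard Serre spectral sequence machinery to an auxiliary fibration whose total space models $Cf$ fibrewise. The key observation is that the mapping cone of a map of fibrations is itself (up to homotopy) the total space of a fibration over the same base, with fibre the mapping cone of the map on fibres.

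First I would replace $f$ by a fibration: using the mapping cylinder construction fibrewise, factor $f\colon E_0 \to E_1$ as $E_0 \hookrightarrow E_0' \twoheadrightarrow E_1$ where the first map is a cofibration over $B$ and $E_0' \to E_1$ is a homotopy equivalence over $B$; this does not change $Cf$ up to homotopy. Then form the fibrewise mapping cone: let $\mathcal{C}f$ be the pushout of $E_1 \leftarrow E_0' \to E_0' \times_B (B \times [0,1]) / (\text{section at }1)$, or more concretely take the ordinary mapping cone $Cf$ and observe that the projections $E_0 \to B$, $E_1 \to B$ assemble to a map $q\colon Cf \to B$ (the cone point(s) can be pushed to lie over a chosen basepoint, or one uses the reduced mapping cone relative to $B$ so that the cone direction is collapsed fibrewise). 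I would check that $q\colon Cf \to B$ is a quasifibration (or genuinely a fibration after the fibrewise mapping cylinder replacement), with fibre over $b$ equal to $Cf_b$, where $f_b\colon F_0 \to F_1$ is the restriction. The cleanest route: work with the fibrewise pointed mapping cone, i.e. the pushout in spaces over $B$ of $E_1 \xleftarrow{f} E_0 \xrightarrow{} \mathrm{Cyl}_B(E_0 \to B)$ where the cylinder collapses $E_0$ to $B$ at one end; since $E_0 \to E_1$ may be taken to be a fibrewise cofibration, this pushout is a fibration over $B$ with fibre $Cf_b$ at each $b$.

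Next I would run the homology Serre spectral sequence of $q\colon Cf \to B$. Since $B$ is path-connected, $\pi_1(B)$ acts on $H_*(Cf_b)$ and the $E^2$-page is $E^2_{s,t} = H_s(B; \widetilde{H}_t(Cf_b))$ — here I use reduced homology of the fibre because the fibrewise cone point gives a section of $q$, so $H_*(Cf) = H_*(B) \oplus \widetilde{H}_*(Cf,\text{section})$ and the spectral sequence splits off the section to converge to $\widetilde{H}_*(Cf)$ with $E^2$ in reduced fibre homology; alternatively one notes $\widetilde{H}_t(Cf_b) \cong H_t(Cf_b, F_1)$ and uses the relative Serre spectral sequence of the pair directly. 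The differential $d^r$ has the usual bidegree $(-r, r-1)$ for a homology Serre spectral sequence. Convergence to $\widetilde{H}_*(Cf)$ follows from convergence of the ordinary Serre spectral sequence together with the splitting off of the section.

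Finally, the edge homomorphism: the inclusion of a fibre $Cf_b \hookrightarrow Cf$ over the basepoint $b$ induces, in any first-quadrant homology Serre spectral sequence, the composite $\widetilde{H}_t(Cf_b) = E^2_{0,t} \twoheadrightarrow E^\infty_{0,t} \hookrightarrow \widetilde{H}_t(Cf)$; this is a standard naturality property of the Serre spectral sequence (the bottom edge), which I would simply cite, checking only that under the identification of the fibre with $Cf_b$ it agrees with the map induced by $Cf_b \hookrightarrow Cf$ on reduced homology. The main obstacle is the first step: verifying carefully that $Cf \to B$ (after the appropriate fibrewise replacement) is a fibration — or at least a quasifibration with the correct fibres — so that the Serre spectral sequence applies and the fibre is genuinely $Cf_b$. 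Once that geometric input is in place, everything else is a direct invocation of standard Serre spectral sequence formalism, so I would keep that part brief and concentrate the write-up on the fibrewise mapping cone construction.
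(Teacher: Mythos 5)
Your proposal is correct and follows essentially the same route as the paper: form the fibrewise mapping cone over $B$ (the paper's $C^{\mathrm{fib}}f$), observe it is a fibration with fibre $Cf_b$ and a section given by the cone tips, run the Serre spectral sequence, and split off the $H_*(B)$ summand coming from the section to obtain the reduced statement, with the edge homomorphism identified by naturality. The paper likewise treats the passage from $C^{\mathrm{fib}}f$ to $Cf$ by collapsing the section and using the resulting split cofibration sequences, which is exactly the splitting you describe.
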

\noindent This is mentioned as Remark 2 on p.\ 351 of \cite{Switzer1975}. (There it is assumed that $f$ is an inclusion, but this can be ensured by replacing \eqref{eqn:map:of:fibrations} by a homotopy-equivalent diagram.) We will show how to derive this from the usual (absolute) Serre spectral sequence:
\begin{proof}
Let $C^{\mathrm{fib}}E_0$ be the \emph{fibrewise} cone on $E_0$, i.e.\ $E_0 \times [0,1]$ with $F_b \times \lbrace 1 \rbrace$ collapsed to a point separately for each fibre $F_b$, and let
\begin{equation*}
C^{\mathrm{fib}}f = E_1 \cup_f C^{\mathrm{fib}}E_0
\end{equation*}
(compare $Cf = E_1 \cup_f CE_0$). There is an induced fibration $p\colon C^{\mathrm{fib}}f \to B$, whose fibre is $Cf_b$, and which has a section $s\colon B \to C^{\mathrm{fib}}f$ taking $b^\prime \in B$ to the tip of the cone in the fibre over $b^\prime$. Collapsing this section gives a map $c\colon C^{\mathrm{fib}}f \to Cf$. These maps fit into the diagram
\begin{center}
\begin{tikzpicture}
[x=1mm,y=1.2mm,>=stealth']
\node (TL) at (0,20) {$\lbrace b \rbrace$};
\node (TM) at (20,20) {$Cf_b$};
\node (TR) at (40,20) {$Cf_b$};
\node (ML) at (0,10) {$B$};
\node (MM) at (20,10) {$C^{\mathrm{fib}}f$};
\node (MR) at (40,10) {$Cf$};
\node (BL) at (0,0) {$B$};
\node (BM) at (20,0) {$B$};
\draw [->] (TL) to (TM);
\draw [->] (TM) to node[above,font=\footnotesize]{$1$} (TR);
\draw [->] (ML) to node[above,font=\footnotesize]{$s$} (MM);
\draw [->] (MM) to node[above,font=\footnotesize]{$c$} (MR);
\draw [->] (BL) to node[below,font=\footnotesize]{$1$} (BM);
\draw [->] (TL) to (ML);
\draw [->] (ML) to node[left,font=\footnotesize]{$1$} (BL);
\incl{(TM)}{(MM)}
\draw [->] (MM) to node[right,font=\footnotesize]{$p$} (BM);
\incl{(TR)}{(MR)}
\end{tikzpicture}
\end{center}
where each \emph{vertical} sequence is a fibration sequence and each \emph{horizontal} sequence is a split cofibration sequence.

The required spectral sequence will be a direct summand of the Serre spectral sequence associated to the middle fibration --- this can be seen as follows: The map of fibrations in the diagram above induces a map of Serre spectral sequences, and the fact that the horizontal sequences are split cofibrations means that we can identify this map of spectral sequences, on each page $E^r$ and in the limit, as an inclusion of a direct summand. In particular,
\begin{align*}
&\text{on the $E^2$ page:}& H_s (B; H_t (pt)) &\;\hookrightarrow\; H_s (B; H_t (pt)) \oplus H_s (B; \widetilde{H}_t (Cf_b)) ; \\
&\text{in the limit:}& H_* (B) &\;\hookrightarrow\; H_* (B) \oplus \widetilde{H}_* (Cf).
\end{align*}
Passing to the \emph{other} direct summand now gives the required spectral sequence.

The claim about edge homomorphisms follows from the analogous fact about edge homomorphisms for the Serre spectral sequence associated to $Cf_b \hookrightarrow C^{\mathrm{fib}}f \twoheadrightarrow B$, of which our spectral sequence is a direct summand.
\end{proof}
To state the next construction of a spectral sequence, we first define the notion of a `double mapping cone':
\begin{defn}
Given a square of maps which commutes up to homotopy, and a chosen homotopy to fill this square, one can apply the mapping cone construction either vertically then horizontally, or horizontally then vertically. It is not hard to check that the resulting $2$-by-$2$ grid
\tikz[baseline=0.2ex,scale=0.8]{%
\draw [step=2mm] (0,0) grid (4mm,4mm);
}
of spaces and maps is the same up to homeomorphism whichever way around this is done. In particular the mapping cone (taken horizontally) of the induced map-on-mapping cones (taken vertically) is homeomorphic to the mapping cone (taken vertically) of the induced map-on-mapping-cones (taken horizontally). We define this to be the \emph{double mapping cone} of the original square-with-homotopy.
\end{defn}
The second spectral sequence we will need is constructed from a map of augmented $\Delta$-spaces. There are versions of this construction for $\Delta$-spaces and for augmented $\Delta$-spaces, which can be either basepointed or non-basepointed, and maps of any of the above. The version we will use is:
\begin{prop}\label{prop:dss}
Given a map of augmented $\Delta$-spaces $Y_\bullet \to Z_\bullet$, there is an induced square of maps
\begin{equation}\label{eqn:map:of:aug:dspaces}
\centering
\begin{split}
\begin{tikzpicture}
[x=1mm,y=1.2mm,>=stealth']
\node (TL) at (0,10) {$\geomr{Y_\bullet}$};
\node (TR) at (20,10) {$\geomr{Z_\bullet}$};
\node (BL) at (0,0) {$Y_{-1}$};
\node (BR) at (20,0) {$Z_{-1}$};
\draw [->] (TL) to (TR);
\draw [->] (BL) to (BR);
\draw [->] (TL) to (BL);
\draw [->] (TR) to (BR);
\end{tikzpicture}
\end{split}
\end{equation}
Denote the double mapping cone of this square by $C^2 (Y_\bullet \to Z_\bullet)$, and as before denote the mapping cone of $Y_s \to Z_s$ by $C(Y_s \to Z_s)$. Then there is a spectral sequence in $\lbrace s\geq -1, t\geq 0 \rbrace$,
\begin{equation*}
E_{s,t}^1 \; \cong \; \widetilde{H}_t (C(Y_s \to Z_s)) \quad \Rightarrow \quad \widetilde{H}_{*+1} (C^2 (Y_\bullet \to Z_\bullet)),
\end{equation*}
where the first differential is the alternating sum of the maps on homology induced by the relative face maps; in particular $E_{-1,t}^1 \longleftarrow E_{0,t}^1$ is $\widetilde{H}_t$ of the relative augmentation map.
\end{prop}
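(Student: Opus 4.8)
The plan is to recognise the sought-for spectral sequence as the standard spectral sequence of an augmented semi-simplicial space, applied to the levelwise mapping cone of $Y_\bullet \to Z_\bullet$, and then to identify its target with $\widetilde H_{*+1}(C^2(Y_\bullet \to Z_\bullet))$. First I would observe that, because a map of $\Delta$-spaces $Y_\bullet \to Z_\bullet$ is a collection of maps $Y_s \to Z_s$ commuting \emph{strictly} with every face map and with the augmentation, each square of face maps (at levels $s$ and $s-1$) commutes on the nose; hence the mapping cones $W_s \coloneqq C(Y_s \to Z_s)$ assemble, via the resulting induced maps on mapping cones — for which no choice of homotopy is needed — into an augmented semi-simplicial space $W_\bullet$ with $W_{-1} = C(Y_{-1}\to Z_{-1})$. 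Each $W_s$ is based at its cone point, every face map is based, and the face/augmentation maps of $W_\bullet$ are precisely the relative face and relative augmentation maps appearing in the statement. Likewise the square \eqref{eqn:map:of:aug:dspaces} commutes strictly, so its double mapping cone is well defined.

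Next I would invoke the general construction recalled in \ref{appendix:spectral:sequences:from:dspaces}: for a based augmented semi-simplicial space $W_\bullet$, filtering the (thick, based) realisation $\geomr{W_\bullet}$ of the non-augmented part by skeleta and filtering the mapping cone of the induced map $\geomr{W_\bullet} \to W_{-1}$ accordingly — with $W_{-1}$ placed in filtration degree $-1$ — produces a spectral sequence with $E^1_{s,t} \cong \widetilde H_t(W_s)$ for $s\geq -1$, whose first differential is the alternating sum of the maps induced by the face maps, the single such term for $s=-1$ being the augmentation map, converging to $\widetilde H_{*+1}(\hocofib(\geomr{W_\bullet}\to W_{-1}))$ (the degree shift by one being exactly the mapping-cone shift built into this hocofib). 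Applying this with $W_s = C(Y_s \to Z_s)$ yields at once the asserted $E^1$-page and first differential, including the claim that the map $E^1_{0,t}\to E^1_{-1,t}$ is $\widetilde H_t$ of the relative augmentation map.

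It then remains to identify $\hocofib(\geomr{W_\bullet}\to W_{-1})$ with $C^2(Y_\bullet\to Z_\bullet)$, and this is where the real work lies. The key point is that the levelwise mapping cone commutes with thick geometric realisation up to homotopy equivalence: writing $C(Y_s\to Z_s) = Z_s \sqcup (Y_s\times[0,1])/\!\sim$ and using that thick realisation preserves coproducts, quotients, and products with $[0,1]$, one obtains $\geomr{W_\bullet} \cong \geomr{Z_\bullet}\sqcup(\geomr{Y_\bullet}\times[0,1])/\!\sim$, where $\geomr{Y_\bullet}\times\{0\}$ is glued to $\geomr{Z_\bullet}$ along the realisation of $Y_\bullet\to Z_\bullet$, while $\geomr{Y_\bullet}\times\{1\}$ is collapsed \emph{not} to a single point but — since the cones were taken level by level — to the subcomplex spanned by the cone points, which is the thick realisation of a one-point semi-simplicial space and hence contractible. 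As this subcomplex is a cofibration subcomplex of $\geomr{W_\bullet}$, collapsing it further to a point is a homotopy equivalence; this identifies $\geomr{W_\bullet}$ with $C(\geomr{Y_\bullet}\to\geomr{Z_\bullet})$, and the identification is compatible with the maps down to $W_{-1} = C(Y_{-1}\to Z_{-1})$, so that $\geomr{W_\bullet}\to W_{-1}$ becomes the map of horizontal mapping cones of the square \eqref{eqn:map:of:aug:dspaces}. Its $\hocofib$ — the remaining, vertical, mapping cone — is by definition the double mapping cone $C^2(Y_\bullet\to Z_\bullet)$, so the spectral sequence converges to $\widetilde H_{*+1}(C^2(Y_\bullet\to Z_\bullet))$, as claimed.

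I expect this last step to be the main obstacle. One must argue carefully that "collapsing the cone direction level by level'' (which is what the levelwise mapping cone does) and "collapsing it all at once'' (which is what the mapping cone of $\geomr{Y_\bullet}\to\geomr{Z_\bullet}$ does) differ only by collapsing a contractible cofibered subcomplex, and one must carry this through the augmentation so that the \emph{entire} square \eqref{eqn:map:of:aug:dspaces}, rather than just its top edge, gets matched up — otherwise the two iterated mapping cones need not be identified. The only other point requiring attention is reconciling the precise wording of the convergence statement recalled in \ref{appendix:spectral:sequences:from:dspaces} (which may phrase the target through an augmented bar-type construction rather than literally as this $\hocofib$) with the double mapping cone.
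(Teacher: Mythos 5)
Your proposal is correct and follows essentially the same route as the paper: both regard the levelwise mapping cones $C(Y_s \to Z_s)$ as a based augmented $\Delta$-space, apply the based skeletal-filtration spectral sequence of \ref{appendix:spectral:sequences:from:dspaces}, and identify the abutment by commuting the mapping-cone construction with geometric realisation. The paper simply cites the commutation of homotopy colimits ($C(\geomr{Y_\bullet}\to\geomr{Z_\bullet})\simeq\geomrp{C(Y_\bullet\to Z_\bullet)}$) for that last step, where you verify it by hand by collapsing the contractible cofibred subcomplex of cone points.
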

\begin{proof}
The construction is given in \ref{appendix:spectral:sequences:from:dspaces}.
\end{proof}

\subsection{The spectral sequences to be used in the proof of the Main Theorem}

\begin{prop}\label{prop:the:two:spectral:sequences}
We have the following spectral sequences\textup{:}
\begin{align}
E_{s,t}^2 \; &\cong \; H_s \bigl( \widetilde{C}_{i+1} (M); \widetilde{H}_t (R_{n-i-1}^+ (M_{i+1})) \bigr) \quad &\Rightarrow \quad &\widetilde{H}_* (R_n^+ (M)^i) \label{eqn:RSSS}\tag{$\mathrm{RSSS}_i$}\\
E_{s,t}^1 \; &\cong \; \widetilde{H}_t (R_n^+ (M)^s) \quad &\Rightarrow \quad &\widetilde{H}_{*+1} C\widetilde{\varepsilon}_n \label{eqn:DSS}\tag{$\Delta$SS}
\end{align}
for $0\leq i\leq n-3$, where $C\widetilde{\varepsilon}_n$ is as follows\textup{:}
\begin{equation}\label{eqn:double:mapping:cone}
\centering
\begin{split}
\begin{tikzpicture}
[x=1.5mm,y=1.2mm,>=stealth']
\node (TL) at (0,20) {$\geomr{C_n^+ (M)^\bullet}$};
\node (TM) at (20,20) {$\geomr{C_{n+1}^+ (M)^\bullet}$};
\node (TR) at (40,20) {$\geomr{R_n^+ (M)^\bullet}$};
\node (ML) at (0,10) {$C_n^+ (M)$};
\node (MM) at (20,10) {$C_{n+1}^+ (M)$};
\node (MR) at (40,10) {$R_n^+ (M)$};
\node (BL) at (0,0) {$C\varepsilon_n$};
\node (BM) at (20,0) {$C\varepsilon_{n+1}$};
\node (BR) at (40,0) {$C\widetilde{\varepsilon}_n$};
\draw [->] (TL) to (TM);
\draw [>->] (TM) to (TR);
\draw [->] (ML) to (MM);
\draw [>->] (MM) to (MR);
\draw [->] (BL) to (BM);
\draw [>->] (BM) to (BR);
\draw [->] (TL) to node[left,font=\footnotesize]{$\varepsilon_n$} (ML);
\draw [>->] (ML) to (BL);
\draw [->] (TM) to node[right,font=\footnotesize]{$\varepsilon_{n+1}$} (MM);
\draw [>->] (MM) to (BM);
\draw [->] (TR) to node[right,font=\footnotesize]{$\widetilde{\varepsilon}_n$} (MR);
\draw [>->] (MR) to (BR);
\end{tikzpicture}
\end{split}
\end{equation}
The edge homomorphisms on the vertical axis of \eqref{eqn:RSSS} are the maps on $\widetilde{H}_t$ induced by $\widetilde{\jmath}_{n,i}$, and the leftmost $d^1$-differentials of \eqref{eqn:DSS} are the maps on $\widetilde{H}_t$ induced by $\widetilde{a}_n$.
\end{prop}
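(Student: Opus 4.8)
The plan is to obtain $(\mathrm{RSSS}_i)$ as an instance of Proposition~\ref{prop:rsss} and $(\Delta\mathrm{SS})$ as an instance of Proposition~\ref{prop:dss}, in each case applied to a map already constructed in \S\ref{sec:definitions:and:set:up}, and then to match the $E^2$- (resp.\ $E^1$-) page, the abutment and the asserted edge homomorphisms/differentials against the named objects by unwinding definitions. Because every commuting square involved here commutes \emph{strictly}, and the relative maps $\widetilde{\jmath}_{n,i}$ and $\widetilde{a}_n$ are by definition the maps induced on mapping cones by the constant homotopy, these identifications should be essentially formal; the only point requiring an actual check is that the relevant restriction-to-fibres really is the stabilisation map.

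For $(\mathrm{RSSS}_i)$, fix $0\le i\le n-3$. Then $\pi_{n,i}$ and $\pi_{n+1,i}$ are fibre bundles over the path-connected space $\widetilde{C}_{i+1}(M)$ with fibres over the chosen basepoint identified with $C_{n-i-1}^+(M_{i+1})$ and $C_{n-i}^+(M_{i+1})$, and by \eqref{eqn:j:commuting:with:s} the map $s^i\colon C_n^+(M)^i\to C_{n+1}^+(M)^i$ is a map of bundles over $\widetilde{C}_{i+1}(M)$. I would apply Proposition~\ref{prop:rsss} with $B=\widetilde{C}_{i+1}(M)$, $E_0=C_n^+(M)^i$, $E_1=C_{n+1}^+(M)^i$ and $f=s^i$. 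The orientation convention built into the definition of $j_{n,i}$ is exactly what makes the left-hand square of \eqref{eqn:j:commuting:with:s} commute on the nose, so the restriction $f_b$ of $s^i$ to the basepoint fibres is the stabilisation map $C_{n-i-1}^+(M_{i+1})\to C_{n-i}^+(M_{i+1})$, with no sign twist (contrast Remark~\ref{rmk:factorisation:of:sn}, where the twist \emph{does} appear). Hence $Cf_b=R_{n-i-1}^+(M_{i+1})$, while $Cf=Cs_n^i=R_n^+(M)^i$ by definition, yielding the stated $E^2$-page (with the monodromy local-coefficient system of $\pi_1\widetilde{C}_{i+1}(M)$) and abutment. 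Since the fibre inclusions are $j_{n,i}$ and $j_{n+1,i}$, the inclusion $Cf_b\hookrightarrow Cf$ of Proposition~\ref{prop:rsss} is the map on mapping cones induced by $(j_{n,i},j_{n+1,i})$ with the constant homotopy, i.e.\ $\widetilde{\jmath}_{n,i}$, which by that proposition is the edge homomorphism.

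For $(\Delta\mathrm{SS})$, I would apply Proposition~\ref{prop:dss} to the map of augmented $\Delta$-spaces $s^\bullet\colon C_n^+(M)^\bullet\to C_{n+1}^+(M)^\bullet$. Then $C(Y_s\to Z_s)=Cs_n^s=R_n^+(M)^s$ for $s\ge-1$, which gives the stated $E^1$-page; the $d^1$-differential is the alternating sum of the relative face maps, and its leftmost component $E^1_{0,t}\to E^1_{-1,t}$ is induced by the relative augmentation $R_n^+(M)^0\to R_n^+(M)^{-1}=R_n^+(M)$, which is $\widetilde{a}_n$. It then remains to identify $C^2\bigl(C_n^+(M)^\bullet\to C_{n+1}^+(M)^\bullet\bigr)$ with the $C\widetilde{\varepsilon}_n$ of \eqref{eqn:double:mapping:cone}: forming the double mapping cone of \eqref{eqn:map:of:aug:dspaces} by taking vertical mapping cones first produces $C\varepsilon_n$, $C\varepsilon_{n+1}$ and then the horizontal cone of the induced map, which is literally the bottom-right corner of \eqref{eqn:double:mapping:cone}; forming it by taking horizontal cones first produces $\geomr{R_n^+(M)^\bullet}$ and $R_n^+(M)$ (using that thick realisation commutes with homotopy cofibres of levelwise-cofibrant diagrams) and then $\hocofib(\widetilde{\varepsilon}_n)=C\widetilde{\varepsilon}_n$; by the symmetry in the definition of the double mapping cone the two agree. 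I expect the only genuinely non-bookkeeping step in the whole argument to be the verification just mentioned for $(\mathrm{RSSS}_i)$ — that the identification of the fibre of $\pi_{n,i}$ with $C_{n-i-1}^+(M_{i+1})$ intertwines $s^i$ with $s$ — together with the routine but not automatic fact that $\geomr{-}$ commutes with homotopy cofibres; everything else is tracking which induced-map-on-cones corresponds to $\widetilde{\jmath}_{n,i}$ and $\widetilde{a}_n$.
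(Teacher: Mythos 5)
Your proposal is correct and follows exactly the paper's route: the paper's own proof simply applies Proposition \ref{prop:rsss} to the map of fibre bundles \eqref{eqn:j:commuting:with:s} and Proposition \ref{prop:dss} to $s_n^\bullet\colon C_n^+(M,X)^\bullet\to C_{n+1}^+(M,X)^\bullet$, leaving the identifications you spell out (fibre restriction equals $s$ via the orientation convention for $j_{n,i}$, the edge maps as $\widetilde{\jmath}_{n,i}$ and $\widetilde{a}_n$, and $C^2=C\widetilde{\varepsilon}_n$ via the symmetry of the double mapping cone) as implicit bookkeeping. Your extra detail is accurate and only makes the argument more explicit.
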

\begin{proof}
This follows immediately by applying Proposition \ref{prop:rsss} to the map of fibre bundles \eqref{eqn:j:commuting:with:s}, and applying Proposition \ref{prop:dss} to the map of augmented $\Delta$-spaces
\begin{equation*}
s_n^\bullet \colon C_n^+ (M,X)^\bullet \longrightarrow C_{n+1}^+ (M,X)^\bullet .\qedhere
\end{equation*}
\end{proof}

\subsection{$C_n^+ (M,X)^\bullet$ is an $(n-1)$-resolution}

In the remainder of this section, we prove that the spectral sequence \eqref{eqn:DSS} converges to zero up to total degree $n-1$, which will follow from the fact that $C_n^+ (M,X)^\bullet$ is an $(n-1)$-resolution. First we define a certain semi-simplicial set:
\begin{defn}
Let $\inj ([i+1],[n])$ be the discrete space of all injections ${[i+1] \hookrightarrow [n]}$; these combine to form a $\Delta$-space $\inj ([\bullet +1],[n])$, with face maps induced by all strictly increasing functions $[i] \to [i+1]$.
\end{defn}
\noindent This appears as the fibre of the map $\varepsilon_n$, which we next prove is a fibre bundle:
\begin{lem}
The map $\varepsilon_n \colon \geomr{C_n^+ (M)^\bullet} \to C_n^+ (M)$ is a fibre bundle, with fibre homeomorphic to $\geomr{\inj ([\bullet +1],[n])}$.
\end{lem}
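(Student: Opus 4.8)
The plan is to exhibit explicit local trivialisations of $\varepsilon_n$ over a suitable open cover of $C_n^+(M)$, showing that the bundle is locally a product with fibre the geometric realisation of the semi-simplicial set $\inj([\bullet+1],[n])$. First I would note that $\varepsilon_n$ is, level by level, assembled from the projections $\pi_{n,i}\colon C_n^+(M)^i \to \widetilde C_{i+1}(M)$ followed by the (continuous) forgetful maps recording the underlying configuration; more directly, a point of $\geomr{C_n^+(M)^\bullet}$ is an oriented labelled configuration $\left[\pair{p_1}{x_1}\cdots\pair{p_n}{x_n}\right]$ together with a point $z$ of some simplex $\Delta^i$ and an ordering of $i+1$ of the columns, all modulo the face relations, and $\varepsilon_n$ simply forgets $z$ and the ordering. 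So the fibre over a fixed configuration $c = \left[\pair{p_1}{x_1}\cdots\pair{p_n}{x_n}\right]$ consists of all ways of choosing an injective tuple of columns of $c$ together with a point of the corresponding simplex, glued by faces — and since the columns of $c$ are canonically enumerated by $[n]$ (even though only up to even permutation globally, a point of the fibre only cares about the \emph{set} of $n$ points and their labels, and an injection into that $n$-element set is the same as an element of $\inj([i+1],[n])$), this is exactly $\geomr{\inj([\bullet+1],[n])}$.

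Next I would build the local trivialisation. Cover $C_n^+(M)$ by the open sets $U_\Phi$ indexed by $n$-tuples $\Phi = (V_1,\dots,V_n)$ of pairwise disjoint open subsets of $M$ (together with, if one wants, open subsets of $X$), where $U_\Phi$ consists of configurations having exactly one point in each $V_k$; together with the labels this is an open subset of $C_n^+(M)$, and these sets cover it. Over $U_\Phi$ one gets a canonical enumeration of the $n$ points of any configuration in $U_\Phi$ by $[n]$ — namely, the $k$-th point is the unique one lying in $V_k$ — which is \emph{locally} well-defined even though globally only the even-permutation class is. This enumeration is exactly the data needed: it identifies, compatibly with the face maps and hence after geometric realisation, $\varepsilon_n^{-1}(U_\Phi)$ with $U_\Phi \times \geomr{\inj([\bullet+1],[n])}$, the homeomorphism sending a point in the fibre (a chosen injective tuple of points of the configuration plus a simplex coordinate) to the corresponding injection $[i+1]\hookrightarrow [n]$ using the enumeration. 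Continuity and the fact that it is a homeomorphism level by level, compatible with the gluing data, gives a homeomorphism after realising; this is essentially the same verification as in the references cited for $\pi_{n,i}$ (Fadell--Neuwirth \cite{Fadell:Neuwirth1962}, or Lemma 1.26 of \cite{Kassel:Turaev2008}), applied simplicial-degreewise and then assembled.

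The one genuine subtlety — and I expect this to be the main obstacle to a fully careful write-up — is the interaction between the \emph{orientation} (the global even-permutation datum) and the local enumeration: on an overlap $U_\Phi \cap U_\Psi$ the two local enumerations of the $n$ points differ by a permutation $\sigma \in \Sigma_n$ that varies locally-constantly, and this $\sigma$ need not be even. One must check that the transition function this induces on the fibre $\geomr{\inj([\bullet+1],[n])}$ — namely post-composition of injections by $\sigma$ — is well-defined and continuous, which it is, since $\Sigma_n$ acts on $\inj([\bullet+1],[n])$ by a semi-simplicial automorphism (it commutes with the face maps, which only reorder/delete the \emph{source} $[i+1]$) and hence on $\geomr{\inj([\bullet+1],[n])}$ by a homeomorphism. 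The orientation datum is carried along but plays no role in the fibre: it is a global $\mathbb{Z}_2$-torsor on the base, not on the fibre. So the bundle structure is genuinely there; one just has to be careful that "fibre $\cong \geomr{\inj([\bullet+1],[n])}$" is a statement about the \emph{unordered} underlying set of points, and that all transition functions land in the image of $\Sigma_n \to \mathrm{Homeo}(\geomr{\inj([\bullet+1],[n])})$. With that checked, the lemma follows.
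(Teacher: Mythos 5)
Your argument is essentially the paper's own proof: both cover $C_n^+(M)$ by open sets of configurations having one point in each of $n$ pairwise disjoint open neighbourhoods, use the resulting local enumeration of the points by $[n]$ to trivialise each level $C_n^+(M)^i \to C_n^+(M)$ simultaneously and compatibly with the face maps, and then pass to geometric realisations. Your closing discussion of transition functions is correct but not needed for the statement (local trivialisations alone make $\varepsilon_n$ a fibre bundle), and indeed the paper omits it.
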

\begin{proof}
For each level $i\geq 0$, the (unique) composition of face maps $f_i \colon C_n^+ (M)^i \to C_n^+ (M)$ is a finite-sheeted covering map, so in particular it is a fibre bundle. Moreover, this collection can be \emph{simultaneously} locally trivialised: each point $c\in C_n^+ (M)$ has an open neighbourhood $U_c$ over which $f_i$ is a trivial bundle for all $i$. Explicitly, we may take $U_c$ to be the following: Choose pairwise disjoint open balls around the $n$ points of $c$, and associate to these open balls the orientation and $X$-labelling inherited from $c$. Then let $U_c$ be all configurations in $C_n^+ (M)$ which have one point in each open ball, and whose orientation and $X$-labelling matches that of the open balls.

Over $U_c$, the trivialisation itself can be described as follows: Choose an arbitrary, fixed ordering of the $n$ open balls, $(B_1, ..., B_n)$. Given $a\in f_i^{-1} (U_c)$, the $(i+1)$-ordering of $a$ induces an injection $[i+1] \to \lbrace B_1, ..., B_n \rbrace$, and hence an element $\mathrm{ord}(a) \in \inj ([i+1],[n])$. Define the trivialisation $f_i^{-1} (U_c) \cong U_c \times \inj ([i+1],[n])$ to be $a \mapsto (f_i (a), \mathrm{ord}(a))$.

Since we have a simultaneous local trivialisation for $\lbrace f_i \rbrace$, we get a local trivialisation for the map $\coprod_i C_n^+ (M)^i \times \Delta^i \to C_n^+ (M)$, which identifies the preimage of $U_c$ with $U_c \times \bigl( \coprod_i \inj ([i+1],[n]) \times \Delta^i \bigr)$. Under this identification, the face relations for $C_n^+ (M)^\bullet$ correspond exactly to the face relations for $\inj ([\bullet +1],[n])$, since
\begin{center}
\begin{tikzpicture}
[x=1.7mm,y=1.2mm,>=stealth']
\node (a) at (0,10) {$U_c \times \inj ([i+1],[n])$};
\node (b) at (20,10) {$f_i^{-1} (U_c)$};
\node (c) at (0,0) {$U_c \times \inj ([i],[n])$};
\node (d) at (20,0) {$f_{i-1}^{-1} (U_c)$};
\draw [->] (b) to node[right,font=\small]{$d_j$} (d);
\draw [->] (a) to node[left,font=\small]{$1 \!\times\! d_j$} (c);
\node at ($ (a.east) !0.5! (b.west) $) {$\cong$};
\node at ($ (c.east) !0.5! (d.west) $) {$\cong$};
\node at (10,5) {$\circlearrowleft$};
\end{tikzpicture}
\end{center}
Hence we have an induced local trivialisation of the quotient map
\begin{equation*}
\varepsilon_n \colon \geomr{C_n^+ (M)^\bullet} = \Bigl( \coprod_i C_n^+ (M)^i \times \Delta^i \Bigr) \rightquotient \sim \; \longrightarrow \; C_n^+ (M),
\end{equation*}
which identifies the preimage of $U_c$ with $U_c \times \geomr{\inj ([\bullet +1],[n])}$. In particular the fibre over a point is identified with $\geomr{\inj ([\bullet +1],[n])}$.
\end{proof}
\noindent The homotopy type of $\geomr{\inj ([\bullet +1],[n])}$ was identified by Randal-Williams in \cite{Randal:Williams2011}:\footnote{As noted there, this fact has been proved before in the literature, where $\inj ([\bullet +1],[n])$ is known as the ``complex of injective words''.}
\begin{prop}[Proposition 3.2 of \cite{Randal:Williams2011}]
The geometric realisation of the $\Delta$-space ${\inj ([\bullet +1],[n])}$ is a wedge of $(n-1)$-spheres\textup{:}
\begin{equation*}
\geomr{\inj ([\bullet +1],[n])} \simeq \bigvee S^{n-1}.
\end{equation*}
\end{prop}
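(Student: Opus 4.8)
The plan is to identify $K_n \coloneqq \geomr{\inj([\bullet+1],[n])}$ with the homotopy type of the classical \emph{complex of injective words}; no configuration-space input is needed, only combinatorics. First I would record that $K_n$ is a regular CW complex whose $k$-cells are the injective words of length $k+1$ in $\{1,\dots,n\}$, with the evident attaching maps, and that it is \emph{pure} of dimension $n-1$ since every maximal word is a permutation of $[n]$. Because a regular CW complex of this kind is homotopy equivalent to a wedge of $(n-1)$-spheres precisely when it is $(n-2)$-connected with reduced homology free and supported in degree $n-1$, that is the statement to establish.

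The main tool I would use is discrete Morse theory, equivalently shellability of regular CW complexes (this is the Björner--Wachs approach to the complex of injective words). The plan is to produce an acyclic matching on the face poset of $K_n$ --- a partial pairing of cells, each with a codimension-one face or coface, such that reversing the matched edges of the Hasse diagram produces no directed cycle --- whose critical (unmatched) cells are a single vertex together with, in the top dimension $n-1$, exactly the permutation words that are derangements of $[n]$ (for $n=1$ this degenerates to a single point). Forman's theorem then produces a CW complex with those cells homotopy equivalent to $K_n$, hence $K_n \simeq \bigvee_{D_n} S^{n-1}$ with $D_n$ the number of derangements. The matching is built by an inductive rule on the alphabet $1,2,\dots,n$: crudely, one repeatedly pairs a word either with the word obtained by prepending the least available letter or, dually, by deleting such a letter from the front, peeling off a fixed prefix and recursing on the remainder; arranging this so that precisely the derangement words survive, and verifying acyclicity, is the technical heart and the only real obstacle.

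For the count I would use the reduced Euler characteristic: counting cells gives $\widetilde{\chi}(K_n)=\sum_{k=1}^{n}(-1)^{k-1}\tfrac{n!}{(n-k)!}-1=(-1)^{n-1}D_n$, so once $K_n$ is known to be a wedge of $(n-1)$-spheres their number must be $D_n$. If one only needs the connectivity (which is all that is used downstream, to conclude that $\varepsilon_n$ is $(n-1)$-connected), a lighter inductive argument suffices: the words avoiding the letter $n$ form a subcomplex isomorphic to $K_{n-1}$, and the remaining words --- those containing $n$ --- can be attached after an intermediate subcomplex that is a cone on $K_{n-1}$ (namely the words in which $n$, if present, occurs last), hence contractible; analysing the relative cells together with the inductive connectivity bound for $K_{n-1}$ then shows $K_n$ is $(n-2)$-connected. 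In every version the topology is formal once the combinatorial bookkeeping is in place; alternatively, since this is exactly the complex of injective words, one may simply cite the shellability results referenced in the footnote.
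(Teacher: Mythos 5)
The paper does not prove this proposition at all: it is quoted verbatim from \cite{Randal:Williams2011} (Proposition 3.2 there), with a footnote observing that the statement is classical for the ``complex of injective words''. So any proof you give is by definition a different route from the paper's. Your main strategy --- discrete Morse theory, equivalently the Bj\"orner--Wachs shellability of the complex of injective words --- is the standard combinatorial proof and is sound in outline: the realisation of $\inj([\bullet+1],[n])$ is indeed a regular CW complex, pure of dimension $n-1$, and your reduced Euler characteristic computation correctly yields $(-1)^{n-1}D_n$, so once the homotopy type is known to be a wedge of $(n-1)$-spheres the count of spheres is forced. Your ``lighter'' alternative is in fact closer in spirit to Randal-Williams's own argument, which is an induction on $n$ using exactly the filtration you describe: the subcomplex of words omitting $n$ sits inside the contractible cone of words in which $n$ occurs only in the last position, and one analyses the remaining cells. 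And you are right that only the connectivity statement is used downstream (Corollaries \ref{coro:n:minus:1:resolution} and \ref{coro:converging:to:zero} need only that the fibre of $\varepsilon_n$ is $(n-2)$-connected).

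The one substantive caveat: as written, the acyclic matching --- the entire content of the Morse-theoretic route --- is described only as ``pairing a word with the word obtained by prepending or deleting the least available letter, arranged so that the derangements survive''. Constructing such a matching and, especially, verifying acyclicity of the reversed Hasse diagram is genuinely delicate (naive prepend/delete rules do create directed cycles), and you correctly flag this as the technical heart without carrying it out. So the proposal is a correct and well-chosen strategy with its crux deferred to the literature; since the paper itself only cites, that deferral is acceptable here, but a self-contained proof would require either the explicit matching or a full execution of the inductive cone argument.
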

\noindent Putting this together, we immediately get:
\begin{coro}\label{coro:n:minus:1:resolution}
The map $\varepsilon_n \colon \geomr{C_n^+ (M)^\bullet} \to C_n^+ (M)$ is $(n-1)$-connected; in other words $C_n^+ (M)^\bullet$ is an $(n-1)$-resolution of $C_n^+ (M)$.
\end{coro}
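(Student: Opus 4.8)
The plan is to combine the fibre bundle structure of $\varepsilon_n$ established in the preceding lemma with the computation of the homotopy type of the fibre due to Randal-Williams. Since $\varepsilon_n$ is a fibre bundle with fibre $\geomr{\inj([\bullet+1],[n])}$, and the preceding proposition identifies this fibre (up to homotopy equivalence) with a wedge of $(n-1)$-spheres, the fibre is in particular $(n-2)$-connected: a wedge of $(n-1)$-spheres is path-connected (for $n\geq 2$), simply-connected (for $n\geq 3$), and has trivial reduced homology below degree $n-1$, so by the Hurewicz theorem it is $(n-2)$-connected.

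The key step is then the standard fact that a fibration with $(n-2)$-connected fibre has $(n-1)$-connected total-space-to-base map. More precisely, I would invoke the long exact sequence of homotopy groups of the fibration $\geomr{\inj([\bullet+1],[n])} \to \geomr{C_n^+(M)^\bullet} \xrightarrow{\varepsilon_n} C_n^+(M)$: from $\pi_k(\text{fibre}) \to \pi_k(\text{total}) \xrightarrow{(\varepsilon_n)_*} \pi_k(\text{base}) \to \pi_{k-1}(\text{fibre})$ one reads off that $(\varepsilon_n)_*$ is an isomorphism for $k \leq n-2$ and surjective for $k = n-1$, which is exactly the statement that $\varepsilon_n$ is $(n-1)$-connected. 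One should be slightly careful about basepoints and about the low-degree edge cases ($n = 0, 1, 2$), but these follow directly from connectivity of the spaces involved (all the configuration spaces here are path-connected since $M$ and $X$ are path-connected and $\dim M \geq 2$), so the statement holds vacuously or trivially there. The final sentence of the corollary is then just the definition of an $(n-1)$-resolution recalled earlier in the section.

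I expect no serious obstacle here — all the real work has been done in the two preceding results (the local triviality argument for the bundle structure, and the complex-of-injective-words computation). The only mild subtlety is making sure the translation "wedge of $(n-1)$-spheres $\Rightarrow$ $(n-2)$-connected $\Rightarrow$ $\varepsilon_n$ is $(n-1)$-connected" is stated cleanly, including the possibly-empty-wedge degenerate cases, but this is routine.
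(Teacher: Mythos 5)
Your proposal is correct and is exactly the argument the paper intends: the paper simply says ``putting this together, we immediately get'' the corollary from the fibre-bundle lemma and the wedge-of-$(n-1)$-spheres computation, and the long-exact-sequence-of-the-fibration reasoning you spell out is the implicit content of that step.
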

\noindent By the relative Hurewicz theorem and a diagram chase in \eqref{eqn:double:mapping:cone}, this in turn immediately implies that $\widetilde{H}_* C\widetilde{\varepsilon}_n = 0$ for $*\leq n$, and hence
\begin{coro}\label{coro:converging:to:zero}
The spectral sequence \eqref{eqn:DSS} converges to zero in total degree $\leq n-1$.
\end{coro}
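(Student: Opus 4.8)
The plan is first to identify what \eqref{eqn:DSS} abuts to, then to show that abutment vanishes in a range, and finally to read off the statement about $E^\infty$. By Proposition~\ref{prop:the:two:spectral:sequences}, \eqref{eqn:DSS} abuts to $\widetilde{H}_{*+1}C\widetilde{\varepsilon}_n$, where $C\widetilde{\varepsilon}_n$ occupies the bottom-right corner of the grid \eqref{eqn:double:mapping:cone}. Since $C\widetilde{\varepsilon}_n$ is by construction the double mapping cone of the upper-left square of \eqref{eqn:double:mapping:cone}, it is simultaneously the mapping cone of $\widetilde{\varepsilon}_n$ (computed along the right-hand column) and the mapping cone of the induced map $C\varepsilon_n \to C\varepsilon_{n+1}$ (computed along the bottom row); in particular the bottom row of \eqref{eqn:double:mapping:cone} is a cofibration sequence, which I will use below.

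First I would feed in Corollary~\ref{coro:n:minus:1:resolution}: the map $\varepsilon_n$ is $(n-1)$-connected, so the map $\varepsilon_{n+1}$ is $n$-connected. A $k$-connected map has mapping cone with vanishing reduced homology in all degrees $\leq k$ — this is the form of the relative Hurewicz theorem we need (equivalently, replace the map up to homotopy by the inclusion of a subcomplex with relative cells only in dimensions $> k$). Hence $\widetilde{H}_* C\varepsilon_n = 0$ for $* \leq n-1$ and $\widetilde{H}_* C\varepsilon_{n+1} = 0$ for $* \leq n$.

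Next I would run the long exact sequence in reduced homology of the cofibration sequence $C\varepsilon_n \to C\varepsilon_{n+1} \to C\widetilde{\varepsilon}_n$ from the bottom row of \eqref{eqn:double:mapping:cone}:
\begin{equation*}
\cdots \longrightarrow \widetilde{H}_* C\varepsilon_{n+1} \longrightarrow \widetilde{H}_* C\widetilde{\varepsilon}_n \longrightarrow \widetilde{H}_{*-1} C\varepsilon_n \longrightarrow \cdots .
\end{equation*}
For $* \leq n$ the left-hand group vanishes (because $* \leq n$) and the right-hand group vanishes (because $*-1 \leq n-1$), so $\widetilde{H}_* C\widetilde{\varepsilon}_n = 0$ for $* \leq n$. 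Finally, since \eqref{eqn:DSS} abuts to $\widetilde{H}_{*+1}C\widetilde{\varepsilon}_n$, any term $E^\infty_{s,t}$ in total degree $d = s+t \leq n-1$ is a subquotient of $\widetilde{H}_{d+1}C\widetilde{\varepsilon}_n$, which is zero since $d+1 \leq n$. Thus $E^\infty_{s,t} = 0$ whenever $s+t \leq n-1$; that is, \eqref{eqn:DSS} converges to zero in total degree $\leq n-1$.

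I do not expect a substantial obstacle: every step is either a long exact sequence of a cofibration or elementary connectivity bookkeeping. The only points that need care are the off-by-one shifts — the degree shift $\widetilde{H}_{*+1}$ in the abutment of \eqref{eqn:DSS}, and the two distinct connectivities of $\varepsilon_n$ and $\varepsilon_{n+1}$ — together with the observation (made in the first paragraph) that the bottom row of \eqref{eqn:double:mapping:cone} genuinely is a cofibre sequence, which is immediate from the double-mapping-cone description of $C\widetilde{\varepsilon}_n$.
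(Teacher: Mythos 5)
Your proof is correct and follows essentially the same route as the paper: Corollary \ref{coro:n:minus:1:resolution} plus the relative Hurewicz theorem give the vanishing of $\widetilde{H}_*C\varepsilon_n$ and $\widetilde{H}_*C\varepsilon_{n+1}$ in the appropriate ranges, and the paper's ``diagram chase in \eqref{eqn:double:mapping:cone}'' is exactly your long exact sequence of the bottom cofibration row, yielding $\widetilde{H}_*C\widetilde{\varepsilon}_n=0$ for $*\leq n$ and hence the claim after the degree shift in the abutment. The off-by-one bookkeeping is all handled correctly.
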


\section{The connectivity of the unpuncturing map}\label{sec:the:connectivity:of:the:unpuncturing:map}

In this section we relate the homology-connectivity of the relative unpuncturing map
\begin{align*}
\widetilde{u}_n &\colon R_n^+ (M_1) \longrightarrow R_n^+ (M) \\
\intertext{(which was defined in \S \ref{subsubsec:relative:maps:1}) to the homology-connectivity of the stabilisation map}
s_{n-1} &\colon C_{n-1}^+ (M) \longrightarrow C_n^+ (M).
\end{align*}
First, we define precisely what we mean by `homology-connectivity':
\begin{defn}
For a map $f\colon Y \to Z$, the \emph{homology-connectivity} of $f$ is
\begin{equation*}
\hconn (f) \coloneqq \mathrm{max} \Bigl\lbrace * \Bigl\lvert \begin{smallmatrix} f \text{\itshape\ is surjective on homology up to degree } * \\ f \text{\itshape\ is injective on homology up to degree } *-1 \end{smallmatrix} \Bigr. \Bigr\rbrace .
\end{equation*}
Equivalently, this is the degree up to which the reduced homology of the mapping cone $Cf$ is zero.
\end{defn}
\begin{prop}\label{prop:excision}
For $n\geq 3$,
\begin{equation*}
\hconn (\widetilde{u}_n) \geq \hconn (s_{n-1}) + \mathrm{dim} (M).
\end{equation*}
\end{prop}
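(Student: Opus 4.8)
The plan is to compare the pair $(R_n^+(M), \widetilde{u}_n(R_n^+(M_1)))$ — or rather the map $\widetilde{u}_n$ itself — with the stabilisation map one dimension lower, using excision together with the fact that removing a puncture near $B_0$ is, up to homeomorphism pushing the collar back in, the same as removing a small open disc. First I would reinterpret the relative unpuncturing map geometrically: $\widetilde{u}_n$ is built from $u_n\colon C_n^+(M_1)\hookrightarrow C_n^+(M)$ with the constant homotopy filling the square with $s$. Because $u_n$ is the inclusion induced by $M_1\hookrightarrow M$, the mapping cone $C\widetilde{u}_n$ is (up to homotopy) the double mapping cone of the square
\begin{equation*}
\begin{array}{ccc}
C_n^+(M_1) & \xrightarrow{\ s\ } & C_{n+1}^+(M_1)\\
\downarrow u_n & & \downarrow u_{n+1}\\
C_n^+(M) & \xrightarrow{\ s\ } & C_{n+1}^+(M).
\end{array}
\end{equation*}
Thus $\widetilde{H}_*(C\widetilde{u}_n)$ is computed by the mapping cone of $Cu_n \to Cu_{n+1}$, so it suffices to estimate the connectivity of $Cu_n\to Cu_{n+1}$, i.e.\ to show that this map is $(\hconn(s_{n-1})+\dim M+1)$-connected on homology — equivalently that the relevant relative groups vanish in a range.

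The key step is to identify $Cu_n$, the mapping cone of $C_n^+(M_1)\hookrightarrow C_n^+(M)$, by excision. Write $M = M_1 \cup D$ where $D$ is a small open disc near $B_0$ whose closure is a closed $d$-disc ($d=\dim M$). A configuration of $n$ points in $M$ either avoids $D$ — landing in $C_n^+(M_1)$ — or has exactly one point in $D$ (after shrinking $D$, a neighbourhood-of-$B_0$ argument lets us assume genericity, but really one uses a tubular-neighbourhood/deformation-retraction argument as in the unordered case). The subspace $C_n^+(M_1)$ is a cofibration, and $C_n^+(M)$ deformation retracts onto $C_n^+(M_1)$ union the locus where the $n$-th point lies in $\bar D$. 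An excision/Künneth computation then identifies $\widetilde{H}_*(Cu_n) = H_*(C_n^+(M),C_n^+(M_1))$ with the homology of a Thom-space-like object: it is $\widetilde{H}_{*-d}$ of (a space built from) $C_{n-1}^+(M_1)$ with labels — concretely $H_{*-d}$ of the total space of the disc bundle rel sphere bundle over $C_{n-1}^+(M_1)\times X$ coming from the "where does the extra point sit in $D$" coordinate, which by Thom isomorphism (the disc bundle is trivial, or at worst orientable since we are over an oriented base) gives $\widetilde{H}_{*-d}(C_{n-1}^+(M_1)\times X_+) \cong \widetilde H_{*-d}\big(\Sigma^d_{\mathrm{fib}}(\dots)\big)$. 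The upshot is an identification $\widetilde{H}_*(Cu_n)\cong H_{*-d}(C_{n-1}^+(M_1);\text{something})$, natural in $n$, and under this identification the map $Cu_n\to Cu_{n+1}$ corresponds (up to sign and the $\nu$-twist discussed in Remark~\ref{rmk:s:and:minus:s:difference}) to $\Sigma^d$ of the stabilisation map $s_{n-2}\colon C_{n-2}^+(M_1)\to C_{n-1}^+(M_1)$, or more precisely to $s_{n-1}$ after also unpuncturing — the bookkeeping here is the delicate part.

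The remaining step is purely formal: from the identification above, $\hconn(Cu_n\to Cu_{n+1}) = \hconn(s_{\bullet}) + d$ for the appropriate index $\bullet$, and chasing the long exact sequences of the pairs $(R_n^+(M), \cdot)$ — i.e.\ using that $C\widetilde u_n$ sits in a cofibre sequence with $Cu_n$ and $Cu_{n+1}$ — converts this into $\hconn(\widetilde{u}_n)\ge \hconn(s_{n-1}) + d$. The hypothesis $n\ge 3$ is exactly what is needed for the fibre-identification of $\pi_{n,0}$ (and hence for the disc to contain a single point whose complement $C_{n-1}^+(M_1)$ is connected) to behave well, matching the $i\le n-3$ hypothesis in Proposition~\ref{prop:the:two:spectral:sequences}. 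The main obstacle I expect is the excision/Thom-isomorphism step: making the deformation retraction of $C_n^+(M)$ onto "configurations avoiding $D$, or with one marked point in $\bar D$" precise, checking the bundle over $C_{n-1}^+(M_1)\times X$ is orientable so Thom applies with $\bbZ$-coefficients, and — most fiddly — tracking how the orientation datum of the configuration (the global $A_n/A_{n-1}$ parameter) interacts with the Thom class, which is presumably where a sign $(-1)^{\text{something}}$ enters, exactly as flagged in Remark~\ref{rmk:factorisation:of:sn}. Once that identification is nailed down, everything else is a diagram chase.
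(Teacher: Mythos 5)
Your strategy is essentially the paper's: excise along the locus of configurations meeting a small embedded disc $D$, identify the cofibre of $u_n$ with a $d$-fold suspension ($d=\dim M$) of a configuration space with one fewer point, and use naturality with respect to $s$ to identify $C\widetilde{u}_n$ with the cofibre of a suspended stabilisation map. But two points in your sketch need repair, and the first is exactly at the step you yourself flag as delicate. You place $D$ \emph{near} $B_0$; the paper places it \emph{far from} $B_0$, and this is not cosmetic. The stabilisation map is supported in a collar of $B_0$, so the locus ``configurations with a point in $D$'' and its product decomposition commute with $s$ only when $D$ is disjoint from that support; with $D$ inside the collar, $s$ moves points into and through $D$ and the map $Cu_n\to Cu_{n+1}$ is not identified with a suspension of $s_{n-1}$ in any direct way. (Since $M$ is connected, $M_1$ is homeomorphic to $M$ minus \emph{any} point, so one is free to take the puncture far from $B_0$; this is how the paper resolves the bookkeeping you defer.) Second, the Thom-isomorphism and orientability worries are red herrings: the subspace $U_n^+(M)$ of configurations having a unique closest point of $D$ to its centre is a genuine product $C_{n-1}^+(\minus{M}{0})\times D\times X$ (Lemma \ref{lem:decomposition:of:Un}), obtained by radially rescaling the complement of the distinguished point, and the cofibre of $A\times(\minus{D}{0})\times X\hookrightarrow A\times D\times X$ is then $\Sigma^d(A\times X)_+$ by inspection --- no Thom class, no sign. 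The hypothesis $n\geq 3$ enters precisely in that lemma: one needs an \emph{even} permutation to move the distinguished point into the last slot so that the decomposition is well defined on oriented configurations; it is not about the connectivity of the fibre of $\pi_{n,0}$ as you suggest. With these repairs your argument closes up as in the paper: excision gives $\widetilde{H}_*C\widetilde{u}_n\cong\widetilde{H}_*C\Sigma^d(s_{n-1}\times 1)_+$, and the suspension isomorphism together with the K\"unneth theorem give the stated inequality.
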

\noindent To prove this we will first construct an excisive square: Let $d = \mathrm{dim} (M)$, and let $D \subset M$ be an open, $d$-dimensional disc embedded in the interior of $M$, far away from the boundary-component $B_0$ of \mbar. We identify $D$ with the standard $d$-dimensional disc with its metric. Let $U_n^+ (M) \subseteq C_n^+ (M)$ be the subspace of configurations which have a unique closest point in $D$ to $0\in D$. (In particular configurations in $U_n^+ (M)$ are required to \emph{have} a point in $D$.) The pair $\lbrace U_n^+ (M), C_n^+ (\minus{M}{0}) \rbrace$ is an open cover of $C_n^+ (M)$, so the square
\begin{equation}\label{eq:excisive:square}
\centering
\begin{split}
\begin{tikzpicture}
[x=1mm,y=1mm,>=stealth',scale=2]
\node (TL) at (0,0) {$C_n^+ (\minus{M}{0})$};
\node (TR) at (20,0) {$C_n^+ (M)$};
\node (BL) at (10,-5) {$U_n^+ (\minus{M}{0})$};
\node (BR) at (30,-5) {$U_n^+ (M)$};
\incl{(BL)}{(BR)}
\incl[left]{(BL)}{(TL)}
\incl[left]{(BR)}{(TR)}
\inclusion{above}{$u_n$}{(TL)}{(TR)}
\end{tikzpicture}
\end{split}
\end{equation}
is excisive.

Now, $U_n^+ (M)$ may be decomposed as follows:
\begin{lem}\label{lem:decomposition:of:Un}
For $n\geq 3$, $U_n^+ (M) \cong C_{n-1}^+ (\minus{M}{0}) \times D \times X$.
\end{lem}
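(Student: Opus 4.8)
The plan is to make the homeomorphism explicit using the special structure of configurations in $U_n^+ (M)$. By definition, a configuration in $U_n^+ (M)$ has a unique closest point in $D$ to the origin $0 \in D$; call this point the \emph{distinguished} point. First I would separate out the data carried by the distinguished point: it consists of a location in $D$ and a label in $X$, giving the factors $D$ and $X$. What remains is the configuration obtained by deleting the distinguished point, which lives in $M$ minus that point. The key observation is that, since the distinguished point lies in the embedded disc $D$ (which is far from $B_0$), we may use a canonical isotopy to slide it to the centre $0 \in D$; this identifies $\minus{M}{(\text{distinguished point})}$ with $\minus{M}{0}$ in a way that varies continuously with the configuration. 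Thus, forgetting orientations, there is a natural map $U_n^+ (M) \to C_{n-1}(\minus{M}{0}) \times D \times X$.

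The main subtlety — and the step I expect to be the real obstacle — is keeping track of the \emph{orientation} (the ordering up to even permutations) coherently through this decomposition. A point of $U_n^+ (M)$ carries an orientation of all $n$ points; I want to convert this into an orientation of the remaining $n-1$ points (so that the first factor is $C_{n-1}^+(\minus{M}{0})$ rather than merely $C_{n-1}(\minus{M}{0})$). The natural recipe is: among all orderings representing the given orientation, take those in which the distinguished point appears \emph{last}; deleting it then gives a well-defined orientation on the other $n-1$ points, because two such orderings differing by an even permutation fixing the last slot restrict to two orderings of the first $n-1$ slots differing by an even permutation. One must check this recipe is well-defined and continuous, and that it is reversible: given an orientation of the $n-1$ non-distinguished points together with the distinguished point's location and label, append the distinguished point in the last slot to recover an orientation of all $n$. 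This is where the hypothesis $n \geq 3$ enters — one needs enough points that this bookkeeping is unambiguous and, more importantly, the target $C_{n-1}^+(\minus{M}{0})$ should be a genuine oriented configuration space (the orientation data is only meaningful for $n - 1 \geq 2$, i.e. $n \geq 3$).

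Concretely, the map I would write down is
\begin{equation*}
U_n^+ (M) \longrightarrow C_{n-1}^+ (\minus{M}{0}) \times D \times X, \qquad \left[ \pair{p_1}{x_1} \cdots \pair{p_n}{x_n} \right] \longmapsto \Bigl( \left[ \pair{\bar p_1}{x_1} \cdots \pair{\widehat{\pair{p_k}{x_k}}} \cdots \pair{\bar p_n}{x_n} \right], p_k, x_k \Bigr),
\end{equation*}
where $\pair{p_k}{x_k}$ is the distinguished column, the hat denotes omission, the orientation on the $n-1$ remaining columns is the one induced by placing $\pair{p_k}{x_k}$ last (as above), and $\bar p_i$ denotes $p_i$ transported along the canonical isotopy that slides $p_k$ to $0$. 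I would then exhibit the inverse map directly — insert a point at the prescribed location in $D$ with the prescribed label, undo the sliding isotopy, and append this point in the final slot of an ordering representing the given orientation — and check that the two composites are the identity. Both maps are manifestly continuous (continuity of ``the unique closest point in $D$'' on $U_n^+(M)$, which is exactly why we restricted to that subspace, and continuity of the sliding isotopy in the disc parameter), so this establishes the homeomorphism. All of this parallels the construction of the Fadell–Neuwirth-type fibre bundles $\pi_{n,i}$ from \S\ref{subsubsec:pn:and:un}, with the role of the ``ordered points'' played here by the single distinguished point pinned down by the geometry of $D$.
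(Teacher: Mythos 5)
Your decomposition of the data (remaining points, position of the distinguished point, its label) and your orientation bookkeeping (represent the orientation by an ordering with the distinguished point last, then delete it) are both sound, and they match the paper. The gap is in your identification of $M$ minus the distinguished point with $\minus{M}{0}$. You propose an ambient isotopy sliding the distinguished point to the origin, i.e.\ a continuous family of homeomorphisms $h_p\colon M\to M$ supported in $D$ with $h_p(p)=0$, and you transport the other $n-1$ points by $h_{p_k}$. The resulting map is well defined and injective, but it is \emph{not} surjective onto $C_{n-1}^+ (\minus{M}{0}) \times D \times X$, and your proposed inverse does not land in $U_n^+(M)$: given $(c,p,x)$, the configuration $h_p^{-1}(c)$ may contain points of $D$ strictly closer to $0$ than $p$ is (for instance, if $c$ has a point $q$ very near $0$ on the far side of $0$ from $p$, then $h_p^{-1}(q)$ sits just inside $p$), so after inserting $\pair{p}{x}$ the point $p$ need not be the unique closest point of the new configuration to $0$. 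Nothing in the sliding construction controls the radial coordinate of the transported points, and that radial control is exactly what defines $U_n^+(M)$.

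The paper's proof avoids this by never moving the distinguished point: it chooses a continuous family of homeomorphisms $\psi_r\colon \minus{M}{\bbar_r(0)} \cong \minus{M}{0}$ supported in $D$ (a radial expansion of the complement of the closed $r$-ball onto the complement of the origin) and applies $\psi_{\lvert p_k\rvert}$ to the \emph{other} $n-1$ points. The ``unique closest point'' condition guarantees those points all lie in $\minus{M}{\bbar_{\lvert p_k\rvert}(0)}$, so $\psi_{\lvert p_k\rvert}$ is defined on them; conversely, $\psi_{\lvert p\rvert}^{-1}$ pushes an arbitrary configuration in $\minus{M}{0}$ entirely outside $\bbar_{\lvert p\rvert}(0)$, so inserting $\pair{p}{x}$ automatically yields an element of $U_n^+(M)$ whose unique closest point to $0$ is $p$. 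With that replacement your argument goes through. A minor further point: the role of $n\geq 3$ is that $A_n$ then acts transitively on the $n$ slots, so some representative ordering has the distinguished point last (for $n=2$ there may be none); it is not that orientation data is ``only meaningful'' for at least two remaining points.
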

\begin{proof}
First, choose a family of homeomorphisms $\psi_r \colon \minus{M}{\bbar_r (0)} \cong \minus{M}{0}$, with support contained in $D$, depending continuously on the parameter $r\in [0,1)$. Here, $\bbar_r (0)$ means the closed ball in $D$, of radius $r$ centred at $0\in D$.

Given $\left[ \pair{p_1}{x_1} \cdots \pair{p_n}{x_n} \right] \in U_n^+ (M)$, we may assume by applying an even permutation (since $n\geq 3$) that the unique closest point in $D$ to $0$ for this configuration is $p_n$. Sending this to
\begin{equation*}
\left( \left[ \pair{\psi_{\lvert p_n \rvert} (p_1)}{x_1} \;\cdots\; \pair{\psi_{\lvert p_n \rvert} (p_{n-1})}{x_{n-1}} \right], p_n, x_n \right) \in C_n^+ (\minus{M}{0}) \times D \times X
\end{equation*}
defines the required homeomorphism.
\end{proof}
This identification restricts to $U_n^+ (\minus{M}{0}) \cong C_{n-1}^+ (\minus{M}{0}) \times (\minus{D}{0}) \times X$, and under the identification,
\begin{itemize}
\item the inclusion at the bottom of \eqref{eq:excisive:square} is the identity on the first and third factors, and the inclusion $\minus{D}{0} \hookrightarrow D$ on the middle factor;
\item restricting the stabilisation map $s_n \colon C_n^+ (M) \to C_{n+1}^+ (M)$ to $U_n^+ (M) \to U_{n+1}^+ (M)$ yields
\begin{equation*}
s_{n-1} \times 1 \times 1 \colon C_{n-1}^+ (\minus{M}{0}) \times D \times X \longrightarrow C_n^+ (\minus{M}{0}) \times D \times X,
\end{equation*}
and similarly for $s_n \colon C_n^+ (\minus{M}{0}) \to C_{n+1}^+ (\minus{M}{0})$. In other words the identification commutes with stabilisation maps; this is because we embedded $D$ far away from the boundary-component $B_0$. (More precisely, it is ensured by embedding $D$ sufficiently far away from $B_0$ so that the homeomorphism $\phi\colon M^\prime \cong M$ from the definition of the stabilisation map has support disjoint from $D$.)
\end{itemize}
Having done this set-up, we can now prove the main result of this section:
\begin{proof}[Proof of Proposition \ref{prop:excision}]
Apply stabilisation maps vertically to the square \eqref{eq:excisive:square}, to get a commuting cube of maps, and then take mapping cones horizontally and vertically, to produce a commutative lattice of maps of the form
\raisebox{0pt}[0pt][0pt]{%
\tikz[x=1mm,y=1mm,scale=0.8,baseline=5mm]{%
\draw [step=5] (0,0) grid (10,10);
\begin{scope}[xshift=2mm,yshift=-2mm] \draw [step=5] (0,0) grid (10,10); \end{scope}
\draw (0,0) -- +(2,-2);
\draw (5,0) -- +(2,-2);
\draw (10,0) -- +(2,-2);
\draw (0,5) -- +(2,-2);
\draw (5,5) -- +(2,-2);
\draw (10,5) -- +(2,-2);
\draw (0,10) -- +(2,-2);
\draw (5,10) -- +(2,-2);
\draw (10,10) -- +(2,-2);
\useasboundingbox (13,0);
}}.
The back face of this can be identified as:
\begin{center}
\begin{tikzpicture}
[x=1.5mm,y=1.2mm,scale=1,>=stealth']
\node (TL) at (0,20) {$R_n^+ (\minus{M}{0})$};
\node (TM) at (20,20) {$R_n^+ (M)$};
\node (TR) at (40,20) {$C\widetilde{u}_n$};
\node (ML) at (0,10) {$C_{n+1}^+ (\minus{M}{0})$};
\node (MM) at (20,10) {$C_{n+1}^+ (M)$};
\node (MR) at (40,10) {$Cu_{n+1}$};
\node (BL) at (0,0) {$C_n^+ (\minus{M}{0})$};
\node (BM) at (20,0) {$C_n^+ (M)$};
\node (BR) at (40,0) {$Cu_n$};
\draw [->] (TL) to node[above,font=\footnotesize]{$\widetilde{u}_n$} (TM);
\draw [>->] (TM) to (TR);
\draw [->] (ML) to node[above,font=\footnotesize]{$u_{n+1}$} (MM);
\draw [>->] (MM) to (MR);
\draw [->] (BL) to node[below,font=\footnotesize]{$u_n$} (BM);
\draw [>->] (BM) to (BR);
\draw [->] (BL) to node[left,font=\footnotesize]{$s_n$} (ML);
\draw [>->] (ML) to (TL);
\draw [->] (BM) to node[right,font=\footnotesize]{$s_n$} (MM);
\draw [>->] (MM) to (TM);
\draw [->] (BR) to (MR);
\draw [>->] (MR) to (TR);
\end{tikzpicture}
\end{center}
Using Lemma \ref{lem:decomposition:of:Un} and the fact that the mapping cone of $A \!\times\! Y \xrightarrow{1 \!\times\! f} A \!\times\! Z$ is $C(1\!\times\! f) \cong (A_+)\wedge Cf$, the front face can be identified as:
\begin{center}
\begin{tikzpicture}
[x=2.3mm,y=1.2mm,scale=1,>=stealth']
\node (TL) at (0,25) {$C(s_{n-1} \!\times\! 1 \!\times\! 1)$};
\node (TM) at (20,25) {$C(s_{n-1} \!\times\! 1 \!\times\! 1)$};
\node (TR) at (40,25) {$C\Sigma^d (s_{n-1} \!\times\! 1)_+$};
\node (ML) at (0,15) {$C_n^+ (\minus{M}{0}) \!\times\! (\minus{D}{0}) \!\times\! X$};
\node (MM) at (20,15) {$C_n^+ (\minus{M}{0}) \!\times\! D \!\times\! X$};
\node (MR) at (40,15) {$\Sigma^d (C_n^+ (\minus{M}{0}) \!\times\! X)_+$};
\node (BL) at (0,0) {$C_{n-1}^+ (\minus{M}{0}) \!\times\! (\minus{D}{0}) \!\times\! X$};
\node (BM) at (20,0) {$C_{n-1}^+ (\minus{M}{0}) \!\times\! D \!\times\! X$};
\node (BR) at (40,0) {$\Sigma^d (C_{n-1}^+ (\minus{M}{0}) \!\times\! X)_+$};
\draw [->] (TL) to (TM);
\draw [>->] (TM) to (TR);
\incl{(ML)}{(MM)}
\draw [>->] (MM) to (MR);
\incl{(BL)}{(BM)}
\draw [>->] (BM) to (BR);
\draw [->] (BL) to node[left,font=\footnotesize]{$s_{n-1}\!\times\! 1 \!\times\! 1$} (ML);
\draw [>->] (ML) to (TL);
\draw [->] (BM) to node[right,font=\footnotesize]{$s_{n-1}\!\times\! 1 \!\times\! 1$} (MM);
\draw [>->] (MM) to (TM);
\draw [->] (BR) to node[right,font=\footnotesize]{$\Sigma^d (s_{n-1}\!\times\! 1)_+$} (MR);
\draw [>->] (MR) to (TR);
\end{tikzpicture}
\end{center}
Now, one way of stating the excision theorem is that the map-on-mapping-cones induced by an excisive square is a homology-equivalence. Hence the homology of the right-hand columns of the two diagrams above is the same; in particular, $\widetilde{H}_* C\widetilde{u}_n \cong \widetilde{H}_* C\Sigma^d (s_{n-1} \!\times\! 1)_+$. So:
\begin{align*}
&&\hconn (\widetilde{u}_n) &= \hconn \bigl( \Sigma^d (s_{n-1} \!\times\! 1)_+ \bigr) && \\
&&&= d+ \hconn (s_{n-1} \!\times\! 1)_+ &&\text{\small\itshape by the suspension isomorphism} \\
&&&= d+ \hconn (s_{n-1} \!\times\! 1) && \\
&&&\geq d+ \hconn (s_{n-1}) &&\text{\small\itshape by the K\"{u}nneth theorem.} \qedhere
\end{align*}
\end{proof}

\section{Proof of the main theorem}\label{sec:proof:of:the:main:theorem}

We now apply the constructions and results of the previous two sections to prove the Main Theorem. This can be rephrased in terms of relative configuration spaces (as defined in \S \ref{subsec:relative:config:spaces}):
\begin{mainthm}
If $M$ is the interior of a connected manifold-with-boundary of dimension at least $2$, and $X$ is a path-connected space, then
\begin{equation}\label{eqn:inductive:hypothesis}
\widetilde{H}_* R_n^+ (M,X) = 0 \quad \text{for} \quad *\leq \tfrac{n-2}{3}.
\end{equation}
\end{mainthm}
%

\subsection{Strategy of the proof}

We defined in \S \ref{subsubsec:relative:stabilisation:maps} the `relative double stabilisation map'
\begin{equation*}
\widetilde{s}^2_{(132)} \colon R_{n-2}^+ (M,X) \longrightarrow R_n^+ (M,X).
\end{equation*}
The proof will be by induction on $n$, and the idea is to show, using the inductive hypothesis, that this map is both \emph{surjective} and the \emph{zero-map} on homology, up to the required degree. We will use completely different factorisations of $\widetilde{s}^2_{(132)}$ for each of these. The first will allow us to prove surjectivity-on-homology \emph{piece by piece}, using different methods for the different pieces of the factorisation, and the second (which only exists on homology) will turn out to factor through the zero map in the required range of degrees.
\begin{proof}[Proof of the Main Theorem, by induction on $n$]
Since $M$ and $X$ are path-connected and $\mathrm{dim} (M)\geq 2$, $C_n^+ (M,X)$ is path-connected for all $n$, and hence so is $R_n^+ (M,X)$. So the theorem is true for $n\leq 4$ --- this is the base case.

Now assume $n\geq 5$. By Lemmas \ref{lem:surjectivity:on:homology} and \ref{lem:zero:on:homology} below, the map
\begin{equation*}
(\widetilde{s}^2_{(132)})_* \colon \widetilde{H}_* R_{n-2}^+ (M,X) \longrightarrow \widetilde{H}_* R_n^+ (M,X)
\end{equation*}
is surjective and zero for $*\leq \frac{n-2}{3}$. Hence $\widetilde{H}_* R_n^+ (M,X) = 0$ in this range.
\end{proof}
\noindent Of course the main content of the proof is contained in the proofs of Lemmas \ref{lem:surjectivity:on:homology} and \ref{lem:zero:on:homology} below. We begin with the one asserting \emph{surjectivity} of $(\widetilde{s}^2_{(132)})_*$ for $*\leq \frac{n-2}{3}$.

\subsection{Surjectivity on homology}

As noted in Remark \ref{rmk:factorisation:of:relative:sn}, $\widetilde{s}^2_{(132)}$ factorises into:
\begin{equation}\label{eqn:6:maps}
\centering
\begin{split}
\begin{tikzpicture}
[x=1mm,y=1mm,>=stealth']
\node (n1) at (0,15) {$R_{n-2}^+ (M)$};
\node (n2) at (30,15) {$R_{n-2}^+ (M_1)$};
\node (n3) at (60,15) {$R_{n-1}^+ (M)^0$};
\node (n4) at (90,15) {$R_{n-1}^+ (M)$};
\node (s1) at (0,0) {$R_{n-1}^+ (M)$};
\node (s2) at (30,0) {$R_{n-1}^+ (M_1)$};
\node (s3) at (60,0) {$R_n^+ (M)^0$};
\node (s4) at (90,0) {$R_n^+ (M)$};
\draw [->] (n1) to node[above,font=\small]{$\widetilde{p}_{n-2}$} (n2);
\draw [->] (n2) to node[above,font=\small]{$\widetilde{\jmath}_{n-1,0}$} (n3);
\draw [->] (n3) to node[above,font=\small]{$\widetilde{a}_{n-1}$} (n4);
\draw [->] (n4) to node[font=\small,fill=white]{$=$} (s1);
\draw [->] (s1) to node[below,font=\small]{$\widetilde{p}_{n-1}$} (s2);
\draw [->] (s2) to node[below,font=\small]{$\widetilde{\jmath}_{n,0}$} (s3);
\draw [->] (s3) to node[below,font=\small]{$\widetilde{a}_n$} (s4);
\end{tikzpicture}
\end{split}
\end{equation}
which is the mapping cone construction applied to
\begin{equation}\label{eqn:6:squares}
\centering
\begin{split}
\begin{tikzpicture}
[x=1.5mm,y=1mm,>=stealth']
\node (n1) at (0,10) [color=gray] {$\cdot$};
\node (n2) at (10,10) [color=gray] {$\cdot$};
\node (n3) at (20,10) [color=gray] {$\cdot$};
\node (n4) at (30,10) [color=gray] {$\cdot$};
\node (n5) at (40,10) [color=gray] {$\cdot$};
\node (n6) at (50,10) [color=gray] {$\cdot$};
\node (n7) at (60,10) [color=gray] {$\cdot$};
\node (s1) at (0,0) [color=gray] {$\cdot$};
\node (s2) at (10,0) [color=gray] {$\cdot$};
\node (s3) at (20,0) [color=gray] {$\cdot$};
\node (s4) at (30,0) [color=gray] {$\cdot$};
\node (s5) at (40,0) [color=gray] {$\cdot$};
\node (s6) at (50,0) [color=gray] {$\cdot$};
\node (s7) at (60,0) [color=gray] {$\cdot$};
\draw [->] (n1) to node[above,font=\footnotesize]{$p_{n-1}$} (n2);
\draw [->] (n2) to node[above,font=\footnotesize]{$j_{n,0}$} (n3);
\draw [->] (n3) to node[above,font=\footnotesize]{$a_n$} (n4);
\draw [->] (n4) to node[above,font=\footnotesize]{$p_n$} (n5);
\draw [->] (n5) to node[above,font=\footnotesize]{$j_{n+1,0}$} (n6);
\draw [->] (n6) to node[above,font=\footnotesize]{$a_{n+1}$} (n7);
\draw [->] (s1) to node[below,font=\footnotesize]{$p_{n-2}$} (s2);
\draw [->] (s2) to node[below,font=\footnotesize]{$j_{n-1,0}$} (s3);
\draw [->] (s3) to node[below,font=\footnotesize]{$a_{n-1}$} (s4);
\draw [->] (s4) to node[below,font=\footnotesize]{$p_{n-1}$} (s5);
\draw [->] (s5) to node[below,font=\footnotesize]{$j_{n,0}$} (s6);
\draw [->] (s6) to node[below,font=\footnotesize]{$a_n$} (s7);
\draw [->] (s1) to (n1);
\draw [->] (s2) to (n2);
\draw [->] (s3) to (n3);
\draw [->] (s4) to (n4);
\draw [->] (s5) to (n5);
\draw [->] (s6) to (n6);
\draw [->] (s7) to (n7);
\node at (5,5) [font=\small] {$(12)_p$};
\node at (15,5) [font=\small] {$1$};
\node at (25,5) [font=\small] {$1$};
\node at (35,5) [font=\small] {$(12)_p$};
\node at (45,5) [font=\small] {$1$};
\node at (55,5) [font=\small] {$1$};
\end{tikzpicture}
\end{split}
\end{equation}
where vertical maps are stabilisation maps. Recall that $p$ punctures the manifold, $j$ replaces the puncture by a new configuration point which is marked as special, and $a$ forgets which point is special. This is the factorisation we will use to show surjectivity-on-homology.
\begin{lem}\label{lem:surjectivity:on:homology}
Let $n\geq 5$, and assume as inductive hypothesis that \eqref{eqn:inductive:hypothesis} holds for smaller values of $n$. Then $\widetilde{s}^2_{(132)}$ is surjective on homology up to degree $\frac{n-2}{3}$.
\end{lem}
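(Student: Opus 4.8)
Here is the plan.

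The approach is to prove surjectivity directly off the six-fold factorisation \eqref{eqn:6:maps} of $\widetilde{s}^2_{(132)}$, by showing separately that each of the six constituent maps $\widetilde{p}_{n-2},\widetilde{\jmath}_{n-1,0},\widetilde{a}_{n-1},\widetilde{p}_{n-1},\widetilde{\jmath}_{n,0},\widetilde{a}_n$ is surjective on $\widetilde{H}_*$ up to degree $\frac{n-2}{3}$; a composite of such surjections is again one, which gives the lemma. Throughout one uses the inductive hypothesis \eqref{eqn:inductive:hypothesis} not only for $M$ but for its punctured versions $M_k$, which are again interiors of connected manifolds-with-boundary of dimension $\geq 2$, so the hypothesis is available for them too. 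Phrased via homology-connectivity it reads $\hconn(s_m)\geq\frac{m-2}{3}$, equivalently $\widetilde{H}_*R_m^+(M_k)=0$ for $*\leq\frac{m-2}{3}$, for all $m<n$. The essential structural point is that the three ``older'' maps $\widetilde{p}_{n-2},\widetilde{\jmath}_{n-1,0},\widetilde{a}_{n-1}$ consume the inductive hypothesis one step further back than the ``newer'' three, and this is exactly what limits the gain in the stability range to one third per inductive step.

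For the two puncturing maps I would use Remark \ref{rmk:up:homotopic:to:id:2}: since $\widetilde{u}_m\circ\widetilde{p}_m\simeq\id$, the map $\widetilde{p}_m$ is an isomorphism on $\widetilde{H}_*$ in every degree where $\widetilde{u}_m$ is, and Proposition \ref{prop:excision} (applicable since $n-2\geq 3$) gives $\hconn(\widetilde{u}_m)\geq\hconn(s_{m-1})+\dim M\geq\frac{m-3}{3}+2$ for $m=n-1,n-2$; hence $\widetilde{p}_m$ is an isomorphism on $\widetilde{H}_*$ up to degree $\frac{m-3}{3}+1\geq\frac{n-2}{3}$. For the two inclusion-of-fibre maps I would use the spectral sequence $(\mathrm{RSSS}_0)$ at levels $n$ and $n-1$, whose vertical edge homomorphism is the map induced by $\widetilde{\jmath}$; on $\widetilde{H}_t$ this factors through the bottom filtration piece $E^\infty_{0,t}\hookrightarrow\widetilde{H}_t(R_m^+(M)^0)$, and so is onto as soon as the higher associated-graded pieces $E^\infty_{s,t-s}$ with $s\geq 1$ vanish. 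Since these are subquotients of $E^2_{s,t-s}=H_s(\widetilde{C}_1(M);\widetilde{H}_{t-s}(R_{m-1}^+(M_1)))$ and $\widetilde{H}_{t-s}(R_{m-1}^+(M_1))=0$ for $t-s\leq\frac{m-3}{3}$ by the inductive hypothesis for $M_1$, they vanish once $t\leq\frac{m-3}{3}+1=\frac{m}{3}$, which for $m=n$ and $m=n-1$ is at least $\frac{n-2}{3}$.

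The delicate step, which I expect to be the main obstacle, is surjectivity of the augmentation maps $\widetilde{a}_n$ and $\widetilde{a}_{n-1}$; here I would feed two layers of spectral sequences into each other. Use $(\Delta\mathrm{SS})$, whose leftmost $d^1$-differential $E^1_{0,t}\to E^1_{-1,t}=\widetilde{H}_t(R_m^+(M))$ is the map induced by $\widetilde{a}_m$; since $E^1_{-2,t}=0$, surjectivity of $\widetilde{a}_m$ on $\widetilde{H}_t$ is equivalent to $E^2_{-1,t}=0$. Now $E^2_{-1,t}$ surjects onto $E^\infty_{-1,t}$, which is $0$ for $t\leq m$ by Corollary \ref{coro:converging:to:zero}, so it suffices to rule out higher differentials landing in the $s=-1$ column, i.e. to show $E^1_{r-1,t-r+1}=\widetilde{H}_{t-r+1}(R_m^+(M)^{r-1})=0$ for every $r\geq 2$ with $t-r+1\geq 0$. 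For these one invokes $(\mathrm{RSSS}_{r-1})$ together with the inductive hypothesis for $M_r$, which gives $\widetilde{H}_*(R_m^+(M)^{r-1})=0$ for $*\leq\frac{m-r-2}{3}$; the tightest constraint is $r=2$, forcing $t\leq\frac{m-1}{3}$. For $m=n-1$ this is exactly $\frac{n-2}{3}$, and the same value reappears (via $\dim M=2$) in the estimate for $\widetilde{p}_{n-2}$ — this is precisely where the slope $\tfrac13$ is pinned down. Assembling the six estimates, each map in \eqref{eqn:6:maps} is surjective on $\widetilde{H}_*$ up to degree $\frac{n-2}{3}$, and hence so is $\widetilde{s}^2_{(132)}$.
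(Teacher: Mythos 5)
Your proposal is correct and follows essentially the same route as the paper: the same six-fold factorisation \eqref{eqn:6:maps}, with $\widetilde{p}_{n-1},\widetilde{p}_{n-2}$ handled via Remark \ref{rmk:up:homotopic:to:id:2} and Proposition \ref{prop:excision}, the maps $\widetilde{\jmath}_{n,0},\widetilde{\jmath}_{n-1,0}$ via the edge homomorphism of \eqref{eqn:RSSS}, and $\widetilde{a}_n,\widetilde{a}_{n-1}$ via \eqref{eqn:DSS} fed by the vanishing ranges of \eqref{eqn:RSSS} and Corollary \ref{coro:converging:to:zero}, with matching numerology throughout (the binding constraints at $\frac{n-2}{3}$ coming from $\widetilde{p}_{n-2}$ and $\widetilde{a}_{n-1}$, exactly as in the paper).
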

\begin{proof}
We will show that the six maps in \eqref{eqn:6:maps} are each surjective on homology up to this degree.

\subsubsection*{The relative puncturing maps $\widetilde{p}_{n-1}$ and $\widetilde{p}_{n-2}$}

Recall from \S \ref{sec:the:connectivity:of:the:unpuncturing:map} that
\begin{equation*}
\hconn (f) \coloneqq \mathrm{max} \Bigl\lbrace * \Bigl\lvert \begin{smallmatrix} f \text{\itshape\ is surjective on homology up to degree } * \\ f \text{\itshape\ is injective on homology up to degree } *-1 \end{smallmatrix} \Bigr. \Bigr\rbrace .
\end{equation*}
In this notation the inductive hypothesis is
\begin{equation*}
\hconn (s_{n^\prime}) \geq \tfrac{n^\prime -2}{3}, \; \forall n^\prime <n.
\end{equation*}
As noted in Remark \ref{rmk:up:homotopic:to:id:2}, $\widetilde{u}_r \circ \widetilde{p}_r$ is homotopic to the identity, so $(\widetilde{u}_r)_* \circ (\widetilde{p}_r)_* = \id$. Hence $(\widetilde{u}_r)_*$ is injective up to the same degree which $(\widetilde{p}_r)_*$ is surjective up to, so $\hconn (\widetilde{p}_r) = \hconn (\widetilde{u}_r) -1$. Combining this with Proposition \ref{prop:excision} we have
\begin{equation*}
\hconn (\widetilde{p}_r) \geq \hconn (s_{r-1}) + \mathrm{dim} (M) -1,
\end{equation*}
for $r\geq 3$. Using the inductive hypothesis and the fact that $\mathrm{dim} (M)\geq 2$ we get:
\begin{align*}
\hconn (\widetilde{p}_{n-1}) &\geq \tfrac{n-1}{3} &\text{and}&& \hconn (\widetilde{p}_{n-2}) &\geq \tfrac{n-2}{3}.
\end{align*}

\subsubsection*{The relative inclusion-of-the-fibre maps $\widetilde{\jmath}_{n,0}$ and $\widetilde{\jmath}_{n-1,0}$.}

Recall the spectral sequence
\begin{equation}
E_{s,t}^2 \; \cong \; H_s \bigl( \widetilde{C}_{i+1} (M); \widetilde{H}_t (R_{n-i-1}^+ (M_{i+1})) \bigr) \quad \Rightarrow \quad \widetilde{H}_* (R_n^+ (M)^i) \tag{\ref{eqn:RSSS}}
\end{equation}
from Proposition \ref{prop:the:two:spectral:sequences}. The edge homomorphism
\begin{equation*}
\widetilde{H}_t (R_{n-i-1}^+ (M_{i+1})) \cong E^2_{0,t} \twoheadrightarrow E^{\infty}_{0,t} \hookrightarrow \widetilde{H}_t (R_n^+ (M)^i)
\end{equation*}
is the map on $\widetilde{H}_t$ induced by $\widetilde{\jmath}_{n,i}$.

Now, the inductive hypothesis implies that $E^2_{s,t} =0$ for $t\leq \frac{n-i-3}{3}$, so the $E^2$ page is as illustrated in Fig.\ (\ref{fig:3:ss:diagrams}a). Hence in degrees $t\leq\frac{n-i-3}{3}$ the map $\widetilde{\jmath}_{n,i}$ induces $0\to 0$ on $\widetilde{H}_t$, which is trivially surjective. Moreover in the larger range $t\leq\frac{n-i}{3}$ we can see from Fig.\ (\ref{fig:3:ss:diagrams}a) that the inclusion $E^{\infty}_{0,t} \hookrightarrow \widetilde{H}_t (R_n^+ (M)^i)$ is an isomorphism, so $\widetilde{\jmath}_{n,i}$ still induces a surjection on $\widetilde{H}_t$. Setting $i=0$, this proves that $\widetilde{\jmath}_{n,0}$ is surjective on homology up to degree $\frac{n}{3}$. The argument goes through identically when $n$ is replaced by $n-1$, and proves that $\widetilde{\jmath}_{n-1,0}$ is surjective on homology up to degree $\frac{n-1}{3}$.

\subsubsection*{The relative augmentation maps $\widetilde{a}_n$ and $\widetilde{a}_{n-1}$.}

Recall the spectral sequence
\begin{equation}
E_{s,t}^1 \; \cong \; \widetilde{H}_t (R_n^+ (M)^s) \quad \Rightarrow \quad \widetilde{H}_{*+1}C\widetilde{\varepsilon}_n \tag{\ref{eqn:DSS}}
\end{equation}
from Proposition \ref{prop:the:two:spectral:sequences}. The differential $E^1_{-1,t} \longleftarrow E^1_{0,t}$ is the map on $\widetilde{H}_t$ induced by $\widetilde{a}_n$.

Now, as noted above, the spectral sequence \eqref{eqn:RSSS} has $E^2$ page as illustrated in Fig.\ (\ref{fig:3:ss:diagrams}a) --- hence it converges to zero in total degree up to $\frac{n-i-3}{3}$. The limit of \eqref{eqn:RSSS} is the $i$th column of the $E^1$ page of \eqref{eqn:DSS}, so we have a column of zeros on the $E^1$ page of \eqref{eqn:DSS} as shown in Fig.\ (\ref{fig:3:ss:diagrams}b). There is a spectral sequence \eqref{eqn:RSSS} for each $0\leq i\leq n-3$, so there is a triangle of zeros on the $E^1$ page of \eqref{eqn:DSS} as shown in Fig.\ (\ref{fig:3:ss:diagrams}c).

Now assume that $t-1 \leq \frac{n-4}{3}$. Looking at Fig.\ (\ref{fig:3:ss:diagrams}c) we see that the \emph{first} differential is the only possible nontrivial differential hitting $E^1_{-1,t}$. Also, by Corollary \ref{coro:converging:to:zero}, the spectral sequence \eqref{eqn:DSS} converges to zero in total degree $\leq \frac{n-4}{3} \leq n-1$, so we have $E^{\infty}_{-1,t} =0$. Hence the first differential $E^1_{-1,t} \longleftarrow E^1_{0,t}$ must be surjective.

So $\widetilde{a}_n$ induces surjections on $\widetilde{H}_t$ for $t-1\leq\frac{n-4}{3}$, i.e.\ for $t\leq\frac{n-1}{3}$. The argument goes through identically when $n$ is replaced by $n-1$, and proves that $\widetilde{a}_{n-1}$ induces surjections on $\widetilde{H}_t$ for $t\leq\frac{n-2}{3}$.
\end{proof}
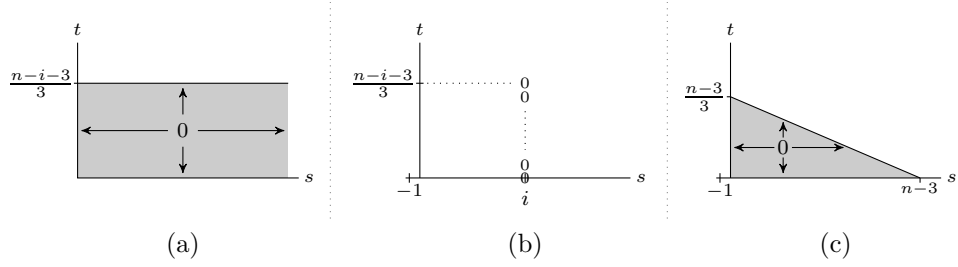
\begin{figure}[ht]
\centering
\begin{tikzpicture}
[x=1mm,y=1mm,yscale=1.8,xscale=1.4,font=\footnotesize,>=stealth']
\newcommand{\xoffseta}{32.5mm}
\newcommand{\xoffsetb}{62mm}
\node at (10,-5) [font=\normalsize] {(a)};
\fill [black!20] (0,0) rectangle (20,7);
\draw (0,10)--(0,0)--(21,0);
\draw ($ (0,7) + (-1pt,0) $) -- (20,7);
\node at (0,11) [font=\scriptsize] {$t$};
\node at (22,0) [font=\scriptsize] {$s$};
\node at (-3.5,7) {$\frac{n-i-3}{3}$};
\node (zero) at (10,3.5) {$0$};
\draw [->] (zero) to ($ (10,7) + (0,-1pt) $);
\draw [->] (zero) to ($ (10,0) + (0,1pt) $);
\draw [->] (zero) to ($ (0,3.5) + (1pt,0) $);
\draw [->] (zero) to ($ (20,3.5) + (-1pt,0) $);
\draw [black!50,dotted] (24,-3)--(24,13);
\begin{scope}[xshift=\xoffseta]
\node at (10,-5) [font=\normalsize] {(b)};
\draw ($ (-1,0) + (-1pt,0) $) -- (20,0);
\draw (0,0)--(0,10);
\draw (-1mm,1pt)--(-1mm,-1pt);
\node at (-1,-1) [font=\scriptsize] {$-1$};
\node at (21,0) [font=\scriptsize] {$s$};
\node at (0,11) [font=\scriptsize] {$t$};
\draw (-1pt,7mm)--(1pt,7mm);
\node at (-3.5,7) {$\frac{n-i-3}{3}$};
\draw (10mm,-1pt)--(10mm,1pt);
\node at (10,-1.5) {$i$};
\node (topzero) at (10,7) [font=\tiny] {$0$};
\node (2ndzero) at (10,6) [font=\tiny] {$0$};
\node (3rdzero) at (10,1) [font=\tiny] {$0$};
\node at (10,0) [font=\tiny] {$0$};
\draw [dotted] (2pt,7mm) to (topzero);
\draw [dotted] (2ndzero) to (3rdzero);
\end{scope}
\draw [black!50,dotted] (56,-3)--(56,13);
\begin{scope}[xshift=\xoffsetb]
\node at (10,-5) [font=\normalsize] {(c)};
\fill [black!20] (0,0) -- (0,6) -- (18,0) -- cycle;
\draw ($ (-1,0) + (-1pt,0) $) -- (20,0);
\draw (0,0)--(0,10);
\draw (-1mm,1pt)--(-1mm,-1pt);
\node at (-1,-1) [font=\scriptsize] {$-1$};
\node at (21,0) [font=\scriptsize] {$s$};
\node at (0,11) [font=\scriptsize] {$t$};
\draw (0,6) -- (18,0);
\draw (-1pt,6mm)--(0,6mm);
\node at (-2.5,6) {$\frac{n-3}{3}$};
\draw (18mm,0)--(18mm,-1pt);
\node at (18,-1) {$\phantom{ }^{n-3}$};
\node (zero) at (5,2.25) [inner sep=1pt] {$0$};
\draw [->] (zero) to (1pt,2.25mm);
\draw [->] (zero) to (5mm,1pt);
\draw [->,shorten >=2pt] (zero) to (intersection of 0,6--18,0 and 0,2.25--1,2.25);
\draw [->,shorten >=1pt] (zero) to (intersection of 0,6--18,0 and 5,0--5,1);
\end{scope}
\end{tikzpicture}
\caption{The two spectral sequences from the proof of surjectivity: (a) is the $E^2$ page of \eqref{eqn:RSSS}; (b) and (c) are the $E^1$ page of \eqref{eqn:DSS}.}\label{fig:3:ss:diagrams}
\end{figure}

\subsection{Zero on homology}

The factorisation of $\widetilde{s}^2_{(132)}$ (on homology) we will use for this part comes from a more general factorisation lemma, so we begin by stating this.

\subsubsection{A general factorisation lemma.}

As before, we let $Cf$ denote the mapping cone of a map $f$. Suppose we have a homotopy-commutative square of maps:
\begin{equation}\label{eqn:square:S}\tag{$\calS$}
\centering
\begin{split}
\begin{tikzpicture}
[x=1.2mm,y=1.2mm,>=stealth']
\node (tl) at (0,10) {$A$};
\node (tr) at (20,10) {$X$};
\node (bl) at (0,0) {$B$};
\node (br) at (20,0) {$Y$};
\draw [->] (tl) to node[above,font=\small]{$i$} (tr);
\draw [->] (bl) to node[below,font=\small]{$j$} (br);
\draw [->] (tl) to node[left,font=\small]{$f$} (bl);
\draw [->] (tr) to node[right,font=\small]{$g$} (br);
\end{tikzpicture}
\end{split}
\end{equation}
Choosing any particular homotopy $H\colon g\circ i \simeq j\circ f$ to fill this square induces a map $CH\colon Ci \to Cj$, and completes an exact `ladder' on homology:
\begin{equation}\label{eqn:exact:ladder}
\centering
\begin{split}
\begin{tikzpicture}
[x=1.2mm,y=1.2mm,>=stealth']
\node (t1) at (0,10) {$\cdots$};
\node (t2) at (20,10) {$\widetilde{H}_* X$};
\node (t3) at (40,10) {$\widetilde{H}_* Ci$};
\node (t4) at (60,10) {$\widetilde{H}_{*-1} A$};
\node (t5) at (80,10) {$\cdots$};
\node (b1) at (0,0) {$\cdots$};
\node (b2) at (20,0) {$\widetilde{H}_* Y$};
\node (b3) at (40,0) {$\widetilde{H}_* Cj$};
\node (b4) at (60,0) {$\widetilde{H}_{*-1} B$};
\node (b5) at (80,0) {$\cdots$};
\draw [->] (t1) to (t2);
\draw [->] (t2) to (t3);
\draw [->] (t3) to (t4);
\draw [->] (t4) to (t5);
\draw [->] (b1) to (b2);
\draw [->] (b2) to (b3);
\draw [->] (b3) to (b4);
\draw [->] (b4) to (b5);
\draw [->] (t2) to node[right,font=\small]{$g_*$} (b2);
\draw [->] (t3) to node[right,font=\small]{$CH_*$} (b3);
\draw [->] (t4) to node[right,font=\small]{$f_*$} (b4);
\end{tikzpicture}
\end{split}
\end{equation}
We say that \eqref{eqn:square:S} \emph{splits into triangles} if there exists a map $d\colon X \to B$, together with homotopies $F_1 \colon d\circ i \simeq f$, $F_2 \colon g \simeq j\circ d$. In other words the square can be filled in as:
\begin{equation}\label{eqn:splitting:into:triangles}
\centering
\begin{split}
\begin{tikzpicture}
[x=1.2mm,y=1.2mm,>=stealth']
\node (tl) at (0,10) {$A$};
\node (tr) at (20,10) {$X$};
\node (bl) at (0,0) {$B$};
\node (br) at (20,0) {$Y$};
\draw [->] (tl) to node[above,font=\small]{$i$} (tr);
\draw [->] (bl) to node[below,font=\small]{$j$} (br);
\draw [->] (tl) to node[left,font=\small]{$f$} (bl);
\draw [->] (tr) to node[right,font=\small]{$g$} (br);
\draw [->] (tr) to (bl);
\node at (5,7) {$\simeq$};
\node at (15,3) {$\simeq$};
\end{tikzpicture}
\end{split}
\end{equation}
\begin{lem}[``Factorisation lemma'']\label{lem:factorisation}
If the square \eqref{eqn:square:S} splits into triangles, and $H$ is any homotopy filling this square, then $CH_*$ factors through a map $z_H \colon \widetilde{H}_{*-1} A \to \widetilde{H}_* Y$ in diagram \eqref{eqn:exact:ladder}. Hence in particular the composition $\widetilde{H}_* X \to \widetilde{H}_* Cj$ in \eqref{eqn:exact:ladder} is zero.

Moreover, $z_H$ itself factorises as follows\textup{:}
\begin{equation*}
\widetilde{H}_{*-1} A \hookrightarrow \widetilde{H}_* (S^1 \times A) \xrightarrow{\gamma_*} \widetilde{H}_* Y,
\end{equation*}
where the first map is the inclusion of a direct summand in the K\"{u}nneth splitting $\widetilde{H}_* (S^1 \times A) \cong \widetilde{H}_{*-1} (A) \oplus H_{*-1} (pt) \oplus \widetilde{H}_* (A)$, and the second map is induced by the self-homotopy $\gamma\colon S^1 \times A \to Y$ built out of $H$ and the two homotopies $F_1$ and $F_2$ occurring in \eqref{eqn:splitting:into:triangles}.
\end{lem}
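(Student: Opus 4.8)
The plan is to make the induced map $CH\colon Ci\to Cj$ completely explicit and then to dismantle it in two stages: first showing that the ``triangle part'' of $H$ contributes nothing on homology, and then identifying the remaining contribution with the claimed composite through $z_H$. Recall that $Ci=X\cup_i\mathrm{Cone}(A)$ and $Cj=Y\cup_j\mathrm{Cone}(B)$, and that in these terms $CH$ is $g$ on $X$, is the homotopy $H$ on the first half of the cone coordinate, and is $\mathrm{Cone}(f)$ on the second half. \emph{Step 1.} The restriction of $CH$ to $X\hookrightarrow Ci$ is $g$ followed by $Y\hookrightarrow Cj$; since $F_2\colon g\simeq j\circ d$ and the composite $B\xrightarrow{j}Y\hookrightarrow Cj$ is null-homotopic (it slides up $\mathrm{Cone}(B)\subseteq Cj$), this restriction is null-homotopic. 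As $X\hookrightarrow Ci$ is a cofibration, $CH$ is then homotopic to a map factoring through the Puppe collapse $Ci\to Ci/X=\Sigma A$, so on homology $CH_*$ factors through $\partial\colon\widetilde H_*Ci\to\widetilde H_*\Sigma A\cong\widetilde H_{*-1}A$; since $\mathrm{im}(\widetilde H_*X\to\widetilde H_*Ci)=\ker\partial$, this already yields the ``in particular'' clause (the composite $\widetilde H_*X\to\widetilde H_*Cj$ in \eqref{eqn:exact:ladder} vanishes).

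\emph{Step 2.} Next I would show that the homotopy $K\coloneqq (jF_1)\ast(F_2 i)\colon gi\simeq jf$ assembled from the splitting data of \eqref{eqn:splitting:into:triangles} induces a null-homotopic map $CK\colon Ci\to Cj$. The point is that $d$ and $F_1$ yield a map of horizontal cofibre sequences over $\mathrm{id}_A$, hence a map $\widehat d\colon Ci\to Cf$, while for the composable pair $A\xrightarrow{f}B\xrightarrow{j}Y$ the standard (octahedral) cofibre sequence $Cf\xrightarrow{\phi}C(jf)\xrightarrow{\psi}Cj$ gives $\psi\circ\phi\simeq\mathrm{const}$. A routine comparison of the defining formulas --- the only discrepancy being that $CK$ absorbs $F_2$ into a collar of $X$ rather than applying $jd$ directly --- identifies $CK$ with $\psi\circ\phi\circ\widehat d$ up to homotopy, whence $CK\simeq\mathrm{const}$ and $CK_*=0$.

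\emph{Step 3.} Since any two fillings of \eqref{eqn:square:S} differ by a self-homotopy of $gi=jf$, I would write $H$ as a concatenation $\gamma\ast K$, where $\gamma\colon S^1\times A\to Y$ (sending $\{*\}\times A$ to $gi$) is exactly the self-homotopy built from $H$, $F_1$ and $F_2$ in the statement. Because $CH$ and $CK$ agree on $X$ and differ only by the insertion of the loop $\gamma$ at the base of the cone coordinate, the coaction $Ci\to Ci\vee\Sigma A$ exhibits $CH$, up to homotopy, as $(CK\vee\widehat\gamma)\circ(\text{coaction})$ for a map $\widehat\gamma\colon\Sigma A\to Cj$; as $\gamma$ takes values in $Y$, $\widehat\gamma$ factors through $Y\hookrightarrow Cj$. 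Using Step 2, $CH_*=\widehat\gamma_*\circ\partial$, and setting $z_H\coloneqq\widehat\gamma_*\colon\widetilde H_{*-1}A\to\widetilde H_*Y$ establishes the first assertion. Finally, unwinding the passage from $\gamma$ to $\widehat\gamma$ through the quotient $S^1\times A\to(S^1\times A)/(\{*\}\times A)$ --- whose effect on homology is precisely to project onto the summands of the Künneth decomposition corresponding to $\widetilde H_*(S^1)$ and to $\widetilde H_*(\Sigma A)\cong\widetilde H_{*-1}(A)$ --- identifies $z_H$ with the composite $\widetilde H_{*-1}A\hookrightarrow\widetilde H_*(S^1\times A)\xrightarrow{\gamma_*}\widetilde H_*Y$ of the statement.

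I expect the main obstacle to be the homotopy-theoretic bookkeeping in Step 3: pinning down the coaction precisely, verifying that the difference map between the two fillings really is $\widehat\gamma$, and tracking basepoints and the unreduced-versus-reduced suspension when matching $\widehat\gamma_*$ to the Künneth-summand description of $z_H$. None of this is conceptually deep, but it is exactly the kind of delicate verification that is best carried out in detail in Appendix~\ref{appendix:proof:of:the:factorisation:lemma}.
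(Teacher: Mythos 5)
Your overall strategy is the same as the paper's: kill the contribution of $X$ using $F_2$ so that $CH_*$ factors through the Puppe boundary $\partial$, observe that the split homotopy induces a null-homotopic map of cones, and identify the residual map $\Sigma A \to Cj$ with the loop $\gamma = H \ast \overline{K}$ via a K\"{u}nneth argument. Steps 1 and 2 are sound (Step 2's octahedron argument is a clean packaging of the explicit null-homotopy that slides everything up the cone $CB$).

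The genuine gap is in Step 3, and it sits exactly where the lemma's content lives. The difference element $\widehat{\gamma}\colon \Sigma A \to Cj$ does \emph{not} factor through $Y \hookrightarrow Cj$ at the space level: the discrepancy between $CH$ and $CK$ is concentrated at the \emph{base} of the cone coordinate, whereas the coaction splits off $\Sigma A$ at the cone point, so identifying the difference element forces $\widehat{\gamma}$ to run up and down the cone $CB$ near the two suspension points (its value at each suspension point is the tip of $CB$, not a point of $Y$). Relatedly, your ``unwinding'' has the arrow pointing the wrong way: $\gamma$ does not descend along the quotient $S^1 \times A \to (S^1 \times A)/(\lbrace \ast \rbrace \times A)$, because $\gamma|_{\lbrace \ast \rbrace \times A} = g\circ i$ is not constant. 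The repair is to replace the quotient by the mapping cone $(S^1 \times A)\cup CA$ (homotopy equivalent to $\Sigma A \vee S^1$), through which $\widehat{\gamma}$ \emph{does} factor with restriction $\mathrm{inc}\circ\gamma$ on the subspace $S^1 \times A$, and then to invoke a homology-level section $\delta$ of the inclusion $\varepsilon\colon S^1 \times A \hookrightarrow (S^1 \times A)\cup CA$ --- which exists because $\lbrace \ast \rbrace \times A \hookrightarrow S^1 \times A$ is split --- so that $\widehat{\gamma}_* = \widehat{\gamma}_*\varepsilon_*\delta = \mathrm{inc}_*\gamma_*\delta$. Only at this point does the factorisation through $\widetilde{H}_* Y$ appear, and checking that $\delta$ composed with $\widetilde{H}_*\Sigma A \to \widetilde{H}_*((S^1\times A)\cup CA)$ is the K\"{u}nneth inclusion of $\widetilde{H}_{*-1}A$ is the remaining (genuinely routine) computation.
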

\begin{proof}\renewcommand{\qedsymbol}{}
See \ref{appendix:proof:of:the:factorisation:lemma}.
\end{proof}

\subsubsection{Applying the factorisation lemma.}

In particular we may take \eqref{eqn:square:S} to be the square
\begin{center}
\begin{tikzpicture}
[x=1.5mm,y=1.2mm,>=stealth']
\node (tl) at (0,10) {$C_{n-2}^+ (M)$};
\node (tr) at (20,10) {$C_{n-1}^+ (M)$};
\node (bl) at (0,0) {$C_n^+ (M)$};
\node (br) at (20,0) {$C_{n+1}^+ (M)$};
\draw [->] (tl) to node[above,font=\small]{$s$} (tr);
\draw [->] (bl) to node[below,font=\small]{$s$} (br);
\draw [->] (tl) to node[left,font=\small]{$-s^2$} (bl);
\draw [->] (tr) to node[right,font=\small]{$-s^2$} (br);
\end{tikzpicture}
\end{center}
(for $n\geq 3$). This is the right-hand square from \eqref{eqn:squares:of:stabilisation:maps}. It splits into triangles, since we may for example take the diagonal map to be $-s\colon C_{n-1}^+ (M) \to C_n^+ (M)$, and the two homotopies to be constant. (See also Remark \ref{rmk:factorisation:of:relative:sn}.) Taking $H$ to be the homotopy $(132)$, as defined in \S \ref{subsubsec:relative:stabilisation:maps}, Lemma \ref{lem:factorisation} implies the following factorisation of $(\widetilde{s}^2_{(132)})_*$:
\begin{coro}\label{coro:factorisation}
The map $(\widetilde{s}^2_{(132)})_* \colon \widetilde{H}_* R_{n-2}^+ (M) \to \widetilde{H}_* R_n^+ (M)$ factorises as follows\textup{:}
\begin{equation*}
\widetilde{H}_* R_{n-2}^+ (M) \to \widetilde{H}_{*-1} C_{n-2}^+ (M) \hookrightarrow \widetilde{H}_* (S^1 \!\times\! C_{n-2}^+ (M)) \xrightarrow{\gamma_*} \widetilde{H}_* C_{n+1}^+ (M) \to \widetilde{H}_* R_n^+ (M).
\end{equation*}
The first and last maps come from the long exact sequences for $C_{n-2}^+ (M) \to C_{n-1}^+ (M)$ and $C_n^+ (M) \to C_{n+1}^+ (M)$ respectively, the second map comes from the K\"{u}nneth splitting of $\widetilde{H}_* \bigl( S^1 \!\times\! C_{n-2}^+ (M) \bigr)$, and
\begin{equation*}
\gamma\colon S^1 \times C_{n-2}^+ (M) \to C_{n+1}^+ (M)
\end{equation*}
is the self-homotopy $(132)$.
\end{coro}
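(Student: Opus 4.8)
The plan is to read this off directly from the Factorisation Lemma (Lemma \ref{lem:factorisation}), since the hypotheses have already been arranged above: the displayed square is the right-hand square of \eqref{eqn:squares:of:stabilisation:maps}, it splits into triangles via the diagonal $-s\colon C_{n-1}^+(M)\to C_n^+(M)$ with both filling homotopies constant, and $H$ is taken to be $(132)$. First I would record the translation of the abstract data of Lemma \ref{lem:factorisation} into the present setting. With $A=C_{n-2}^+(M)$, $X=C_{n-1}^+(M)$, $B=C_n^+(M)$, $Y=C_{n+1}^+(M)$ and $i=j=s$, the mapping cones $Ci$ and $Cj$ are by definition the relative configuration spaces $R_{n-2}^+(M)$ and $R_n^+(M)$; and, by the construction of the relative double stabilisation maps in \S\ref{subsubsec:relative:stabilisation:maps}, the map $CH\colon Ci\to Cj$ induced by $H=(132)$ is exactly $\widetilde{s}^2_{(132)}$. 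Under these identifications the exact ladder \eqref{eqn:exact:ladder} is the map between the long exact sequences of the cofibre sequences $C_{n-2}^+(M)\to C_{n-1}^+(M)\to R_{n-2}^+(M)$ and $C_n^+(M)\to C_{n+1}^+(M)\to R_n^+(M)$, so that the two maps $\widetilde{H}_*Ci\to\widetilde{H}_{*-1}A$ and $\widetilde{H}_*Y\to\widetilde{H}_*Cj$ occurring in the lemma are precisely the first and last maps named in the statement.

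Then I would invoke Lemma \ref{lem:factorisation}. It asserts that $CH_*=(\widetilde{s}^2_{(132)})_*$ factors through the map $z_H\colon\widetilde{H}_{*-1}A\to\widetilde{H}_*Y$ of \eqref{eqn:exact:ladder}, and that $z_H$ is itself the composite $\widetilde{H}_{*-1}A\hookrightarrow\widetilde{H}_*(S^1\times A)\xrightarrow{\gamma_*}\widetilde{H}_*Y$, where the first map is the K\"unneth inclusion and $\gamma\colon S^1\times A\to Y$ is the self-homotopy assembled from $H$ together with the two triangle-filling homotopies $F_1,F_2$. Because $F_1$ and $F_2$ were chosen constant, this assembled self-homotopy is, up to reparametrising the $S^1$-coordinate, just $H=(132)$ itself; hence $\gamma$ is the self-homotopy $(132)$, viewed as a map $S^1\times C_{n-2}^+(M)\to C_{n+1}^+(M)$. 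Splicing the five terms together yields exactly the factorisation claimed.

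I do not expect a genuine obstacle here --- the whole argument is a bookkeeping exercise once Lemma \ref{lem:factorisation} is in hand. The one point deserving a moment's care is the claim that the square splits into triangles with \emph{constant} homotopies: this needs $(-s)\circ s=-s^2$ (immediate from the definitions $-s\coloneqq\nu\circ s$ and $-s^2\coloneqq\nu\circ s\circ s$) and $s\circ(-s)=-s^2$, and the latter uses that $s$ commutes with $\nu$, i.e.\ that adding a new point in the last slot and then transposing the last two slots gives the same oriented configuration as transposing first and then adding --- the two orderings of the points differ by a $3$-cycle, hence by an even permutation. Beyond this the proof is purely formal.
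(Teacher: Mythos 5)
Your proposal is correct and follows exactly the paper's route: the corollary is read off from Lemma \ref{lem:factorisation} applied to the right-hand square of \eqref{eqn:squares:of:stabilisation:maps}, which splits into triangles via the diagonal $-s$ with constant homotopies, so that $\gamma$ collapses to $H=(132)$. Your extra verification that $s\circ(-s)=-s^2=(-s)\circ s$ (the two orderings differing by a $3$-cycle) is a correct check of a point the paper merely asserts.
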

\begin{proof}
This is immediate from Lemma \ref{lem:factorisation}, once we note that in this case we can take the split homotopy \eqref{eqn:splitting:into:triangles} to be the \emph{constant} homotopy, so that $\gamma$ is just $H=(132)$.
\end{proof}
\noindent Rephrasing the definition of the homotopy $(132)$ in \S \ref{subsubsec:relative:stabilisation:maps}, we may describe $\gamma$, as just a \emph{map} $S^1 \!\times\! C_{n-2}^+ (M) \to C_{n+1}^+ (M)$, concretely as follows:
\begin{center}
\begin{tikzpicture}
[x=1mm,y=1mm]
\node at (-15,0) {$(t,c) \quad \mapsto$};
\fill[black!15] (10,4) arc (90:270:2 and 4) -- (0,-4) arc (270:90:2 and 4) -- cycle;
\draw[dashed] (0,4)--(5,4);
\draw (5,4)--(20,4);
\draw[dashed] (0,-4)--(5,-4);
\draw (5,-4)--(20,-4);
\draw (20,0) ellipse (2 and 4);
\draw (10,4) arc (90:270:2 and 4);
\node at (23.5,-3) [font=\footnotesize] {$B_0$};
\node at (3,0) {$c$};
\draw (13,0) circle (2);
\node[circle,fill,inner sep=1pt] at ($ (13,0) + (0:2) $) {};
\node[circle,fill,inner sep=1pt] at ($ (13,0) + (120:2) $) {};
\node[circle,fill,inner sep=1pt] at ($ (13,0) + (240:2) $) {};
\end{tikzpicture}
\end{center}
The configuration $c$ is pushed away from the chosen boundary-component $B_0$, and three new points are added on a small embedded circle near $B_0$, at the positions $\lbrace t^{\nicefrac13}, \omega t^{\nicefrac13}, \omega^2 t^{\nicefrac13} \rbrace$ where $\omega = \mathrm{exp}(\frac23 \pi i)$. Fix an orientation of the circle: this gives the three new points a cyclic ordering $[p_1,p_2,p_3]$, and we use the orientation convention $[c,p_1,p_2,p_3]$.

We can use this description to check that $\gamma$ is natural w.r.t.\ stabilisation maps:
\begin{lem}\label{lem:naturality:of:gamma}
The following square is commutative up to homotopy\textup{:}
\begin{center}
\begin{tikzpicture}
[x=2mm,y=1.5mm,>=stealth']
\node (tl) at (0,10) {$S^1 \times C_{n-2}^+ (M)$};
\node (tr) at (20,10) {$C_{n+1}^+ (M)$};
\node (bl) at (0,0) {$S^1 \times C_{n-3}^+ (M)$};
\node (br) at (20,0) {$C_n^+ (M)$};
\draw [->] (tl) to node[above,font=\small]{$\gamma$} (tr);
\draw [->] (bl) to node[below,font=\small]{$\gamma$} (br);
\draw [->] (bl) to node[left,font=\small]{$1 \times (-s)$} (tl);
\draw [->] (br) to node[right,font=\small]{$s$} (tr);
\end{tikzpicture}
\end{center}
\end{lem}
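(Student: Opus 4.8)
The plan is to write down both composites explicitly as maps $S^1\times C_{n-3}^+(M)\to C_{n+1}^+(M)$, using the concrete description of $\gamma$ together with the definitions of $s$ and $-s=\nu\circ s$, and then to exhibit a homotopy between them. Write $c'=\left[\pair{p_1}{x_1}\cdots\pair{p_{n-3}}{x_{n-3}}\right]$ for a typical point of $C_{n-3}^+(M)$. For the composite $s\circ\gamma$ (bottom then right): first $\gamma(t,c')$ pushes $c'$ away from $B_0$ and appends the three ``cube-root-of-$t$'' points $p_1,p_2,p_3$ on a fixed small circle near $B_0$, giving a configuration ordered $[\,\bar{c}'_1,\dots,\bar{c}'_{n-3},p_1,p_2,p_3\,]$; then $s$ pushes everything once more and appends a single new point $r$ at $B_0$, yielding $[\,\bar{\bar{c}}'_1,\dots,\bar{\bar{c}}'_{n-3},\bar{p}_1,\bar{p}_2,\bar{p}_3,r\,]$. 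For the composite $\gamma\circ(1\times(-s))$ (left then top): first $-s(c')$ pushes $c'$ away and inserts a single new point $q$ at $B_0$ into the second-to-last slot, giving $[\,\bar{c}'_1,\dots,\bar{c}'_{n-4},q,\bar{c}'_{n-3}\,]$; then $\gamma(t,-)$ pushes everything and appends $p_1,p_2,p_3$, yielding $[\,\bar{\bar{c}}'_1,\dots,\bar{\bar{c}}'_{n-4},\bar{q},\bar{\bar{c}}'_{n-3},p_1,p_2,p_3\,]$.

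Next I would homotope each composite to a common ``standard form''. In both cases the $n-3$ old points can be slid back to fixed standard positions $a_1,\dots,a_{n-3}$, depending continuously and naturally on $c'$, since all the pushing homeomorphisms are isotopic to the identity through isotopies supported near $B_0$. Likewise the four points clustered near $B_0$ — one ``single'' point ($r$, resp.\ $\bar{q}$) and the three ``circle'' points ($\bar{p}_i$, resp.\ $p_i$) sitting at the cube roots of $t$ — can be slid without collisions, using $\dim(M)\geq 2$, to standard positions $w_1$ and $w_2(t),w_3(t),w_4(t)$, the latter three at the cube roots of $t$ on a fixed standard circle. Matching the single point of one composite with the single point of the other, and the circle of one with the circle of the other, and keeping the chosen tuple-representatives continuous throughout (so that the orientation is never twisted by these motions), $s\circ\gamma$ becomes $(t,c')\mapsto[\,a_1,\dots,a_{n-3},w_2(t),w_3(t),w_4(t),w_1\,]$ while $\gamma\circ(1\times(-s))$ becomes $(t,c')\mapsto[\,a_1,\dots,a_{n-4},w_1,a_{n-3},w_2(t),w_3(t),w_4(t)\,]$. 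Note that in both cases the $S^1$-monodromy is carried entirely by the three circle points, which rotate with $t$ in exactly the same way, so this matching is consistent over all of $S^1$ and the comparison reduces to a fixed-$t$ statement.

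It then remains to compare the two resulting orderings of the same underlying configuration. They differ precisely by the permutation that cyclically shifts the last five slots, sending $(w_1,a_{n-3},w_2(t),w_3(t),w_4(t))$ to $(a_{n-3},w_2(t),w_3(t),w_4(t),w_1)$; this is a $5$-cycle, hence an \emph{even} permutation. Therefore the two standard-form maps are literally equal as maps into $C_{n+1}^+(M)$, and the chain of homotopies above gives $s\circ\gamma\simeq\gamma\circ(1\times(-s))$, naturally and continuously in $(t,c')$.

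The one genuinely delicate point is this orientation bookkeeping. One must track carefully that $-s$ contributes the ``opposite'' insertion slot — this is exactly what turns the relating permutation from a $4$-cycle (odd) into a $5$-cycle (even), and is precisely why the left-hand map of the square must be $1\times(-s)$ rather than $1\times s$ — and one must check that the identifications used to bring the four points near $B_0$ into standard position do not themselves introduce a twist, which holds because they are realised by motions along which a continuous ordered representative can be chosen. Everything else — contracting the pushing homeomorphisms, the collision-free rearrangement near $B_0$, and the continuity of the homotopy in the $S^1$-parameter — is routine.
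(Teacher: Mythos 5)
Your proposal is correct and follows essentially the same route as the paper: both arguments reduce the comparison to the four points inserted near $B_0$ (the rotating triple plus one single point), slide the single point and the triple past each other in a coordinate neighbourhood, and check that the resulting reordering is even --- your $5$-cycle computation is the same sign bookkeeping as the paper's ``$4$-cycle plus the minus sign carried by $-s$''. The paper merely packages this by factoring both composites through $C_4^+(\mathbb{R}^d)\times C_{n-3}^+(M)\to C_{n+1}^+(M)$ and exhibiting an explicit homotopy between two loops of $4$-point configurations, which avoids carrying the $n-3$ old points (your positions $a_1,\dots,a_{n-3}$, which of course vary with $c'$ rather than being literally fixed) through the argument.
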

\begin{proof}
The two ways around this square are both of the form
\begin{equation*}
S^1 \times C_{n-3}^+ (M) \xrightarrow{\text{?} \times 1} C_4^+ (\bbR^d) \times C_{n-3}^+ (M) \longrightarrow C_{n+1}^+ (M),
\end{equation*}
where the second map is
\begin{center}
\begin{tikzpicture}
[x=1mm,y=1mm]
\node at (-15,0) {$(c_0,c) \quad \mapsto$};
\fill[black!15] (10,4) arc (90:270:2 and 4) -- (0,-4) arc (270:90:2 and 4) -- cycle;
\draw[dashed] (0,4)--(5,4);
\draw (5,4)--(20,4);
\draw[dashed] (0,-4)--(5,-4);
\draw (5,-4)--(20,-4);
\draw (20,0) ellipse (2 and 4);
\draw (10,4) arc (90:270:2 and 4);
\node at (23.5,-3) [font=\footnotesize] {$B_0$};
\node at (3,0) {$c$};
\draw[densely dotted] (10,-2) rectangle (16,2);
\node at (13,0) [font=\small] {$c_0$};
\end{tikzpicture}
\end{center}
Here, the configuration $c$ is pushed away from $B_0$, and the configuration $c_0$ is inserted into a coordinate neighbourhood near $B_0$ (and we use the orientation convention $[c,c_0]$).

The map $\text{`?'}\colon S^1 \longrightarrow C_4^+ (\bbR^d)$ is either
\begin{center}
\begin{tikzpicture}
[x=1mm,y=1mm,scale=1.5]
\draw (0,-5) rectangle (20,5);
\draw (5,0) circle (2);
\node[circle,fill,inner sep=1pt] at ($ (5,0) + (0:2) $) {};
\node[circle,fill,inner sep=1pt] at ($ (5,0) + (120:2) $) {};
\node[circle,fill,inner sep=1pt] at ($ (5,0) + (240:2) $) {};
\node[circle,fill,inner sep=1pt] at (15,0) {};
\node at ($ (5,0) + (0:3.5) $) [font=\footnotesize] {1};
\node at ($ (5,0) + (120:3.5) $) [font=\footnotesize] {2};
\node at ($ (5,0) + (240:3.5) $) [font=\footnotesize] {3};
\node at (16.5,0) [font=\footnotesize] {4};
\begin{scope}[xshift=40mm]
\draw (0,-5) rectangle (20,5);
\draw (13,0) circle (2);
\node[circle,fill,inner sep=1pt] at ($ (13,0) + (0:2) $) {};
\node[circle,fill,inner sep=1pt] at ($ (13,0) + (120:2) $) {};
\node[circle,fill,inner sep=1pt] at ($ (13,0) + (240:2) $) {};
\node[circle,fill,inner sep=1pt] at (3,0) {};
\node at ($ (13,0) + (0:3.5) $) [font=\footnotesize] {2};
\node at ($ (13,0) + (120:3.5) $) [font=\footnotesize] {3};
\node at ($ (13,0) + (240:3.5) $) [font=\footnotesize] {4};
\node at (4.5,0) [font=\footnotesize] {1};
\end{scope}
\node at (38,0) {\textbf{--}};
\node at (30,0) {or};
\end{tikzpicture}
\end{center}
(the numberings represent orientations of the configurations; the `\textbf{--}' on the right diagram indicates that the orientation should in fact be the \emph{opposite} of that illustrated). So it is enough to find a homotopy $h\colon S^1 \times I \longrightarrow C_4^+ (\bbR^d)$ connecting these two maps. Such a homotopy clearly does exist: for example define $h(t,u)$ to be
\begin{center}
\begin{tikzpicture}
[x=1mm,y=1mm,scale=2]

\draw (0,-5) rectangle (20,5);
\draw (5,0) circle (2);
\node[circle,fill,inner sep=1pt] at ($ (5,0) + (0:2) $) {};
\node[circle,fill,inner sep=1pt] at ($ (5,0) + (120:2) $) {};
\node[circle,fill,inner sep=1pt] at ($ (5,0) + (240:2) $) {};
\node[circle,fill,inner sep=1pt] at (15,0) {};
\node at ($ (5,0) + (0:3) $) [font=\footnotesize] {1};
\node at ($ (5,0) + (120:3) $) [font=\footnotesize] {2};
\node at ($ (5,0) + (240:3) $) [font=\footnotesize] {3};
\node at (16,0) [font=\footnotesize] {4};

\draw[densely dashed] (5,0) circle (4);
\draw[densely dashed] (15,0) circle (2);
\draw[->,>=stealth'] ($ (5,0) + (30:4.4) $) to [out=30,in=140] ($ (15,0) + (135:2.4) $);
\draw[->,>=stealth'] ($ (15,0) + (225:2.4) $) to [out=220,in=330] ($ (5,0) + (330:4.4) $);

\end{tikzpicture}
\end{center}
where $t\in S^1$ determines the positions of the $3$ points on the circle, and $u\in I$ determines how far along the arrows to move the dotted regions.
\end{proof}

\subsubsection{Zero on homology.}

Finally, we may apply our new factorisation of $(\widetilde{s}^2_{(132)})_*$ to deduce that it is zero in the required range:
\begin{lem}\label{lem:zero:on:homology}
Let $n\geq 4$, and assume as inductive hypothesis that \eqref{eqn:inductive:hypothesis} holds for smaller values of $n$. Then $\widetilde{s}^2_{(132)}$ is the zero map on (reduced) homology up to degree $\frac{n-2}{3}$.
\end{lem}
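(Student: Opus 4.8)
The plan is to feed the factorisation of $(\widetilde{s}^2_{(132)})_*$ supplied by Corollary \ref{coro:factorisation} into the naturality statement of Lemma \ref{lem:naturality:of:gamma}, using the inductive hypothesis to make the relevant stabilisation map ``surjective enough''. Concretely, Corollary \ref{coro:factorisation} writes $(\widetilde{s}^2_{(132)})_*$ on $\widetilde{H}_*$ as the composite $\widetilde{H}_* R_{n-2}^+(M) \to \widetilde{H}_{*-1} C_{n-2}^+(M) \xrightarrow{z_H} \widetilde{H}_* C_{n+1}^+(M) \to \widetilde{H}_* R_n^+(M)$, where $z_H$ is $\gamma_*$ precomposed with the Künneth inclusion into $\widetilde{H}_*(S^1 \times C_{n-2}^+(M))$ and the final arrow is induced by $C_{n+1}^+(M) \hookrightarrow R_n^+(M) = \hocofib(s_n)$. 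Since $C_n^+(M) \xrightarrow{s_n} C_{n+1}^+(M) \to R_n^+(M)$ is a cofibre sequence, this last arrow annihilates $\mathrm{im}\,(s_n)_*$; so it suffices to show that the image of $z_H \colon \widetilde{H}_{*-1} C_{n-2}^+(M) \to \widetilde{H}_* C_{n+1}^+(M)$ lies in $\mathrm{im}\,(s_n)_*$ whenever $* \le \frac{n-2}{3}$.

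To prove that, I would first rephrase the inductive hypothesis for $n' = n-3$ as $\hconn(s_{n-3}) \ge \frac{n-5}{3}$, so that, by Remark \ref{rmk:s:and:minus:s:difference}, $(-s_{n-3})_*$ is surjective on $\widetilde{H}_k$ for all $k \le \frac{n-5}{3}$; taking $k = *-1$ this is exactly the range $* \le \frac{n-2}{3}$. Next, since the Künneth inclusion $\widetilde{H}_k(A) \hookrightarrow \widetilde{H}_{k+1}(S^1 \times A)$ is natural in $A$, for $* \le \frac{n-2}{3}$ every class of $\widetilde{H}_{*-1} C_{n-2}^+(M)$, pushed into $\widetilde{H}_*(S^1 \times C_{n-2}^+(M))$, is the image under $(1 \times (-s_{n-3}))_*$ of a class coming from $\widetilde{H}_{*-1} C_{n-3}^+(M) \hookrightarrow \widetilde{H}_*(S^1 \times C_{n-3}^+(M))$. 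Applying $\gamma_*$ and invoking the homotopy-commutative square of Lemma \ref{lem:naturality:of:gamma}, namely $\gamma \circ (1 \times (-s_{n-3})) \simeq s_n \circ \gamma$, one finds that $z_H$ of that class equals $(s_n)_*$ of the image of the corresponding class under the ``lower'' $\gamma$, hence lies in $\mathrm{im}\,(s_n)_*$. Combined with the previous paragraph this gives $(\widetilde{s}^2_{(132)})_* = 0$ on $\widetilde{H}_*$ for $* \le \frac{n-2}{3}$; for $n = 4$ the statement is just path-connectedness of $R_n^+(M)$.

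I expect the only delicate point to be the degree book-keeping: one must check that the range in which $(-s_{n-3})_*$ is surjective (degree $\le \frac{n-5}{3}$) lines up, after the shift by one coming from the $\widetilde{H}_{*-1}(A)$-summand of the Künneth decomposition, with the target range $* \le \frac{n-2}{3}$ — it does, and this matching is precisely what forces the slope $\frac13$ rather than anything larger. One should also take care to use only the $\widetilde{H}_{*-1}(A)$ summand of $\widetilde{H}_*(S^1 \times A)$ (the extra $H_{*-1}(pt)$ summand is concentrated in degree $* = 1$ and is harmless since all the spaces are path-connected), and to record explicitly that the Künneth inclusion appearing in Corollary \ref{coro:factorisation} is natural with respect to maps of the second factor, since that naturality is exactly what lets the square of Lemma \ref{lem:naturality:of:gamma} be applied.
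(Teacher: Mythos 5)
Your argument is correct and is essentially the paper's own proof: both combine the factorisation of Corollary \ref{coro:factorisation} with the naturality square of Lemma \ref{lem:naturality:of:gamma} and the Künneth inclusion, use the inductive hypothesis to make $(-s_{n-3})_*$ surjective in degrees $*-1\leq\frac{n-5}{3}$, and conclude that $(\widetilde{s}^2_{(132)})_*$ factors through $\mathrm{im}\,(s_n)_*$, which is killed by the cofibration sequence $C_n^+(M)\to C_{n+1}^+(M)\to R_n^+(M)$. The only difference is presentational (an element chase versus the paper's commutative-diagram formulation), and your degree bookkeeping matches the paper's exactly.
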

\begin{proof}
By Corollary \ref{coro:factorisation}, Lemma \ref{lem:naturality:of:gamma}, and the naturality of the K\"{u}nneth splitting we have a commutative diagram
\begin{center}
\begin{tikzpicture}
[x=1.5mm,y=1.2mm,>=stealth']
\node (t1) at (0,10) {$\widetilde{H}_* R_{n-2}^+ (M)$};
\node (t2) at (20,10) {$\widetilde{H}_{*-1} C_{n-2}^+ (M)$};
\node (t3) at (40,10) {$\widetilde{H}_* C_{n+1}^+ (M)$};
\node (t4) at (60,10) {$\widetilde{H}_* R_n^+ (M)$};
\node (b2) at (20,0) {$\widetilde{H}_{*-1} C_{n-3}^+ (M)$};
\node (b3) at (40,0) {$\widetilde{H}_* C_n^+ (M)$};
\draw [->] (t1) to (t2);
\draw [->] (t2) to (t3);
\draw [->] (t3) to (t4);
\draw [->] (b2) to (b3);
\draw [->] (b2) to node[left,font=\small]{$(-s)_*$} (t2);
\draw [->] (b3) to node[right,font=\small]{$s_*$} (t3);
\draw [->] (b3.east) to node[below right=-1pt and -1pt,font=\small]{0} (t4);
\end{tikzpicture}
\end{center}
where the composition along the top row is $(\widetilde{s}^2_{(132)})_*$. The composition on the right is zero since it is induced by a cofibration sequence. By definition, the maps $\pm s$ differ only by an automorphism of their common codomain, so (as noted in Remark \ref{rmk:s:and:minus:s:difference}) they have the same surjectivity-on-homology properties. Hence by the inductive hypothesis $(-s)_*$ is surjective for $*-1\leq \frac{n-5}{3}$, i.e.\ for $*\leq \frac{n-2}{3}$. So in this range $(\widetilde{s}^2_{(132)})_*$ factors through the zero map, and hence is itself zero.
\end{proof}

\section{Corollaries}\label{sec:corollaries}

\subsection{Stability for generalised homology theories}

First we will prove Corollary \ref{coro:corollary:B} (stated in \S \ref{subsec:corollaries}). This follows directly from the Main Theorem and the following lemma:
\begin{lem}\label{lem:conn:gen:hom:th}
If $h_*$ is a connective generalised homology theory with connectivity $c$ \textup{(}i.e.\ its associated spectrum has connectivity $c$\textup{)}, and if the map $f\colon X \to Y$ is an isomorphism on $H_* (-;\bbZ)$ up to degree $k-1$ and surjective up to degree $k$, then $f$ is an isomorphism on $h_*$ up to degree $k-1+c$ and surjective on $h_*$ up to degree $k+c$.
\end{lem}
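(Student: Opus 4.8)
The plan is to reduce the statement about a general connective homology theory $h_*$ to the ordinary homology hypothesis via the Atiyah--Hirzebruch spectral sequence, applied to the mapping cone $Cf$. First I would restate the hypothesis in terms of the mapping cone: saying that $f\colon X\to Y$ is an isomorphism on $H_*(-;\bbZ)$ up to degree $k-1$ and surjective up to degree $k$ is exactly saying that $\widetilde{H}_*(Cf;\bbZ)=0$ for $*\leq k$. So it suffices to show that, under this vanishing, the reduced $h$-homology $\widetilde{h}_*(Cf)$ vanishes for $*\leq k+c$, since then the long exact sequence of the pair $(Cf$ gives that $f_*\colon h_*(X)\to h_*(Y)$ is an isomorphism for $*\leq k-1+c$ and surjective for $*\leq k+c$.

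Next I would set up the (reduced) Atiyah--Hirzebruch spectral sequence for $Cf$, with
\begin{equation*}
E^2_{p,q} \;=\; \widetilde{H}_p\bigl(Cf;\, h_q(pt)\bigr) \;\Longrightarrow\; \widetilde{h}_{p+q}(Cf).
\end{equation*}
Here I would use the universal coefficient theorem to identify $\widetilde{H}_p(Cf; h_q(pt))$ in terms of $\widetilde{H}_*(Cf;\bbZ)$: since $\widetilde{H}_p(Cf;\bbZ)=0$ for $p\leq k$ and $\widetilde{H}_{k+1}(Cf;\bbZ)$ could be nonzero, the $\mathrm{Tor}$-term in the UCT only involves $\widetilde{H}_{p-1}(Cf;\bbZ)$, which vanishes for $p-1\leq k$, i.e.\ $p\leq k+1$. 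Hence $E^2_{p,q}=0$ whenever $p\leq k$ (both the free and torsion contributions vanish there). Connectivity $c$ of the spectrum means $h_q(pt)=0$ for $q<c$, so also $E^2_{p,q}=0$ whenever $q<c$. Therefore $E^2_{p,q}$ can only be nonzero when $p\geq k+1$ and $q\geq c$, so the first possibly-nonzero total degree is $p+q\geq k+1+c$. Since the spectral sequence converges and all differentials preserve or lower total degree considerations in the appropriate way, $\widetilde{h}_m(Cf)=0$ for $m\leq k+c$, which is what we wanted.

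Then I would translate back: from $\widetilde{h}_*(Cf)=0$ for $*\leq k+c$ and the long exact sequence $\cdots\to h_*(X)\xrightarrow{f_*} h_*(Y)\to \widetilde{h}_*(Cf)\to h_{*-1}(X)\to\cdots$, we read off that $f_*$ is surjective for $*\leq k+c$ (since the term to the right, $\widetilde{h}_*(Cf)$, vanishes there) and injective for $*\leq k-1+c$ (since the term $\widetilde{h}_{*+1}(Cf)$ vanishes for $*+1\leq k+c$), giving the claimed isomorphism range. One should note this requires $Cf$ to be a reasonable space (a CW complex, or at least of the homotopy type of one) so that the Atiyah--Hirzebruch spectral sequence applies and converges; in our application $Cf$ is a mapping cone of a map of configuration spaces, which is fine.

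The main obstacle, such as it is, is purely bookkeeping: making sure the indexing in the universal coefficient theorem is handled correctly so that the vanishing line for $E^2$ really is $\{p\le k\}\cup\{q<c\}$ and not off by one — in particular that the possible nonvanishing of $\widetilde H_{k+1}(Cf;\bbZ)$ does not leak a $\mathrm{Tor}$ term into column $p=k$. Once that is pinned down, convergence of the Atiyah--Hirzebruch spectral sequence does all the remaining work, and there is no real geometric or homotopy-theoretic content beyond it.
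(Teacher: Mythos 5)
Your proposal is correct and follows essentially the same route as the paper: reduce to the vanishing of $\widetilde{H}_*(Cf;\bbZ)$ for $*\leq k$, run the reduced Atiyah--Hirzebruch spectral sequence for $Cf$, and use the Universal Coefficient Theorem together with the connectivity of the spectrum to kill the $E^2$ page in total degree $\leq k+c$. The only difference is cosmetic (you carefully track the Tor term and the connectivity convention $h_q(pt)=0$ for $q<c$, which the paper leaves implicit), so there is nothing to add.
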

\begin{proof}
By the long exact sequence for cofibration sequences, this is equivalent to the claim that
\begin{equation*}
\widetilde{H}_* (Cf;\bbZ) = 0 \; \forall *\leq k \quad \Rightarrow \quad \widetilde{h}_* (Cf) = 0 \; \forall *\leq k+c.
\end{equation*}
If $E$ is the spectrum associated to $h_*$, then we have the Atiyah--Hirzebruch spectral sequence (see \cite[Theorem 11.6]{McCleary1985}):
\begin{equation*}
E_{s,t}^2 \cong H_s (Cf; \pi_t (E)) \quad \Rightarrow \quad h_* (Cf).
\end{equation*}
Removing an $H_s (pt; \pi_t (E))$ summand from the $E^2$ page, and correspondingly an $h_* (pt) = \pi_* (E)$ summand from the limit, gives the reduced version:
\begin{equation*}
E_{s,t}^2 \cong \widetilde{H}_s (Cf; \pi_t (E)) \quad \Rightarrow \quad \widetilde{h}_* (Cf).
\end{equation*}
By the Universal Coefficient Theorem, and since $E$ is $c$-connected, the $E^2$ page is zero for $s\leq k$ or $t\leq c$. Therefore the limit is zero for total degrees $*\leq k+c$.
\end{proof}
\begin{rmk}
Alternatively, one could consider the map of (non-reduced) Atiyah--Hirzebruch spectral sequences induced by $f$, and apply the Zeeman comparison theorem \cite{Zeeman1957}.
\end{rmk}
\noindent We now revert to talking only about ordinary homology again, but of course the corollaries for sequences of groups below also have similar generalised homology versions.

\subsection{Wreath products with alternating braid groups}

Let $S$ be the interior of a connected surface-with-boundary \sbar, and $G$ be any discrete group.
\begin{defn}
The \emph{braid group on $n$ strands on $S$} is $\beta_n^S \coloneqq \pi_1 C_n (S,pt)$. When $S=\bbR^2$ this recovers the definition of the Artin braid group $\beta_n$ (by \cite{Fox:Neuwirth1962}). A based loop in $C_n (S,pt)$ induces a permutation of the basepoint configuration, so there is a natural projection $\beta_n^S \twoheadrightarrow \Sigma_n$. The \emph{alternating braid group on $n$ strands on $S$}, $A\beta_n^S$, is defined to be the index-$2$ subgroup of braids whose induced permutation is even. A loop in $C_n (S,pt)$ induces an even permutation iff it lifts to a loop in $C_n^+ (S,pt)$, so this is equivalent to defining $A\beta_n^S \coloneqq \pi_1 C_n^+ (S,pt)$.

The wreath product $G\wr A\beta_n^S$ is defined to be the semi-direct product
\begin{equation}\label{eqn:wreath:product}
1\to G^n \hookrightarrow G^n \rtimes A\beta_n^S \twoheadrightarrow A\beta_n^S \to 1
\end{equation}
where $A\beta_n^S$ acts on $G^n$ by permuting the $n$ factors through its projection to $A_n \leq \Sigma_n$.
\end{defn}
\noindent The first half of Corollary \ref{coro:corollary:A} (see \S \ref{subsec:corollaries}) follows directly from the Main Theorem and the following lemma:
\begin{lem}\label{lem:classifying:space}
Pick a model for the classifying space $BG$. Then $C_n^+ (S,BG)$ is a model for the classifying space $B(G\wr A\beta_n^S)$.
\end{lem}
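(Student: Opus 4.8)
The plan is to show that $C_n^+ (S, BG)$ is aspherical with fundamental group $G \wr A\beta_n^S$; as it has the homotopy type of a CW complex (it is a fibre bundle over a manifold with CW fibre), this identifies it with a model for $B(G \wr A\beta_n^S)$. For the homological applications of Corollary \ref{coro:corollary:A} it would in any case suffice to produce a weak equivalence.

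First I would assemble the asphericity inputs. Since $\sbar$ is a connected surface-with-boundary, $S$ deformation retracts onto a $1$-dimensional complex, hence is aspherical, as is $S$ with any finite set of points removed. Using the Fadell--Neuwirth fibrations $\widetilde{C}_k (S) \to \widetilde{C}_{k-1} (S)$ of \cite{Fadell:Neuwirth1962}, whose fibre is such a punctured open surface, and inducting up the long exact sequence of homotopy groups, it follows that $\widetilde{C}_n (S) = \Emb ([n], S)$ is aspherical. Since $A_n$ acts freely on $\Emb ([n], S)$, the projection $\Emb ([n], S) \to C_n^+ (S, pt)$ is a regular covering, so $C_n^+ (S, pt)$ is aspherical too, with fundamental group $A\beta_n^S$ (by the definition of $A\beta_n^S$ given above).

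Next I would use the bundle of labels. Writing $C_n^+ (S, BG) = \Emb ([n], S) \times_{A_n} (BG)^n$ as the bundle associated to the principal $A_n$-bundle $\Emb ([n], S) \to C_n^+ (S, pt)$ and the coordinate-permutation action of $A_n$ on $(BG)^n$, the label-forgetting map is a fibre bundle, hence a fibration,
\[
(BG)^n \longrightarrow C_n^+ (S, BG) \longrightarrow C_n^+ (S, pt) ,
\]
with aspherical fibre $(BG)^n$ and aspherical base $C_n^+ (S, pt)$. The long exact sequence of homotopy groups then shows simultaneously that $C_n^+ (S, BG)$ is aspherical and that there is a short exact sequence
\[
1 \longrightarrow G^n \longrightarrow \pi_1 C_n^+ (S, BG) \longrightarrow A\beta_n^S \longrightarrow 1 .
\]
To match this with \eqref{eqn:wreath:product} I would check two points. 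The monodromy of the bundle on the fibre $(BG)^n$ is induced by the deck transformations, i.e.\ by $A_n$ permuting the $n$ coordinates; hence $A\beta_n^S$ acts on $\pi_1 (BG)^n = G^n$ by permuting the factors through the projection $A\beta_n^S \twoheadrightarrow A_n \leq \Sigma_n$, which is exactly the action in \eqref{eqn:wreath:product}. And $[p_1 \cdots p_n] \mapsto \bigl[ \pair{p_1}{x_0} \cdots \pair{p_n}{x_0} \bigr]$, with $x_0$ the basepoint of $BG$, is a continuous section of the bundle, so the sequence splits. Hence $\pi_1 C_n^+ (S, BG) \cong G^n \rtimes A\beta_n^S = G \wr A\beta_n^S$, which together with asphericity completes the proof.

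The main point requiring care is bookkeeping rather than topology: one must match the monodromy action and the chosen section precisely against the semidirect-product structure of \eqref{eqn:wreath:product}, in particular keeping track of the permutation action through $A\beta_n^S \to A_n$. The asphericity statements themselves are routine consequences of the Fadell--Neuwirth fibrations and covering-space theory.
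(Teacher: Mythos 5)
Your proposal is correct and follows essentially the same route as the paper: asphericity via the Fadell--Neuwirth fibrations and covering-space theory, followed by the label-forgetting fibration $(BG)^n \to C_n^+(S,BG) \to C_n^+(S,pt)$ with its evident section to produce the split extension and identify the permutation action. The only cosmetic difference is that the paper runs the induction on the labelled ordered configuration spaces directly, whereas you first treat the unlabelled case and then attach the labels as an associated bundle; both are fine.
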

\begin{proof}
First we show that $C_n^+ (S,BG)$ is aspherical. In the case where \sbar\ is compact, using the classification of compact connected surfaces-with-boundary we can draw an explicit deformation retraction from \sbar\ onto a wedge of circles, so it is aspherical. In general, any map of a sphere into \sbar\ will have image contained in a \emph{compact} connected subsurface-with-boundary, so \sbar\ is also aspherical without the compactness assumption. Hence $S$ is aspherical. Moreover, $S\setminus \lbrace \text{finitely many points} \rbrace$ is again the interior of a connected surface-with-boundary, so is also aspherical by the previous argument.

Via the fibration sequences
\begin{equation*}
S_{n-1} \times BG \longrightarrow \widetilde{C}_n (S,BG) \longrightarrow \widetilde{C}_{n-1} (S,BG)
\end{equation*}
and induction on $n$, this implies that $\widetilde{C}_n (S,BG)$ is aspherical for all $n$. This is a covering space of $C_n^+ (S,BG)$, so $C_n^+ (S,BG)$ is also aspherical for all $n$.

Now we check that $\pi_1 C_n^+ (S,BG) \cong G\wr A\beta_n^S$. Forgetting the labels gives a fibration
\begin{equation*}
(BG)^n \hookrightarrow C_n^+ (S,BG) \xrightarrow{\text{\footnotesize\itshape forget}} C_n^+ (S,pt),
\end{equation*}
which admits a section. So on $\pi_1$ this induces a split short exact sequence
\begin{equation*}
1\to G^n \hookrightarrow G^n \rtimes A\beta_n^S \twoheadrightarrow A\beta_n^S \to 1.
\end{equation*}
It remains to show that this is the \emph{same} semi-direct product as $G\wr A\beta_n^S$, \eqref{eqn:wreath:product}. This can be seen most easily by just thinking about what concatenation of based loops in $C_n^+ (S,BG)$ does under this identification: it concatenates the corresponding braids, and multiplies the elements of $G$ in pairs, according to which strands have been glued together. So the multiplication in the $G^n$ component is twisted by the induced permutation coming from the $A\beta_n^S$ component.
\end{proof}
\begin{eg}
A special case of the first half of Corollary \ref{coro:corollary:A}, taking $S=\bbR^2$ and $G=*$, is homological stability for the \emph{alternating Artin braid groups}, an index-$2$ subfamily of the sequence of Artin braid groups. Another special case, taking $S=\bbR^2$ and $G=\bbZ$, is homological stability for the sequence of \emph{alternating ribbon braid groups}.
\end{eg}
\begin{rmk}
The elements of $G\wr A\beta_n^S$ can be thought of as braids embedded in $S\times I$, with an element of $G$ `attached' to each strand. In this description the `natural map' $G\wr A\beta_n^S \to G\wr A\beta_{n+1}^S$ referred to in the statement of Corollary \ref{coro:corollary:A} is given by adding a new strand (with the identity of $G$ attached) near a chosen boundary-component of \sbar.
\end{rmk}

\subsection{Wreath products with alternating groups}

We now want to take configurations in the `manifold' $M=\bbR^\infty$:
\begin{coro}\label{coro:R:infty:version}
For any path-connected space $X$, the map
\begin{equation*}
s\colon C_n^+ (\bbR^\infty ,X) \longrightarrow C_{n+1}^+ (\bbR^\infty ,X)
\end{equation*}
is an isomorphism on homology up to degree $\frac{n-5}{3}$ and surjective up to degree $\frac{n-2}{3}$.
\end{coro}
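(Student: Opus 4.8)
The plan is to reduce Corollary~\ref{coro:R:infty:version} to the Main Theorem by an exhaustion-and-colimit argument, since $\bbR^\infty$ is not a finite-dimensional manifold and the Main Theorem does not apply to it directly.

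First I would fix the standard exhaustion $\bbR^\infty = \bigcup_{d\geq 2} \bbR^d$ by the linear subspaces $\bbR^2 \subseteq \bbR^3 \subseteq \cdots$. For each finite $d\geq 2$ the space $\bbR^d$ is the interior of the manifold-with-boundary $\bbR^{d-1}\times[0,\infty)$, so it admits a boundary and the Main Theorem applies to the sequence $C_n^+(\bbR^d,X)$ for every path-connected $X$. The one point that needs care is to choose, simultaneously for all $d$, the boundary-component $B_0$, the collar, and the homeomorphism $\phi$ entering the definition of the stabilisation map (see \S\ref{subsec:stabilisation:maps}) so that these choices are compatible with the inclusions $\bbR^d\hookrightarrow\bbR^{d+1}$. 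Concretely, one can take $b_0$, the collar, and $\phi$ to be supported inside a fixed copy of $\bbR^2\subseteq\bbR^d$ and independent of $d$; then the squares with horizontal maps $s$ and vertical maps $C_n^+(\bbR^d,X)\hookrightarrow C_n^+(\bbR^{d+1},X)$ commute strictly, giving a commuting ladder of stabilisation maps indexed by $d$.

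Next I would observe that every injection $[n]\hookrightarrow\bbR^\infty$ has image in some finite $\bbR^d$, so $\Emb([n],\bbR^\infty)=\mathrm{colim}_d\,\Emb([n],\bbR^d)$ with the colimit topology; quotienting by the (free) action of $A_n$ and multiplying by $X^n$ commutes with this filtered colimit along closed inclusions, so $C_n^+(\bbR^\infty,X)=\mathrm{colim}_d\,C_n^+(\bbR^d,X)$, a colimit along closed inclusions which are cofibrations. Since singular homology commutes with such filtered colimits, $H_*(C_n^+(\bbR^\infty,X))\cong\mathrm{colim}_d\,H_*(C_n^+(\bbR^d,X))$, naturally in $n$ with respect to the maps $s$.

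Finally I would invoke the Main Theorem once for each $d\geq 2$: in \emph{every} such dimension, $s_*$ is an isomorphism on $H_*$ for $*\leq\tfrac{n-5}{3}$ and a surjection for $*\leq\tfrac{n-2}{3}$, with a range that does not depend on $d$. A filtered colimit of abelian groups is an exact functor, so it preserves isomorphisms and surjections degreewise; hence the colimit map $s_*\colon H_*(C_n^+(\bbR^\infty,X))\to H_*(C_{n+1}^+(\bbR^\infty,X))$ has exactly the same properties, which is the assertion. The main (and essentially only nontrivial) obstacle is the first step — arranging that the stabilisation maps are genuinely compatible across dimensions — after which everything is formal.
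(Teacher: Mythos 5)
Your proposal is correct and follows essentially the same route as the paper: apply the Main Theorem to $C_n^+(\bbR^d,X)$ for each finite $d$, arrange the stabilisation maps into a commuting ladder over the inclusions $\bbR^d\hookrightarrow\bbR^{d+1}$, and pass to the colimit, using that the stability range is independent of $d$ and that isomorphism/surjection properties on homology persist under the colimit. The extra care you take over the compatibility of $b_0$, the collar, and $\phi$ across dimensions is a reasonable elaboration of what the paper leaves implicit.
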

\begin{proof}
By the Main Theorem, the analogous statement is true for
\begin{equation*}
C_n^+ (\bbR^N ,X) \longrightarrow C_{n+1}^+ (\bbR^N ,X)
\end{equation*}
for all $N$. These fit into a commutative ladder of maps
\raisebox{0pt}[0pt][0pt]{%
\tikz[baseline=0.4ex,x=1mm,y=1mm,scale=1,>=stealth']{%
\draw [->,very thin] (0,0)--(0,3.7);
\draw [->,very thin] (4,0)--(4,3.7);
\draw [->,very thin] (0,1)--(4,1);
\draw [->,very thin] (0,2)--(4,2);
\node at (2,2.7) [font=\tiny] {$\cdot$};
\node at (2,3.1) [font=\tiny] {$\cdot$};
\node at (2,3.5) [font=\tiny] {$\cdot$};
\useasboundingbox (4.5,0);
}},
where the vertical maps are induced by the standard inclusions $\bbR^N \hookrightarrow \bbR^{N+1}$, and the map we are interested in is the vertical colimit of this ladder. Injectivity- and surjectivity-on-$H_*$ properties of the horizontal maps are preserved under taking this colimit, so the result follows.
\end{proof}
\begin{rmk}
This corollary depends on having an explicit range for homological stability which is \emph{independent} of the manifold $M$. If we only knew qualitatively that homological stability held for \emph{some} (unknown) range, then we would not have been able to take a direct limit and keep homological stability, as we did in the proof above. (A priori, the stability slope could $\to 0$ as the dimension of $M\to\infty$, for example.)
\end{rmk}
\begin{rmk}
We note that $\inj ([n],\bbR^\infty)$ is contractible, and the action of $A_n$ on it is free, so it is a model for $EA_n$. This means that the oriented configuration space on $\bbR^\infty$ with $X$-labels is a model for the homotopy quotient, or Borel construction:
\begin{equation*}
C_n^+ (\bbR^\infty ,X) \; = \; \inj ([n],\bbR^\infty) \times_{A_n} X^n \; \simeq \; EA_n \times_{A_n} X^n \; = \; X^n /\!\!/ A_n .
\end{equation*}
So by Corollary \ref{coro:R:infty:version} we have homological stability for the sequence
\begin{equation*}
\cdots \to X^n /\!\!/ A_n \to X^{n+1} /\!\!/ A_{n+1} \to \cdots .
\end{equation*}
\end{rmk}
\noindent In the special case $X=BG$, we have the following:
\begin{coro}[Second half of Corollary \ref{coro:corollary:A}]
For any discrete group $G$, the map ${G\wr A_n \to G\wr A_{n+1}}$ is an isomorphism on homology up to degree $\frac{n-5}{3}$ and surjective up to degree $\frac{n-2}{3}$.
\end{coro}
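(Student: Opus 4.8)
The plan is to deduce the statement directly from Corollary \ref{coro:R:infty:version} by specialising the label space to $X = BG$ and identifying the resulting oriented configuration spaces with classifying spaces of wreath products. So the only real work is an identification $C_n^+(\bbR^\infty, BG) \simeq B(G\wr A_n)$ under which the stabilisation map becomes the natural map $G\wr A_n \to G\wr A_{n+1}$.

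First I would recall, as in the remark following Corollary \ref{coro:R:infty:version}, that $\inj([n],\bbR^\infty)$ is contractible and carries a free $A_n$-action, hence is a model for $EA_n$, so that
\[
C_n^+(\bbR^\infty, BG) = \inj([n],\bbR^\infty) \times_{A_n} (BG)^n \simeq EA_n \times_{A_n} (BG)^n .
\]
Then I would identify this homotopy quotient with $B(G\wr A_n)$, via the standard fact that $B(H\rtimes K) \simeq BH /\!\!/ K$ for a semidirect product. Concretely: the fibration $(BG)^n \to C_n^+(\bbR^\infty, BG) \to BA_n$ has aspherical base and fibre, so its total space is aspherical, and its long exact sequence of homotopy groups gives a split short exact sequence $1 \to G^n \to \pi_1 C_n^+(\bbR^\infty, BG) \to A_n \to 1$. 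To see this is the \emph{same} extension as $G\wr A_n$ — i.e.\ that $A_n$ acts on $G^n$ by permuting coordinates — one argues exactly as in the proof of Lemma \ref{lem:classifying:space}: concatenation of based loops multiplies the elements of $G$ coordinatewise, with coordinates permuted according to the underlying loop in $C_n^+(\bbR^\infty, pt)$, and a loop there induces an even permutation of the $n$ points. (Note $\pi_1 C_n^+(\bbR^\infty, pt) = A_n$, since $\inj([n],\bbR^\infty)$ is simply connected — so unlike the surface case there is no braiding to track.)

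Next I would check that, under this identification, the stabilisation map $s$ corresponds to the natural map $G\wr A_n \to G\wr A_{n+1}$: $s$ adds a new point near the chosen end of $\bbR^\infty$ carrying the basepoint label, so on $\pi_1$ it extends an even permutation of $\{1,\dots,n\}$ to the even permutation of $\{1,\dots,n+1\}$ fixing $n+1$ and inserts the identity of $G$ into the new slot, which is precisely the defining inclusion $G\wr A_n \hookrightarrow G\wr A_{n+1}$. Finally, since $H_*(B(G\wr A_n)) = H_*(G\wr A_n)$, Corollary \ref{coro:R:infty:version} with $X = BG$ yields exactly the claimed ranges.

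I expect the only mildly delicate point to be confirming that the extension $1 \to G^n \to \pi_1 \to A_n \to 1$ extracted from the fibration is the \emph{permutation} wreath product and not a twisted variant; but this is handled verbatim by the reasoning already recorded in the proof of Lemma \ref{lem:classifying:space}, and everything else — asphericity, computing $\pi_1$, and recognising $s$ — is formal.
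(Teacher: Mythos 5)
Your proposal is correct and follows essentially the same route as the paper: reduce to Corollary \ref{coro:R:infty:version} with $X=BG$, show $C_n^+(\bbR^\infty,BG)$ is aspherical via the labelled Fadell--Neuwirth fibrations, compute $\pi_1 C_n^+(\bbR^\infty,pt)\cong A_n$, and identify the split extension $1\to G^n\to\pi_1\to A_n\to 1$ as the permutation wreath product by examining loop concatenation, exactly as in Lemma \ref{lem:classifying:space}. The only (harmless) cosmetic difference is that you phrase the identification via the Borel construction $EA_n\times_{A_n}(BG)^n$, which the paper records only as a remark.
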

\noindent Here, the wreath product $G\wr A_n$ is the semi-direct product
\begin{equation}\label{eqn:wreath:product:2}
1\to G^n \hookrightarrow G^n \rtimes A_n \twoheadrightarrow A_n \to 1
\end{equation}
where $A_n$ acts by permuting the $n$ factors of $G^n$.
\begin{proof}
By Corollary \ref{coro:R:infty:version} we just need to show that $C_n^+ (\bbR^\infty ,BG)$ is a model for the classifying space $B(G\wr A_n)$. Now, $\bbR^\infty \setminus \lbrace \text{finitely many points} \rbrace$ is contractible, so by considering the fibration sequences
\begin{equation*}
(\bbR^\infty \setminus \lbrace n-1 \text{ points} \rbrace) \times BG \hookrightarrow \widetilde{C}_n (\bbR^\infty ,BG) \twoheadrightarrow \widetilde{C}_{n-1} (\bbR^\infty ,BG)
\end{equation*}
we can show inductively that $\widetilde{C}_n (\bbR^\infty ,BG)$, and hence also $C_n^+ (\bbR^\infty ,BG)$, is aspherical for all $n$.

To show that $\pi_1 C_n^+ (\bbR^\infty ,BG) \cong G\wr A_n$, we first consider $\pi_1 C_n^+ (\bbR^\infty ,pt)$. A based loop (up to $\simeq$) in $C_n^+ (\bbR^\infty ,pt)$ is an $n$-strand braid on $\bbR^\infty$. Any braid in $\bbR^\infty$ can be `untangled', so it is just a permutation of the basepoint configuration, which in this case must be \emph{even} to preserve the orientation. So $\pi_1 C_n^+ (\bbR^\infty ,pt) \cong A_n$. As in the proof of Lemma \ref{lem:classifying:space} we have a fibration
\begin{equation*}
(BG)^n \hookrightarrow C_n^+ (\bbR^\infty ,BG) \xrightarrow{\text{\footnotesize\itshape forget}} C_n^+ (\bbR^\infty ,pt),
\end{equation*}
which admits a section, so on $\pi_1$ we have a split short exact sequence
\begin{equation*}
1\to G^n \hookrightarrow G^n \rtimes A_n \twoheadrightarrow A_n \to 1.
\end{equation*}
By considering what concatenation of based loops in $C_n^+ (\bbR^\infty ,BG)$ does under this identification, we can see that the action of $A_n$ on $G^n$ in this semi-direct product is just permutation of the $n$ factors, as in \eqref{eqn:wreath:product:2}. Hence $\pi_1 C_n^+ (\bbR^\infty ,BG) \cong G\wr A_n$.
\end{proof}

\subsection{Aside: the limiting spaces for $A_n$ and $A\beta_n$}

When $S=\bbR^2$ we denote the alternating braid group $A\beta_n^{\bbR^2}$ by just $A\beta_n$.

Corollary \ref{coro:corollary:A} relates the homology of the families of groups $(A_n)$ and $(A\beta_n)$, in the stable range, to the homology of the `limiting spaces' $BA_{\infty}^+$ and $BA\beta_{\infty}^+$, where $G_\infty = \lim_n G_n$ and $(\cdot)^+$ is the Quillen plus-construction. In these two cases we can identify the limiting spaces explicitly: The `scanning' argument of Segal and McDuff implies \cite[Theorem 4.5]{McDuff1975} that
\begin{align}
B\Sigma_{\infty}^+ &\simeq \Omega_0^\infty S^\infty = \calQ_0 S^0 &\text{and}&& B\beta_{\infty}^+ &\simeq \Omega_0^2 S^2 \simeq \Omega^2 S^3 .\label{eqn:lim:space:1}\\
\intertext{%
(The first of these is the Barratt--Priddy--Quillen theorem \cite{Barratt:Priddy1972}.) Plus-constructing preserves double-covering maps (see for example \cite[Theorem 6.4]{Berrick1982}), so%
}
BA_{\infty}^+ &\simeq \widetilde{\calQ_0 S^0} &\text{and}&& BA\beta_{\infty}^+ &\simeq \widetilde{\Omega^2 S^3} ,\label{eqn:lim:space:2}\\
\intertext{%
the universal cover of $\calQ_0 S^0$ and the unique connected double cover of $\Omega^2 S^3$. Let $\Cob_n$ denote the category of $(n-1)$-dimensional manifolds and $n$-dimensional cobordisms between them (embedded in $\bbR^\infty$), as defined and studied in \cite{GMTW2009}, and let $\Cob_n (\bbR^2)$ denote the version with embeddings into $\bbR^2$. In this language \eqref{eqn:lim:space:1} can be reinterpreted (by the group-completion theorem) as%
}
\Omega B\Cob_0 &\simeq \calQ S^0 &\text{and}&& \Omega B\Cob_0 (\bbR^2) &\simeq \Omega^2 S^2 .\label{eqn:lim:space:3}\\
\intertext{%
Now if $\Cob_0^+$, $\Cob_0^+ (\bbR^2)$ denote the corresponding (embedded) cobordism categories where $0$-manifolds have an ordering-up-to-even-permutations (this is a non-tangential, i.e.\ `global', structure), then by the group-completion theorem \eqref{eqn:lim:space:2} becomes%
}
\Omega B\Cob_0^+ &\simeq \widetilde{\calQ S^0} &\text{and}&& \Omega B\Cob_0^+ (\bbR^2) &\simeq \widetilde{\Omega^2 S^2} ,\label{eqn:lim:space:4}
\end{align}
where we are taking double covers componentwise.

So in a very special case, and up to delooping once, this identifies the homotopy type of a cobordism category of manifolds with some kind of \emph{non-local} structure.

\section{Failure of injectivity}\label{sec:failure:of:injectivity}

In this section we elaborate on one way in which the oriented case is harder to deal with than the unordered case: the failure of the stabilisation maps to be injective on homology in general. In \S \ref{subsec:inj:of:s} we recall how injectivity-on-homology can be proved in the unordered case, and in \S \ref{subsec:failure:of:injectivity} explain why the analogous argument breaks down in the oriented case. Then in \S \ref{subsec:counterexamples:to:injectivity} we give some explicit examples demonstrating non-injectivity of $s_* \colon H_* C_n^+ (M,X) \longrightarrow H_* C_{n+1}^+ (M,X)$.

\subsection{Injectivity in the unordered case}\label{subsec:inj:of:s}

The stabilisation maps $s$ are split-injective on homology in all degrees in the case of unordered configuration spaces. This can be shown with the help of the following lemma proved by Dold (and used earlier by Nakaoka in \cite{Nakaoka1960}):
\begin{lem}[Lemma 2 of \cite{Dold1962}]\label{lem:inj:of:s}
Given a sequence of abelian groups and homomorphisms $0\to A_1 \xrightarrow{s_1} A_2 \xrightarrow{s_2} \cdots$, if there are `transfer' maps $\tau_{k,n} \colon A_n \to A_k$ $(1\leq k\leq n)$ satisfying
\begin{align*}
\tau_{n,n} &= \id &\text{and}&& \tau_{k,n} &= \tau_{k,n+1} \circ s_n \quad \mathrm{mod\; im} (s_{k-1}),
\end{align*}
then every $s_n$ is split-injective.
\end{lem}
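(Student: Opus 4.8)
The statement is Dold's classical lemma, so the plan is to reconstruct its short argument. The key point is that the hypotheses force each $s_n$ to be \emph{split} injective, and the splitting is built inductively from the transfer maps $\tau_{k,n}$. First I would observe that injectivity follows once we produce, for each $n$, a retraction $\rho_n \colon A_{n+1} \to A_n$ with $\rho_n \circ s_n = \id_{A_n}$. The natural first guess is $\rho_n = \tau_{n,n+1}$, but this need not be a retraction on the nose --- the compatibility relation only gives $\tau_{n,n+1} \circ s_n = \tau_{n,n} = \id$ modulo $\mathrm{im}(s_{n-1})$, i.e.\ there is an error term lying in the image of the previous stabilisation map. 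So the construction must correct for this error, which is exactly where the inductive bookkeeping enters.

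\textbf{Key steps.} I would argue by induction on $n$, proving simultaneously that $s_n$ is split injective and choosing a compatible system of splittings. For the base case, $s_1 \colon A_1 \to A_2$ is injective by hypothesis (the sequence starts $0 \to A_1 \xrightarrow{s_1} A_2$), and we may take any set-theoretic splitting refined to a homomorphic one, or more cleanly start the induction at the point where $\tau$ first applies. For the inductive step, suppose we have already constructed a retraction $\rho_{n-1} \colon A_n \to A_{n-1}$ with $\rho_{n-1} s_{n-1} = \id$. Consider the candidate $\tau_{n,n+1} \colon A_{n+1} \to A_n$. By the compatibility hypothesis, $\tau_{n,n+1} \circ s_n - \id = \tau_{n,n+1} s_n - \tau_{n,n}$ takes values in $\mathrm{im}(s_{n-1})$; write this difference as $s_{n-1} \circ \delta$ for a homomorphism $\delta \colon A_n \to A_{n-1}$ (using that $s_{n-1}$ is injective, so $\delta$ is well-defined, and it is a homomorphism because $\tau_{n,n+1}s_n - \id$ is). Then define the corrected retraction
\begin{equation*}
\rho_n \;\coloneqq\; \tau_{n,n+1} \;-\; s_{n-1} \circ \delta \circ \tau_{n,n+1} \,\circ\, ?
\end{equation*}
--- more precisely, one checks that $\rho_n \coloneqq \tau_{n,n+1} - s_{n-1}\,\delta$ need not work directly since $\delta$ is only defined on $A_n$; the right move is to iterate, setting $\rho_n \coloneqq (\id - s_{n-1}\delta + \cdots)\circ\tau_{n,n+1}$, or equivalently to use that $\id + s_{n-1}\delta$ differs from $\id$ by a nilpotent-in-the-filtration correction. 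I would verify directly that the resulting $\rho_n$ satisfies $\rho_n \circ s_n = \id_{A_n}$ by substituting and using $\rho_{n-1} s_{n-1} = \id$ to absorb the error term. Hence $s_n$ is split injective, completing the induction.

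\textbf{Main obstacle.} The genuine subtlety --- and the step I would be most careful with --- is constructing the correction term so that it is a \emph{homomorphism} and so that the corrected map is honestly a left inverse to $s_n$, not merely a left inverse modulo some smaller image. The relation $\tau_{k,n} = \tau_{k,n+1}\circ s_n \bmod \mathrm{im}(s_{k-1})$ is a statement "modulo image", and turning it into an exact identity requires choosing a homomorphic lift of the discrepancy through the injection $s_{k-1}$ and then feeding it into the inductive splitting $\rho_{k-1}$ already in hand. The cleanest formulation is: define $\rho_n \coloneqq \tau_{n,n+1} + s_{n-1}\circ\rho_{n-1}\circ(\text{correction})$; the point is that composing the error term (which lands in $\mathrm{im}(s_{n-1}) = \mathrm{im}$ of a \emph{split} injection by the inductive hypothesis) with the already-constructed retraction $\rho_{n-1}$ lets one cancel it exactly. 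Everything else is routine diagram-chasing; once the base case and the inductive construction of $(\rho_n)$ are set up correctly, injectivity of each $s_n$ is immediate.
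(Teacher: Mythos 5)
The paper gives no proof of this lemma --- it is quoted verbatim from Dold --- so your reconstruction has to be measured against Dold's actual argument, and it does not go through. The fatal problem is that your induction discards almost all of the hypothesis: at the $n$-th step you use only the one-step transfer $\tau_{n,n+1}$ together with the splitting of $s_{n-1}$, and that data is genuinely insufficient. Concretely, take $A_{n-1}=A_n=\bbZ$, $s_{n-1}=\id$ (so $\rho_{n-1}=\id$), $A_{n+1}=\bbZ/2$, $s_n$ the reduction map, and $\tau_{n,n+1}=0$; then $\tau_{n,n+1}\circ s_n-\id=s_{n-1}\circ(-\id)$, so every input to your inductive step is present, yet $s_n$ is not even injective. (The full hypotheses of the lemma rule this out, but only via the \emph{two}-steps-back transfer $\tau_{n-1,n+1}$, whose compatibility is taken modulo the smaller subgroup $\mathrm{im}(s_{n-2})$.) This is also why the formulas you write down cannot be completed: the correction you need is a homomorphism $\alpha\colon A_{n+1}\to A_{n-1}$ with $\alpha\circ s_n=\delta$, i.e.\ an extension of $\delta$ along $s_n$ --- which is circular --- and the alternative Neumann-series guess $(\id-s_{n-1}\delta+\cdots)\circ\tau_{n,n+1}$ is an infinite sum with no reason to converge, since $\id+s_{n-1}\delta$ is not invertible in general (the literal ``$?$'' in your displayed formula is where the argument actually stops). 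Note that the paper's Corollary \ref{coro:inj:of:s:Q} is itself evidence against your strategy: over $\bbQ$ one-step transfers do suffice, but only because one can divide by $(n-k)!$ to manufacture the full system $\tau_{k,n}$ and then invoke the lemma --- integrally this is unavailable.

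The correct argument uses all the transfers $\tau_{k,n}$, $1\leq k\leq n$, \emph{simultaneously}. Set $C_k=\mathrm{coker}(s_{k-1})$ with quotient map $q_k$, and let $\bar{\tau}_{k,n}=q_k\circ\tau_{k,n}$; the hypothesis says exactly that $\bar{\tau}_{k,n}=\bar{\tau}_{k,n+1}\circ s_n$. One then shows by induction that
\begin{equation*}
\Phi_n=(\bar{\tau}_{1,n},\ldots,\bar{\tau}_{n,n})\colon A_n\longrightarrow\bigoplus_{k=1}^{n}C_k
\end{equation*}
is an isomorphism: the square comparing $\Phi_n$, $\Phi_{n+1}$, $s_n$ and the inclusion $\iota$ of the first $n$ summands commutes (the new component $\bar{\tau}_{n+1,n+1}\circ s_n=q_{n+1}\circ s_n$ vanishes), which gives the retraction $\Phi_n^{-1}\circ\pi\circ\Phi_{n+1}$ of $s_n$ immediately, and injectivity and surjectivity of $\Phi_{n+1}$ follow from a short diagram chase using exactness of $A_n\to A_{n+1}\to C_{n+1}\to 0$. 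Your write-up correctly identifies that the relation ``modulo $\mathrm{im}(s_{k-1})$'' is the crux, but the resolution is to pass to the cokernels where that relation becomes an exact identity, not to lift the discrepancy back through $s_{n-1}$.
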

\noindent If the abelian groups are in fact $\bbQ$-vector spaces, then it suffices to find transfer maps going just one step back:
\begin{coro}\label{coro:inj:of:s:Q}
Given a sequence of $\bbQ$-vector spaces $0\to A_1 \xrightarrow{s_1} A_2 \xrightarrow{s_2} \cdots$, if there are `transfer' maps $t_n \colon A_n \to A_{n-1}$ $(n\geq 1)$ satisfying
\begin{equation*}
t_{n+1} \circ s_n = \id + s_{n-1} \circ t_n ,
\end{equation*}
then every $s_n$ is split-injective.
\end{coro}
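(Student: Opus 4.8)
The plan is to deduce Corollary~\ref{coro:inj:of:s:Q} from Lemma~\ref{lem:inj:of:s} by cooking up the required family of transfer maps $\tau_{k,n}$ out of the one-step maps $t_n$. Working over $\bbQ$, the natural guess is to ``normalise'' the composites $t_{k+1}\circ t_{k+2}\circ\cdots\circ t_n$ by a combinatorial factor, so that the telescoping identity $t_{n+1}\circ s_n = \id + s_{n-1}\circ t_n$ collapses modulo $\mathrm{im}(s_{k-1})$ to the condition required by Lemma~\ref{lem:inj:of:s}. The first step is therefore to define $\tau_{k,n}\colon A_n\to A_k$ by an explicit formula of the shape $\tau_{k,n} = c_{k,n}\, t_{k+1}t_{k+2}\cdots t_n$ for suitable rational constants $c_{k,n}$ with $c_{n,n}=1$ (so that $\tau_{n,n}=\id$, matching the first hypothesis of Lemma~\ref{lem:inj:of:s}).

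The second, main step is to verify the congruence $\tau_{k,n}\equiv \tau_{k,n+1}\circ s_n \pmod{\mathrm{im}(s_{k-1})}$. Here one unwinds $\tau_{k,n+1}\circ s_n = c_{k,n+1}\, t_{k+1}\cdots t_{n+1}\circ s_n$ and substitutes the given relation $t_{n+1}\circ s_n = \id + s_{n-1}\circ t_n$ in the rightmost slot, then pushes the factor $s_{n-1}$ leftward through the remaining $t$'s using the same relation repeatedly (each time picking up a further $\id$ term plus an $s$-term one index lower), until the $s$-factor has index $k-1$ and can be absorbed into $\mathrm{im}(s_{k-1})$ and discarded. What survives is $c_{k,n+1}$ times a sum of shorter composites $t_{k+1}\cdots\widehat{t_j}\cdots t_{n+1}$ with appropriate coefficients; imposing that this equals $\tau_{k,n}=c_{k,n}\,t_{k+1}\cdots t_n$ forces a linear recursion on the $c_{k,n}$, which over $\bbQ$ always has a solution (one expects something like $c_{k,n} = 1/(n-k)!$ or a binomial-type coefficient). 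I would fix the constants by solving this recursion explicitly, then read the verification backwards.

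The third step is bookkeeping: confirm that with these $\tau_{k,n}$ both hypotheses of Lemma~\ref{lem:inj:of:s} hold for all $1\le k\le n$, and conclude that every $s_n$ is split-injective. The only place that genuinely uses the $\bbQ$-coefficient hypothesis is the division by the combinatorial factors $c_{k,n}$; everything else is formal manipulation in the category of abelian groups.

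The step I expect to be the main obstacle is pinning down the correct normalising constants so that the ``push $s$ to the left'' computation telescopes exactly — i.e.\ identifying the recursion on $c_{k,n}$ and checking it is consistent (non-degenerate) so that a rational solution with $c_{n,n}=1$ exists. Once the right constants are in hand the congruence is a routine induction on $n-k$, so I would state the formula for $c_{k,n}$ up front and then verify it, rather than trying to discover it inside the proof.
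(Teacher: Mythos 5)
Your proposal is correct and is essentially identical to the paper's proof: the paper defines $\tau_{k,n} \coloneqq \frac{1}{(n-k)!}\, t_{k+1}\circ\cdots\circ t_n$, checks the resulting identity $\tau_{k,n+1}\circ s_n = \tau_{k,n} + s_{k-1}\circ\tau_{k-1,n}$ by exactly the ``push $s$ leftward'' telescoping you describe, and applies Lemma \ref{lem:inj:of:s}. You even guessed the correct normalising constant $c_{k,n}=1/(n-k)!$ (the only small imprecision is that each application of the relation reproduces the \emph{same} composite $t_{k+1}\cdots t_n$ rather than a sum of distinct shorter composites, which is why the count $(n+1-k)$ cancels the extra factorial factor).
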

\begin{proof}
Define $\tau_{k,n} \coloneqq \frac{1}{(n-k)!}\, t_{k+1} \circ \cdots \circ t_n$ for $1 \leq k < n$, so that $\tau_{k,n+1} \circ s_n = \tau_{k,n} + s_{k-1} \circ \tau_{k-1,n}$, and apply Lemma \ref{lem:inj:of:s}.
\end{proof}
\noindent Lemma \ref{lem:inj:of:s} can be applied to prove injectivity of $s_*$ in the unordered case by defining
\begin{equation*}
\tau_{k,n} \colon SP^\infty C_n (M,X) \longrightarrow SP^\infty C_k (M,X)
\end{equation*}
to take an $n$-point configuration in $M$ to the sum of its $\tvect{n}{k}$ different $k$-point subsets (c.f.\ proof of Theorem 4.5 in \cite{McDuff1975}). This uses the Dold-Thom theorem: $\pi_* SP^\infty \cong H_*$ for $*\geq 1$ \cite{Dold:Thom1958}.

\subsection{Failure of injectivity in the oriented case}\label{subsec:failure:of:injectivity}

This trick doesn't work for \emph{oriented} configuration spaces, however, since there is no way for an oriented $n$-point configuration to induce an orientation on a $k$-point subset unless $k=n-1$. If we instead define $\tau_{k,n}$ to take an oriented $n$-point configuration to the sum of all its oriented $k$-point subsets --- with \emph{either} orientation --- then $\tau_{n,n} = \id + \nu$, so the first hypothesis of Lemma \ref{lem:inj:of:s} is not satisfied.

Alternatively, we could try to just prove injectivity on rational homology using Corollary \ref{coro:inj:of:s:Q}, since this only requires maps removing a \emph{single} configuration point, and in this case there is an induced orientation on the subconfiguration. However, defining
\begin{equation*}
t_n \colon SP^\infty C_n^+ (M,X) \longrightarrow SP^\infty C_{n-1}^+ (M,X)
\end{equation*}
to take an oriented $n$-point configuration to the sum of its $n$ different $(n-1)$-point subsets (with their \emph{induced} orientations) results in equations $t_{n+1} \circ s_n = \id + \nu \circ s_{n-1} \circ t_n$, so the hypothesis of Corollary \ref{coro:inj:of:s:Q} is not quite satisfied.

\subsection{Counterexamples}\label{subsec:counterexamples:to:injectivity}

As mentioned in Remark \ref{rmk:global:data} in the Introduction, the calculations in \cite{Guest:et:al1996} provide counterexamples to injectivity of the maps $s_*$ in the case of oriented configuration spaces. The same examples also serve to show that a stability slope of $\frac13$ is the best possible in the oriented case.

First, though, we mention a much simpler counterexample:
\begin{ceg}
The simplest counterexample to injectivity of $s_*$ is the map $H_1 (C_4^+ (\bbR^\infty ,pt)) \to H_1 (C_5^+ (\bbR^\infty ,pt))$, which is $H_1 A_4 \to H_1 A_5$, which is $\bbZ / 3 \to 0$. This is the colimit of the maps $s_* \colon H_1 (C_4^+ (\bbR^k ,pt)) \to H_1 (C_5^+ (\bbR^k ,pt))$ as $k\to\infty$, and injectivity is \emph{preserved} by taking such a colimit, so this provides counterexamples: $s_*$ must be non-injective for infinitely many values of $k$.
\end{ceg}

\subsubsection*{The \textup{\cite{Guest:et:al1996}} calculations.}

For an odd prime $p$, there is a splitting
\begin{equation*}
H_q (C_n^+ (M,X);\bbF_p) \cong H_q (C_n (M,X);\bbF_p) \oplus H_q (C_n (M,X);\bbF_p^{(-1)}),
\end{equation*}
where on the right summand, $\pi_1 C_n^+ (M,X) \leq \pi_1 C_n (M,X)$ acts on $\bbF_p$ by the identity, and its complement acts by multiplication by $-1$. Correspondingly, the stabilisation map $s_*$ splits into two summands: one is the stabilisation map from the unordered case, which \emph{is} split-injective by \S \ref{subsec:inj:of:s} above, and the other is the map induced by the stabilisation map (from the unordered case) on \emph{twisted} homology:
\begin{equation}\label{eqn:s:on:twisted:homology}
H_q (C_n (M,X);\bbF_p^{(-1)}) \longrightarrow H_q (C_{n+1} (M,X);\bbF_p^{(-1)}).
\end{equation}

The calculations in \cite{Guest:et:al1996} use a result of B\"{o}digheimer-Cohen-Milgram-Taylor \cite[Corollary 8.4]{BCT1989,BCM1993} to write this (under some conditions) in terms of the homology of iterated loopspaces of spheres, and then apply the Snaith splitting theorem \cite{Snaith1974} and knowledge of the structure of $H_* \Omega^2 S^3$ to analyse the result. Going through their calculations one can see that the map \eqref{eqn:s:on:twisted:homology} is the map $\bbF_p \to 0$ for $M$ any connected open surface, $X=pt$, and
\begin{equation*}
(n,q) = (\lambda p+1, \lambda (p-2)) \quad \text{for any } \lambda\geq 1
\end{equation*}
(although they state their result in slightly less generality).

This provides an infinite family of counterexamples to injectivity at each odd prime, and taking $p=3$ also provides counterexamples to demonstrate that $\frac13$ is the best possible stability slope for oriented configuration spaces, as mentioned in Remark \ref{rmk:best:possible:stability:slope} in the Introduction.

\setcounter{section}{0}
\renewcommand{\thesection}{{\scshape Appendix} \Alph{section}}
\section{Proof of the factorisation lemma}\label{appendix:proof:of:the:factorisation:lemma}
\renewcommand{\thesection}{\Alph{section}}

In this appendix we prove the general factorisation lemma which is used in the proof of the Main Theorem in \S \ref{sec:proof:of:the:main:theorem}.
\begin{proof}[Proof of Lemma \ref{lem:factorisation} \textup{(}page \pageref{lem:factorisation}\textup{)}]
We have a square with a given homotopy filling it:
\begin{center}
\begin{tikzpicture}
[x=1.2mm,y=1.2mm,>=stealth']
\node (tl) at (0,10) {$A$};
\node (tr) at (20,10) {$X$};
\node (bl) at (0,0) {$B$};
\node (br) at (20,0) {$Y$};
\draw [->] (tl) to node[above,font=\small]{$i$} (tr);
\draw [->] (bl) to node[below,font=\small]{$j$} (br);
\draw [->] (tl) to node[left,font=\small]{$f$} (bl);
\draw [->] (tr) to node[right,font=\small]{$g$} (br);
\node at (10,5) [font=\small] {$H$};
\end{tikzpicture}
\end{center}
and also know that there exists a \emph{split} homotopy filling the same square:
\begin{center}
\begin{tikzpicture}
[x=1.2mm,y=1.2mm,>=stealth']
\node (tl) at (0,10) {$A$};
\node (tr) at (20,10) {$X$};
\node (bl) at (0,0) {$B$};
\node (br) at (20,0) {$Y$};
\draw [->] (tl) to node[above,font=\small]{$i$} (tr);
\draw [->] (bl) to node[below,font=\small]{$j$} (br);
\draw [->] (tl) to node[left,font=\small]{$f$} (bl);
\draw [->] (tr) to node[right,font=\small]{$g$} (br);
\draw [->] (tr) to node[above,font=\footnotesize]{$d$} (bl);
\node at (5,7) [font=\small] {$F_1$};
\node at (15,3) [font=\small] {$F_2$};
\end{tikzpicture}
\end{center}
We want to find a factorisation of $CH_* \colon \widetilde{H}_* Ci \to \widetilde{H}_* Cj$, so we begin by factorising the map $CH\colon Ci \to Cj$ itself. Schematically, $CH$ looks like
\begin{center}
\begin{tikzpicture}
[scale=1,x=1mm,y=1mm,>=stealth']
\newcommand{\xoffset}{30mm}
\draw (0,0)--(10,0);
\draw (3,0)--(5,7)--(7,0);
\draw (4,3.5)--(6,3.5);
\draw[->] (15,3)--(25,3);
\begin{scope}[xshift=\xoffset]
\draw (0,0)--(10,0);
\draw (3,0)--(5,7)--(7,0);
\end{scope}
\node at (-15,3) {$Ci = X\cup_i CA =$};
\node at (55,3) {$= Y\cup_j CB = Cj$};
\end{tikzpicture}
\end{center}
where the top part of $CA$ is mapped to (all of) $CB$ by $f$, levelwise, the middle section $A\times I$ is mapped to $Y$ by the homotopy $H$, and $X$ is mapped to $Y$ by $g$. We will factorise this as follows:
\begin{equation}\label{eqn:big:diagram}
\centering
\begin{split}
\begin{tikzpicture}
[scale=1,x=1mm,y=1mm,>=stealth']
\newcommand{\xoffset}{23mm}
\node (Ci) at (-20,0) {$Ci$};
\node (CXCA) at (10,0) {$CX\cup_i CA =$};
\node (Cj) at (80,0) {$Cj$};
\node (SA) at (10,-15) {$\Sigma A$};
\node (Y) at (80,-15) {$Y$};
\node (SxACA) at (30,-25) {$(S^1 \times A)\cup CA$};
\node (SxA) at (60,-25) {$S^1 \times A$};
\incl{(Ci)}{(CXCA)}
\draw[->>] (CXCA) to node[left,font=\small]{$\mathrm{collapse}\; CX$} (SA);
\draw[->>] (SA) to (SxACA);
\draw[->,dashed] (SxACA) to node[below=-1pt,font=\small]{$\delta$} (SxA);
\draw[->] (SxA) to node[below right=-2pt and -2pt,font=\small]{$\gamma$} (Y);
\inclusion{right}{$\mathrm{inc}$}{(Y)}{(Cj)}
\draw[->] (SxACA) to node[below right=-2pt and -2pt,font=\small]{$\widetilde{\beta}$} (Cj);
\begin{scope}[xshift=\xoffset]
\draw (0,0)--(10,0)--(5,-5)--cycle;
\draw (3,0)--(5,7)--(7,0);
\draw (4,3.5)--(6,3.5);
\end{scope}
\draw[->] (35,0.5) to[out=10,in=170] node[above=-1pt,font=\small]{$\alpha$} (Cj);
\draw[->] (35,-0.5) to[out=-10,in=190] node[below=-1pt,font=\small]{$\beta$} (Cj);
\node at (56,0) {\rotatebox{-90}{$\simeq$}};
\node at (68,-15) [draw,circle,inner sep=0.5pt,font=\small] {$*$};
\end{tikzpicture}
\end{split}
\end{equation}
This requires some explanation: we will define $\alpha$ so that the map across the top is $CH$ (so $\alpha$ is an extension of $CH$). Then we will homotope $\alpha$ to a map $\beta$ which descends to $\widetilde{\beta} \colon (S^1 \times A)\cup CA \longrightarrow Cj$ when you collapse $CX$ and then glue a small cone at the top of $\Sigma A$ to a small cone at the bottom\footnote{$\Sigma A$ means the \emph{unreduced} suspension here.}. Then we will show that $\widetilde{\beta}$ factors through the square
\tikz{\node[draw,circle,inner sep=0.5pt,font=\small]{$*$};}
as indicated (a dotted arrow denotes a map which is only defined on homology).

The composition $Ci \longrightarrow \Sigma A$ is the map in the Puppe sequence inducing the boundary map in \eqref{eqn:exact:ladder}, so this will prove the first half of the lemma, with $z_H$ induced by the composition
\begin{equation*}
\Sigma A \twoheadrightarrow (S^1 \times A)\cup CA \dashrightarrow S^1 \times A \xrightarrow{\gamma} Y.
\end{equation*}
Firstly, we define $\alpha$ and $\beta$ as follows: Each region is mapped to a part of $Cj = Y\cup_j CB$ by the map or homotopy indicated; $*$ means it is sent to the tip of the cone $CB$; shaded regions have target $Y$, whereas unshaded regions are mapped (levelwise) to $CB$. By temporary abuse of notation, $Cf$ in this diagram means the map $CA \to CB$ which is levelwise $f$; similarly for $Cd$.
\begin{center}
\begin{tikzpicture}
[scale=1.5,x=1mm,y=1mm,>=stealth',font=\small]
\newcommand{\xoffseta}{0mm}
\newcommand{\xoffsetb}{25mm}
\newcommand{\xoffsetc}{50mm}
\node at (-5,1) [font=\normalsize] {$\alpha \coloneqq$};
\node at (68,1) [font=\normalsize] {$\eqqcolon \beta$};
\node at (18,1) [font=\normalsize] {$\simeq$};
\node at (45,1) [font=\normalsize] {$\simeq$};
\begin{scope}[xshift=\xoffseta]
\newcommand{\n}{(5,8)}
\newcommand{\s}{(5,-5)}
\newcommand{\w}{(0,0)}
\newcommand{\e}{(10,0)}
\newcommand{\la}{(3,0)}
\newcommand{\lb}{(4,4)}
\newcommand{\ra}{(7,0)}
\newcommand{\rb}{(6,4)}
\newcommand{\lz}{(2.5,-2.5)}
\newcommand{\rz}{(7.5,-2.5)}
\fill[black!20] \lb--\rb--\ra--\la--cycle;
\fill[black!20] \w--\e--\rz--\lz--cycle;
\draw \w--\e--\s--cycle;
\draw \la--\n--\ra;
\draw \lb--\rb;
\draw \lz--\rz;
\newcommand{\rega}{(8,6)}
\newcommand{\regb}{(8.5,2)}
\newcommand{\regc}{(11,-1.5)}
\newcommand{\regd}{(9,-4)}
\node at \rega {$Cf$};
\node at \regb {$H$};
\node at \regc {$F_2$};
\node at \regd {$Cd$};
\end{scope}
\begin{scope}[xshift=\xoffsetb]
\newcommand{\n}{(5,8)}
\newcommand{\s}{(5,-5)}
\newcommand{\w}{(0,-2)}
\newcommand{\e}{(10,-2)}
\newcommand{\la}{(3,-2)}
\newcommand{\lb}{($ (3,-2) !0.25! (5,8) $)}
\newcommand{\lc}{($ (3,-2) !0.5! (5,8) $)}
\newcommand{\ld}{($ (3,-2) !0.75! (5,8) $)}
\newcommand{\ra}{(7,-2)}
\newcommand{\rb}{($ (7,-2) !0.25! (5,8) $)}
\newcommand{\rc}{($ (7,-2) !0.5! (5,8) $)}
\newcommand{\rd}{($ (7,-2) !0.75! (5,8) $)}
\fill[black!20] \ld--\rd--\rb--\lb--cycle;
\draw \w--\e--\s--cycle;
\draw \la--\n--\ra;
\draw \lb--\rb;
\draw \lc--\rc;
\draw \ld--\rd;
\newcommand{\rega}{(8,6.5)}
\newcommand{\regb}{(8,4.25)}
\newcommand{\regc}{(10,2)}
\newcommand{\regd}{(12,-0.5)}
\newcommand{\rege}{(5,-3.4)}
\node at \rega {$Cf$};
\node at \regb {$H$};
\node at \regc {$F_2 \circ i$};
\node at \regd {$(d\circ i)\! \times\! I$};
\node at \rege {$*$};
\end{scope}
\begin{scope}[xshift=\xoffsetc]
\newcommand{\n}{(5,8)}
\newcommand{\s}{(5,-5)}
\newcommand{\w}{(0,-2)}
\newcommand{\e}{(10,-2)}
\newcommand{\la}{(3,-2)}
\newcommand{\lb}{($ (3,-2) !0.2! (5,8) $)}
\newcommand{\lc}{($ (3,-2) !0.4! (5,8) $)}
\newcommand{\ld}{($ (3,-2) !0.6! (5,8) $)}
\newcommand{\lf}{($ (3,-2) !0.8! (5,8) $)}
\newcommand{\ra}{(7,-2)}
\newcommand{\rb}{($ (7,-2) !0.2! (5,8) $)}
\newcommand{\rc}{($ (7,-2) !0.4! (5,8) $)}
\newcommand{\rd}{($ (7,-2) !0.6! (5,8) $)}
\newcommand{\rf}{($ (7,-2) !0.8! (5,8) $)}
\fill[black!20] \lf--\rf--\rb--\lb--cycle;
\draw \w--\e--\s--cycle;
\draw \la--\n--\ra;
\draw \lb--\rb;
\draw \lc--\rc;
\draw \ld--\rd;
\draw \lf--\rf;
\newcommand{\rega}{(8,7)}
\newcommand{\regb}{(8,5)}
\newcommand{\regc}{(10,3)}
\newcommand{\regd}{(11,1)}
\newcommand{\rege}{(11,-1)}
\newcommand{\regf}{(5,-3.4)}
\node at \rega {$Cf$};
\node at \regb {$H$};
\node at \regc {$F_2 \circ i$};
\node at \regd {$j \circ F_1$};
\node at \rege {$f\! \times\! I$};
\node at \regf {$*$};
\end{scope}
\end{tikzpicture}
\end{center}
Intuitively: the left homotopy `pulls $\alpha$ upwards' to obtain the map pictured in the middle, then the right homotopy gradually morphs the levelwise-$(d\!\circ\! i)$ part of this map into the homotopy $F_1$, and then `stretches' one end of it into levelwise-$f$.

It is clear from its definition that $\beta$ descends to a map $\widetilde{\beta}$ as described above; we define $\gamma$ to be the restriction of $\widetilde{\beta}$ to $S^1 \times A$:
\begin{center}
\begin{tikzpicture}
[scale=1.5,x=1mm,y=1mm,>=stealth',font=\small]
\newcommand{\xoffseta}{0mm}
\newcommand{\xoffsetb}{33mm}
\newcommand{\yoffsetb}{1.25mm}
\node at (-5,5) [font=\normalsize] {$\widetilde{\beta} =$};
\node at (19,5) [font=\normalsize] {;};
\node at (28,5) [font=\normalsize] {$\gamma \coloneqq$};
\begin{scope}[xshift=\xoffseta]
\newcommand{\n}{(5,10)}
\newcommand{\la}{(0,0)}
\newcommand{\lb}{(0,2.5)}
\newcommand{\lc}{(0,5)}
\newcommand{\ld}{(0,7.5)}
\newcommand{\ra}{(10,0)}
\newcommand{\rb}{(10,2.5)}
\newcommand{\rc}{(10,5)}
\newcommand{\rd}{(10,7.5)}
\newcommand{\md}{(5,7.5)}
\newcommand{\ma}{(5,0)}
\fill[black!20] \la--\ld--\rd--\ra--cycle;
\draw \la--\ld--\rd--\ra--cycle;
\draw[->] \la--\ma;
\draw[->] \ld--\md;
\draw \ld--\n--\rd;
\draw \lb--\rb;
\draw \lc--\rc;
\newcommand{\rega}{(12,8.75)}
\newcommand{\regb}{(12,6.25)}
\newcommand{\regc}{(14,3.75)}
\newcommand{\regd}{(14,1.25)}
\node at \rega {$Cf$};
\node at \regb {$H$};
\node at \regc {$F_2 \circ i$};
\node at \regd {$j \circ F_1$};
\end{scope}
\begin{scope}[xshift=\xoffsetb, yshift=\yoffsetb]
\newcommand{\la}{(0,0)}
\newcommand{\lb}{(0,2.5)}
\newcommand{\lc}{(0,5)}
\newcommand{\ld}{(0,7.5)}
\newcommand{\ra}{(10,0)}
\newcommand{\rb}{(10,2.5)}
\newcommand{\rc}{(10,5)}
\newcommand{\rd}{(10,7.5)}
\newcommand{\md}{(5,7.5)}
\newcommand{\ma}{(5,0)}
\fill[black!20] \la--\ld--\rd--\ra--cycle;
\draw \la--\ld--\rd--\ra--cycle;
\draw[->] \la--\ma;
\draw[->] \ld--\md;
\draw \lb--\rb;
\draw \lc--\rc;
\newcommand{\regb}{(12,6.25)}
\newcommand{\regc}{(14,3.75)}
\newcommand{\regd}{(14,1.25)}
\node at \regb {$H$};
\node at \regc {$F_2 \circ i$};
\node at \regd {$j \circ F_1$};
\end{scope}
\end{tikzpicture}
\end{center}
Now we need to construct the map $\delta$: This comes from the split cofibration sequence
\begin{center}
\begin{tikzpicture}
[scale=1,x=1mm,y=1mm,>=stealth']
\node (A) at (0,0) {$A$};
\node (SxA) at (20,0) {$S^1 \times A$};
\node (SxACA) at (52,0) {$(S^1 \times A)\cup CA$.};
\incl{(A)}{(SxA)}
\inclusion{below left}{$\varepsilon$}{(SxA)}{(SxACA)}
\draw[->>] (SxA) to[out=160,in=20] (A.north east);
\draw[->,dashed] (SxACA.north west) to[out=160,in=20] node[above=-1pt,font=\small]{$\delta$} (SxA);
\end{tikzpicture}
\end{center}
We have an \emph{actual} splitting $A \leftarrow S^1 \times A$, which induces a splitting on homology, which implies the existence of a splitting $S^1 \times A \dashleftarrow (S^1 \times A)\cup CA$ on homology.

Since we defined $\gamma$ to be the restriction of $\widetilde{\beta}$, we have $(\mathrm{inc})\circ \gamma = \widetilde{\beta}\circ \varepsilon$. But $\delta$ is a splitting on homology, so $\varepsilon_* \circ \delta = \mathrm{id}$. Hence
\begin{align*}
(\mathrm{inc})_* \circ \gamma_* \circ \delta &= \widetilde{\beta}_* \circ \varepsilon_* \circ \delta \\
&= \widetilde{\beta}_* ,
\end{align*}
so the square
\tikz{\node[draw,circle,inner sep=0.5pt,font=\small]{$*$};}
commutes on homology, as required. This completes the proof of the first half of the lemma.

Now, the map $z_H$ was constructed as the composition
\begin{equation*}
\widetilde{H}_* \Sigma A = \widetilde{H}_{*-1} A \longrightarrow \widetilde{H}_* \bigl( (S^1 \times A)\cup CA \bigr) \xrightarrow{\;\delta\;} \widetilde{H}_* (S^1 \times A) \xrightarrow{\gamma_*} \widetilde{H}_* Y
\end{equation*}
induced by the three maps along the bottom of diagram \eqref{eqn:big:diagram}. As defined above, $\gamma$ is the composition of the homotopy $H$ and the split homotopy $(j\circ F_1) * (F_2 \circ i)$. Hence to prove the second half of the lemma, it just remains to show that the composition of the first two maps is the inclusion coming from the K\"{u}nneth splitting for $\widetilde{H}_* (S^1 \times A)$.

This can be seen as follows: Using the homotopy equivalence $(S^1 \times A)\cup CA \simeq \Sigma A \vee S^1$, the K\"{u}nneth splitting, and the suspension isomorphism we identify:
\begin{align*}
\widetilde{H}_* \Sigma A &= \widetilde{H}_{*-1} A \\
\widetilde{H}_* \bigl( (S^1 \times A)\cup CA \bigr) &= \widetilde{H}_{*-1} A \oplus \widetilde{H}_* S^1 \\
\widetilde{H}_* (S^1 \times A) &= \widetilde{H}_{*-1} A \oplus H_{*-1} (pt) \oplus \widetilde{H}_* A .
\end{align*}

Analysing the map on homology induced by $\varepsilon$ carefully, we see that under this identification it sends $\widetilde{H}_{*-1} A$ to itself by the identity, $H_{*-1} (pt)$ isomorphically to $\widetilde{H}_* S^1$, and $\widetilde{H}_* A$ to $0$. Hence its right-inverse $\delta$ must send $\widetilde{H}_{*-1} A$ to itself by the identity, and $\widetilde{H}_* S^1$ isomorphically to $H_{*-1} (pt)$.

Under the identification $(S^1 \times A)\cup CA \simeq \Sigma A \vee S^1$, the map $\Sigma A \twoheadrightarrow (S^1 \times A)\cup CA$ becomes the inclusion $\Sigma A \hookrightarrow \Sigma A \vee S^1$, so on homology it induces the inclusion of the direct summand $\widetilde{H}_{*-1} A \hookrightarrow \widetilde{H}_{*-1} A \oplus \widetilde{H}_* S^1$.

Hence overall the composition $\widetilde{H}_{*-1} A \longrightarrow \widetilde{H}_* \bigl( (S^1 \times A)\cup CA \bigr) \longrightarrow \widetilde{H}_* (S^1 \times A)$ is the inclusion of the direct summand
\begin{equation*}
\widetilde{H}_{*-1} A \quad \hookrightarrow \quad \widetilde{H}_{*-1} A \oplus H_{*-1} (pt) \oplus \widetilde{H}_* A
\end{equation*}
into the K\"{u}nneth splitting for $\widetilde{H}_* (S^1 \times A)$.
\end{proof}

\renewcommand{\thesection}{{\scshape Appendix} \Alph{section}}
\section{Spectral sequences from $\Delta$-spaces}\label{appendix:spectral:sequences:from:dspaces}
\renewcommand{\thesection}{\Alph{section}}

The aim of this appendix is to prove Proposition \ref{prop:dss} --- the construction of a spectral sequence associated to a map of augmented $\Delta$-spaces. We will work up to this gradually, starting with the spectral sequence associated to a $\Delta$-space, and will use the general construction recalled below.

\subsection{General construction}

Recall the following construction (see for example \cite[chapter 7]{Mosher:Tangora1968}): given a filtration
\begin{equation*}
\varnothing = X_{-1} \subseteq X_0 \subseteq \cdots \quad \cdots \subseteq X_n \subseteq \cdots \quad \subseteq X
\end{equation*}
of a space $X$ such that
\begin{equation}\label{eqn:two:conditions}
\bigcup_{n\geq 0} X_n = X \qquad \text{and} \qquad H_* (X_n, X_{n-1}) = 0 \text{ for } *<n,
\end{equation}
the filtered chain complex $C_* (X)$ induces a first quadrant spectral sequence
\begin{equation*}
E_{s,t}^1 \; \cong \; H_{s+t} (X_s, X_{s-1}) \quad \Rightarrow \quad H_* (X).
\end{equation*}
The first differential in this spectral sequence is the boundary map for the pair $(X_s, X_{s-1})$ composed with the quotient map for the pair $(X_{s-1}, X_{s-2})$.

\subsection{$\Delta$-spaces}

We first describe the construction of the spectral sequence associated to a $\Delta$-space $Y_\bullet$. Filter $X =\; \geomr{Y_\bullet}$ by its skeleta,
\begin{equation*}
X_n = \geomrsk{Y_\bullet}{n}\; = \coprod_{n\geq k\geq 0} Y_k \times \Delta^k \rightquotient \sim .
\end{equation*}
The filtration quotients are $X_n / X_{n-1} \cong (Y_n)_+ \wedge S^n$, and the inclusions $X_{n-1} \hookrightarrow X_n$ are cofibrations, so
\begin{align}\label{eqn:identification:of:E2:page}
\begin{split}
H_{s+t} (X_s, X_{s-1}) \; &\cong \; \widetilde{H}_{s+t} ((Y_s)_+ \wedge S^s) \\
&\cong \; \widetilde{H}_t ((Y_s)_+) \; = \; H_t (Y_s).
\end{split}
\end{align}
This is zero for $t<0$, so \eqref{eqn:two:conditions} is satisfied and we get the spectral sequence
\begin{equation*}
E_{s,t}^1 \; \cong \; H_t (Y_s) \quad \Rightarrow \quad H_* (\geomr{Y_\bullet}).
\end{equation*}
The formula for the boundary map of the pair $(X_s, X_{s-1})$, under the identification \eqref{eqn:identification:of:E2:page}, gives the first differential as the alternating sum of $H_t$ of the face maps $Y_s \to Y_{s-1}$.

\subsection{Augmented $\Delta$-spaces}

For an augmented $\Delta$-space $Y_\bullet$, we filter the mapping cone $X = C(\geomr{Y_\bullet} \to Y_{-1})$ by
\begin{equation*}
X_n = C(\geomrsk{Y_\bullet}{n-1} \to Y_{-1})
\end{equation*}
for $n\geq 1$ and $X_0 = Y_{-1} \sqcup \lbrace \text{tip of cone} \rbrace$. The filtration quotients are now $X_n / X_{n-1} \cong (Y_{n-1})_+ \wedge S^n$ for $n\geq 1$, so similarly to before we have
\begin{equation*}
H_{s+t} (X_s, X_{s-1}) \; \cong \; H_t (Y_{s-1}),
\end{equation*}
except with an extra $\bbZ$-summand when $s=t=0$. Again this satisfies \eqref{eqn:two:conditions}, so we have a spectral sequence converging from this $E^1$ page to $H_* (C(\geomr{Y_\bullet} \to Y_{-1}))$. Removing the extra $\bbZ$-summand from the $E^1$ page turns the limit into the \emph{reduced} homology, so if we also regrade $s \mapsto s+1$ we obtain the spectral sequence
\begin{equation*}
E_{s,t}^1 \; \cong \; H_t (Y_s) \quad \Rightarrow \quad \widetilde{H}_{*+1} (C(\geomr{Y_\bullet} \to Y_{-1})),
\end{equation*}
which lives in $\lbrace s\geq -1, t\geq 0 \rbrace$. Again, $d^1$ is the alternating sum of the maps on $H_t$ induced by the face maps --- in particular, the differential $E_{0,t}^1 \to E_{-1,t}^1$ is $H_t$ of the augmentation map.

\subsection{Basepoints}\label{subsec:basepoints}

Now we will introduce basepoints: Let $Y_\bullet$ be an augmented $\Delta$-object in the category of pointed spaces. The pointed geometric realisation $\geomrp{Y_\bullet}$ is $\coprod_{n\geq 0} Y_n \times \Delta^n$ quotiented out by $\coprod_{n\geq 0} * \times \Delta^n$ and then by the face relations, and again there is an induced map $\geomrp{Y_\bullet}\, \to Y_{-1}$.

Filter $X = C(\geomrp{Y_\bullet} \to Y_{-1})$ by $X_n = C(\geomrpsk{Y_\bullet}{n-1} \to Y_{-1})$ for $n\geq 1$ and $X_0 = Y_{-1}$. The filtration quotients are $X_n / X_{n-1} \cong Y_{n-1} \wedge S^n$ for $n\geq 1$, so
\begin{equation*}
H_{s+t} (X_s, X_{s-1}) \; \cong \; \widetilde{H}_t (Y_{s-1}),
\end{equation*}
except again with an extra $\bbZ$-summand when $s=t=0$. This satisfies \eqref{eqn:two:conditions}, so removing the extra $\bbZ$-summand and regrading as before we get a spectral sequence
\begin{equation*}
E_{s,t}^1 \; \cong \; \widetilde{H}_t (Y_s) \quad \Rightarrow \quad \widetilde{H}_{*+1} (C(\geomrp{Y_\bullet} \to Y_{-1}))
\end{equation*}
in $\lbrace s\geq -1, t\geq 0 \rbrace$.

\subsection{Maps of augmented $\Delta$-spaces}\label{subsec:proof:of:prop:dss}

We can now deduce Proposition \ref{prop:dss} from the last construction above.
\begin{proof}[Proof of Proposition \ref{prop:dss}]
We are given a map of augmented $\Delta$-spaces $Y_\bullet \to Z_\bullet$. Since homotopy colimits commute,
\begin{equation*}
\hocofib (\geomr{Y_\bullet} \to \geomr{Z_\bullet}) \; \simeq \; \geomrp{\hocofib (Y_\bullet \to Z_\bullet)},
\end{equation*}
i.e.\ $C(\geomr{Y_\bullet} \to \geomr{Z_\bullet}) \; \simeq \; \geomrp{C(Y_\bullet \to Z_\bullet)}$, where the \emph{pointed} realisation appears on the right since mapping cones are naturally pointed spaces. The face and augmentation maps of $Y_\bullet$ and $Z_\bullet$ give $C(Y_\bullet \to Z_\bullet)$ the structure of an augmented $\Delta$-object in the category of pointed spaces, so we may apply the construction of \ref{subsec:basepoints} above. This yields a spectral sequence in $\lbrace s\geq -1, t\geq 0 \rbrace$ with
\begin{equation*}
E_{s,t}^1 \; \cong \; \widetilde{H}_t (C(Y_s \to Z_s)),
\end{equation*}
and converging to $\widetilde{H}_{*+1}$ of the mapping cone of
\begin{equation*}
C(\geomr{Y_\bullet} \to \geomr{Z_\bullet}) \simeq \geomrp{C(Y_\bullet \to Z_\bullet)} \longrightarrow C(Y_{-1} \to Z_{-1}),
\end{equation*}
which is the double mapping cone $C^2 (Y_\bullet \to Z_\bullet)$ of the square \eqref{eqn:map:of:aug:dspaces}. The first differential can be identified as in the other constructions above.
\end{proof}

\providecommand{\bysame}{\leavevmode\hbox to3em{\hrulefill}\thinspace}
\providecommand{\MR}{\relax\ifhmode\unskip\space\fi MR }
\providecommand{\MRhref}[2]{\href{http://www.ams.org/mathscinet-getitem?mr=#1}{#2}}
\providecommand{\href}[2]{#2}


\end{document}